\def\proof{\noindent {\bf Proof. }}
\def\P{\mathcal{P}}
\newcommand{\PS}{\mathbb{P}}
\newcommand{\R}{\mathbb{R}}  
\newcommand{\poly}{\mathcal{P}}
\newcommand{\spoly}{\overline{\mathcal{P}}}
\newcommand{\blocks}{\mathcal{B}}
\newcommand{\holes}{\mathcal{H}}
\newcommand{\polyblock}{B_{\mathcal{P}}}
\newcommand{\polyhole}{H_{\mathcal{P}}}
\newcommand{\spolyblock}{B_{\overline{\mathcal{P}}}}
\newcommand{\spolyhole}{H_{\overline{\mathcal{P}}}}
\newcommand{\p}{\boldsymbol{p}} 
\newcommand{\B}{\mathcal{B}} 
\newcommand{\Ho}{\mathcal{H}} 
\newcommand{\Su}{\mathcal{T}} 
\newcommand{\q}{\boldsymbol{q}} 
\newcommand{\as}{\boldsymbol{a}} 
\newcommand{\bs}{\boldsymbol{b}} 
\newcommand{\GM}{G^{M}} 
\newcommand{\Db}{\boldsymbol{D}} 
\newcommand{\Lb}{\boldsymbol{L}} 
\newcommand{\Bb}{\boldsymbol{B}} 
\newcommand{\0}{\boldsymbol{0}} 
\newcommand{\ijkm}{<v_{i},v_{j};F^{k},F^{m}>} 
\newcommand{\jimk}{<v_{j},v_{i};F^{m},F^{k}>} 
\newcommand{\SB}{\boldsymbol{S}} 
\newcommand{\ov}[1]{\overline{#1}}
\newsavebox{\cuadrito}
\sbox{\cuadrito}{\framebox[7pt]{ }}
\newcommand{\qed}{\makebox[8pt]{}\hfill {\usebox{\cuadrito}}\medskip}
\newtheorem{theorem}{THEOREM}[section]
\newtheorem{proposition}[theorem]{PROPOSITION}
\newtheorem{corollary}[theorem]{COROLLARY}
\newtheorem{lemma}[theorem]{LEMMA}
\newtheorem{definition}{Definition}[section]
\newtheorem{conjecture}{CONJECTURE}[section]
\newcounter{example}
  \newenvironment{example}{\refstepcounter{example}
    \subsubsection{Example
     }}{$\qed$ \\}
\newcounter{mycomplaints}
\def\complaint#1{\refstepcounter{mycomplaints}%
\ifhmode%
\unskip%
{\dimen1=\baselineskip \divide\dimen1 by 2 %
\raise\dimen1\llap{\tiny -\themycomplaints-}}\fi%
\marginpar{\tiny [\themycomplaints]: #1}}%
\begin{document}

\title{The Rigidity of Spherical Frameworks: Swapping Blocks and Holes}
\author{
{Wendy Finbow \thanks{Department of Mathematics, St. Mary's University. 
The author was supported by NSERC (Canada) and York University
}}
\\
{Elissa  Ross
\thanks{Department of Mathematics and Statistics, York University. 
The author was supported in part under a grant from  NSERC (Canada).}}
\\
{ Walter Whiteley
\thanks{Department of Mathematics and Statistics, York University.
The author was supported in part by a grant 
from  NSERC (Canada).}}
}
\maketitle

\begin{abstract}  A significant range of geometric structures whose rigidity is explored for both practical and theoretical purposes are formed by modifying generically isostatic triangulated spheres.  In the block and hole structures $(\poly, \p)$, some edges are removed to make holes,  and other edges are added to create rigid sub-structures called blocks. Previous work noted a combinatorial analogy in which blocks and holes played equivalent roles.  In this paper, we connect stresses in such a structure $(\poly, \p)$ to first-order motions in a swapped structure $(\spoly, \p)$, where holes become blocks and blocks become holes.   When the initial structure is geometrically isostatic, this shows that the swapped structure is also geometrically isostatic, giving the strongest possible correspondence.  We use a projective geometric presentation of the statics and the motions, to make the key underlying correspondences transparent.
%
\\	

\noindent
{\bf MSC:} Primary
52C25; 
Secondary:
51N15 
70C20 
70B15 
\\

\noindent
{\bf Key words:}  generic rigidity, static rigidity, infinitesimal rigidity,
projective geometry, spherical structures, duality

\end{abstract}

\section{Introduction } \label{sec:introduction}

The general problem of which graphs can be realized in 3-space as an isostatic
(rigid and independent) bar and joint framework is a major unsolved problem in
rigidity theory \cite{graver,wchapter}.  In the absence of a general characterization,
it becomes significant to investigate certain classes of graphs and to confirm the rigidity
of almost all realizations of the graphs in 3-space (generic rigidity). 

Historically, from the work of Cauchy and Dehn,  \cite{cauchy,dehn},
we know that convex triangulated spheres are isostatic,
and therefore, any generic realization of the corresponding 3-connected planar graphs
with $|E|=3|V|-6$ is also isostatic \cite{gluck}. In a previous paper, \cite{wfsw}, 
two of the authors considered the following process: 
remove some edges of a convex triangulated sphere (creating holes), 
insert the same number of edges to create isostatic subpieces (blocks) in the sphere, and
leave the `untouched' portions as triangulated surfaces.  
The authors verified that under certain conditions this process preserves the generically isostatic (rigid, independent) nature of 
such {\it block and hole polyhedra}.
\begin{figure}
\begin{center}
\includegraphics[width=4.5in]{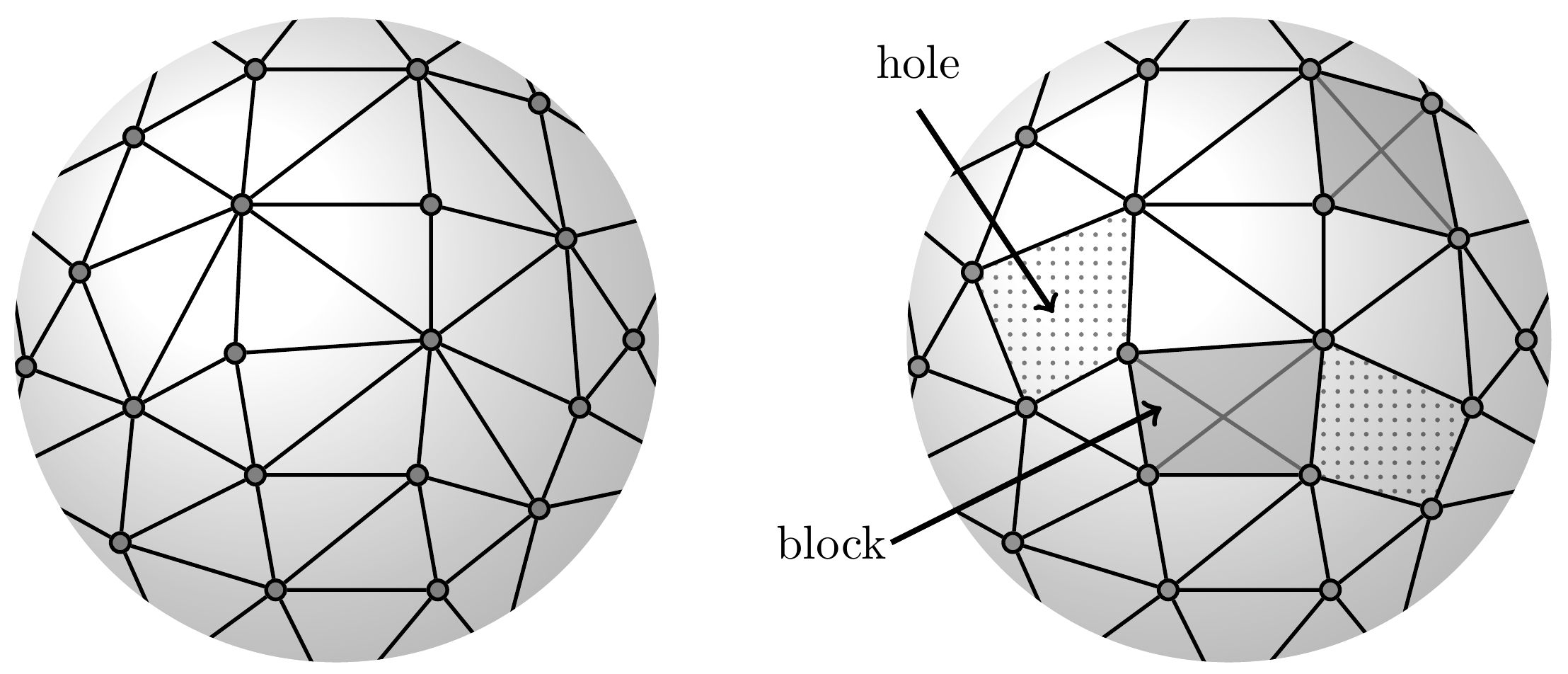}
\caption{A triangulated sphere (left). Removing some edges creates {\it holes} (dotted), and replacing the edges elsewhere creates {\it blocks} (shaded). \label{fig:sphericalPoly}}
\end{center}
\end{figure}

A read through the paper~\cite{wfsw} suggests that the blocks and holes play
`dual' roles.  That is, if we look at the necessary conditions and the conjectures for isostatic block and hole
frameworks in that paper, 
we can swap the faces which were blocks into holes and faces which were holes into blocks. This suggests that if a
block and hole framework is generically isostatic, then the {\it swapped} framework is also generically isostatic.

In this paper, we show a stronger set of geometric results that have these observed combinatorial connections as corollaries.
Let $\poly$ be a block and hole polyhedron, and let $\spoly$ be the swapped polyhedron (blocks and holes interchanged).
Let $G(\poly)$ and $G(\spoly)$ represent the graphs of the polyhedron and swapped polyhedron respectively. Let $\p$ be
an embedding of the graph into $\R^3$, and let $G(\poly, \p)$ and $G(\spoly, \p)$ be the embedded frameworks of 
$\poly$ and $\spoly$ respectively. 
We show that:
\begin{enumerate}
\item  if a block and hole polyhedral framework $G(\poly, \p)$ has a non-trivial first-order motion, then
the swapped block and hole structure $G(\spoly, \p)$ has a static self-stress in the same configuration; and
\item  if a block and hole polyhedral framework $G(\poly, \p)$ has a static self-stress, then
the swapped block and hole structure  $G(\spoly, \p)$ has a non-trivial first-order motion in the same configuration.
\end{enumerate} 

\noindent From these basic results we conclude that a block and hole polyhedron is geometrically isostatic (rigid,
independent) at a given configuration $\p$ if and only if the swapped block and hole polyhedron is also isostatic at $\p$. The generic results also follow: a block and hole polyhedron is generically
isostatic if, and only if the swapped polyhedron is generically isostatic. 

The methods used are an extension of previous geometric work connecting the first-order motions of spherical
polyhedra formed with all faces rigid and edges as hinges, and 
the stresses on a framework, that is, the stresses on the underlying graph of vertices and edges of the polyhedron \cite{crapowhiteley}.  
If the entire underlying polyhedron has only triangular faces, the correspondence is also implicit in earlier work 
on Alexandrov's Theorem \cite{infp1}.   Here we provide an overarching projective geometric analysis that 
gives a general theory extending all of these previous results.

In Section \ref{sec:background} we provide a formal introduction to our object of study, the block and hole polyhedron, and the basic
projective geometric theory of static rigidity and infinitesimal rigidity in space. We also provide a brief introduction to the projective
geometry and the language of the Grassmann-Cayley algebra in which we are presenting the rigidity. 
This projective presentation makes the correspondence much more transparent than an alternate Euclidean 
presentation would be.  It also highlights the advantages of placing infinitesimal and static rigidity 
into projective geometry: both increased simplicity and increased generality.   

In Section \ref{sec:motionsAndStressesSeparated}, we prove the main result for {\it separated} block and hole polyhedra, which is a simplified setting where we assume no pair of holes or blocks share a vertex. In section \ref{sec:gussets} we introduce {\it gussets} to account for remaining cases. 
Finally, in Section \ref{sec:extensions} we outline some extensions and  discuss further
implications of this work.  In particular, it is always of interest to determine 
which configurations make a
generically isostatic graph into a geometrically isostatic framework.  
We review some connections of this work
with prior work on these polynomial pure conditions \cite{wwI}.

\section{Background} \label{sec:background}

We begin by introducing the basic combinatorial object for this study.  This will 
be followed by a general introduction to the projective geometric theory of the
statics of frameworks and the projective geometric theory of the (first-order) motions
of hinge structures.   In the final subsection we will bring these three pieces together 
to give the notation and background for the following sections. 

\subsection{Block and Hole Polyhedra} \label{sec:blockAndHolePolyhedra}

In \cite{wfsw,infp2}, we introduced a construction process for block and hole polyhedra.  This construction permitted us
to extract the generic static rigidity properties of the framework of the block and hole polyhedra from the generic
behaviour of an underlying `base' block and hole polyhedron.  

In this paper, a number of the construction details from \cite{wfsw} and \cite{infp2} will not be relevant.   
Thus, we give a simplified definition of a block and hole polyhedra, starting with the definition of an abstract spherical
polyhedron.

An {\it abstract spherical polyhedron} can be constructed from a spherical drawing 
of a $3$-connected planar graph $G$ (no edges crossings), adding the regions created in the drawing as the `faces' of
the polyhedron.  
This face structure is unique, given $3$-connectivity.   
 
\begin{definition}
{\rm A} block and hole polyhedron {\rm $\poly$ with vertex set $V,$ edge set $E,$ and face set $\mathcal{F}$ is an abstract spherical
polyhedron whose faces $\mathcal{F} = (\B_{\poly}, \Ho_{\poly}, \Su_{\poly})$ are partitioned into three mutually disjoint sets, 
$\B_{\poly}, \Ho_{\poly}$, and $ \Su_{\poly}$. 
The set $\B_{\poly}$ contains the faces designated as blocks and the set
$\Ho_{\poly}$ contains the faces designated as holes. The remaining faces are triangulated on their vertices, 
and the collection of resulting triangular faces forms the set $\Su_{\poly}$.}
\end{definition}
\begin{figure}
\begin{center}
 \subfloat[]{\label{fig:polyhedraA}\includegraphics[width=2in]{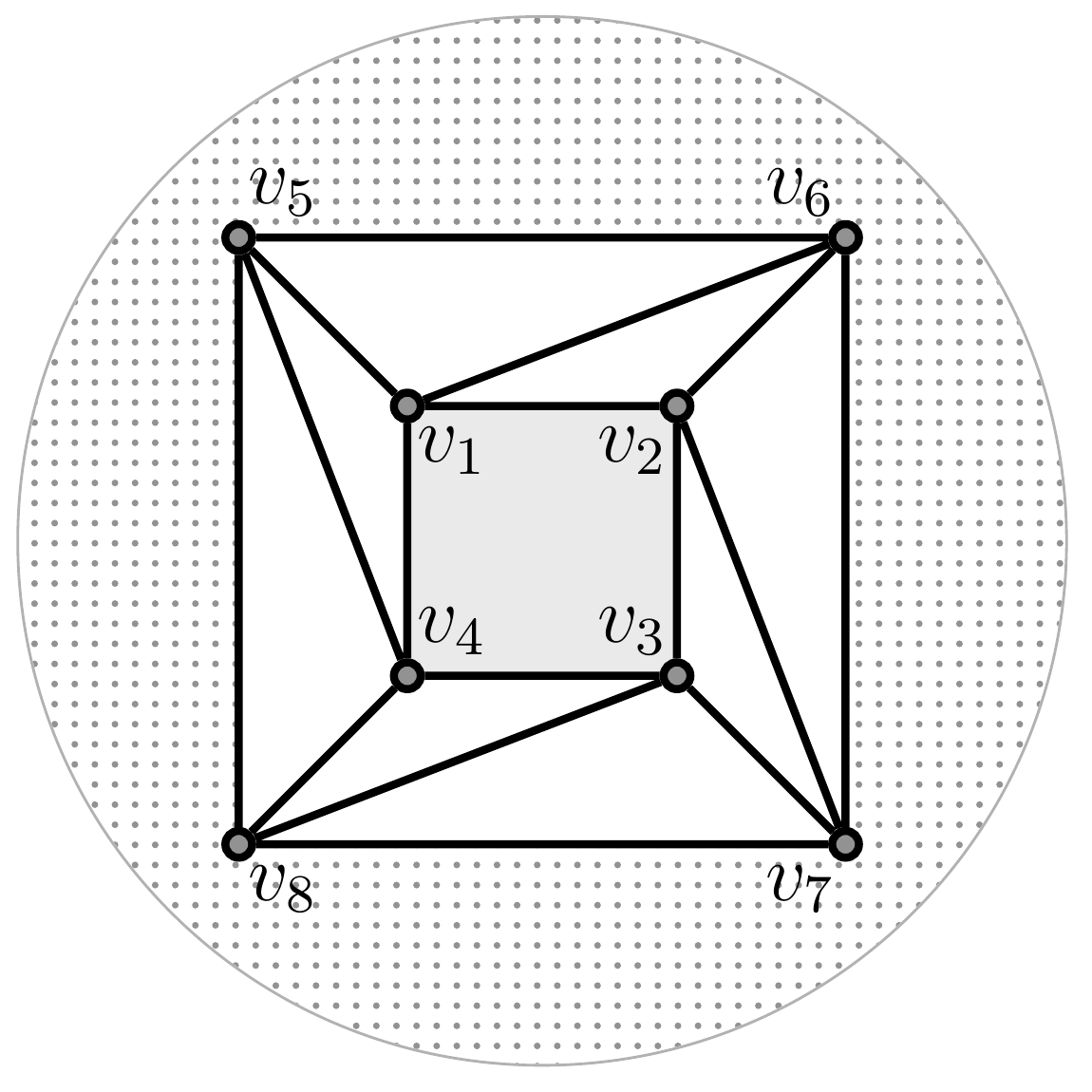}}   \hspace{0.2in}
 \subfloat[]{\label{fig:polyhedraB}\includegraphics[width=2in]{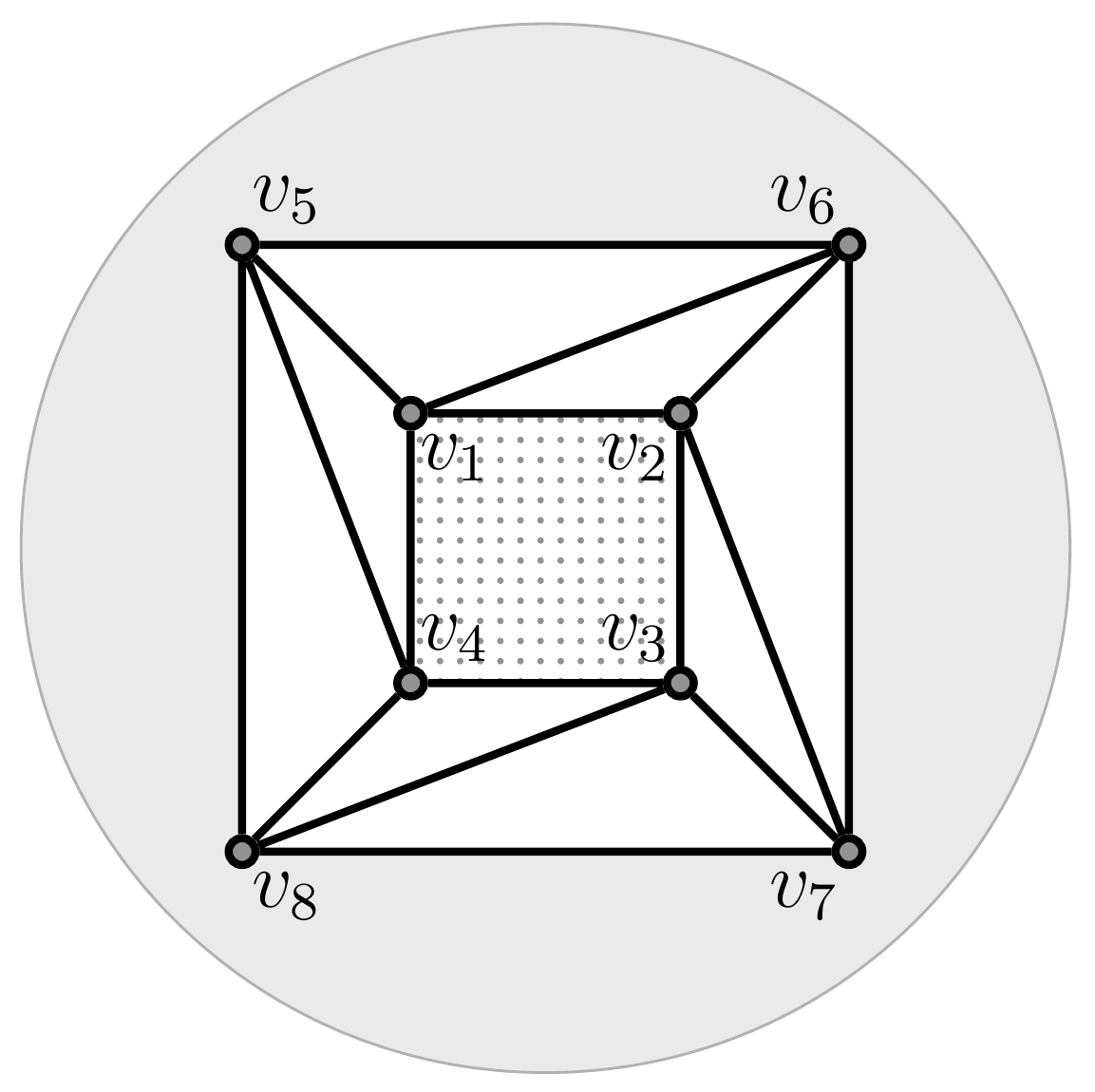}}   
\caption{An example of a block and hole polyhedron $\poly$ shown in (a). Blocks are denoted by grey faces, and holes are denoted by dotted faces. The circular region is understood to be a connected face, in this case a hole face of the polyhedron. The swapped version of this polyhedron, $\spoly$ is shown in (b). \label{fig:polyhedra} }
\end{center}
\end{figure}

In Section \ref{sec:moreGeneralSpheres}, we consider the small modifications needed to include $2$-connected planar graphs.

Central to our analysis of block and hole polyhedra is the process of `swapping', in which holes become
blocks and blocks become holes (See Figure \ref{fig:polyhedra}).

\begin{definition}
{\rm
Given a block and hole polyhedron $\poly$, the} swapped block and hole polyhedron {\rm $\spoly =  (\B_{\spoly},
\Ho_{\spoly}, \Su_{\spoly})$ is the block and hole polyhedron with blocks and holes interchanged; that is, $\B_{\spoly}
= \Ho_{\poly}$, $\Ho_{\spoly} = \B_{\poly}$ and $\Su_{\spoly} = \Su_{\poly}$.}
\end{definition}

It is immediate that $\ov{\spoly} = \poly$.

Recall that each edge $\{i, j\}$ of a block and hole polyhedra $\poly$ joins two distinct vertices $i$ and $j$ of $\poly$, 
and separates two distinct faces, $F^k$ and $F^m$ of $\poly$ (Figure \ref{fig:edgepatch}).
\begin{figure}
\begin{center}
\includegraphics[width=5in]{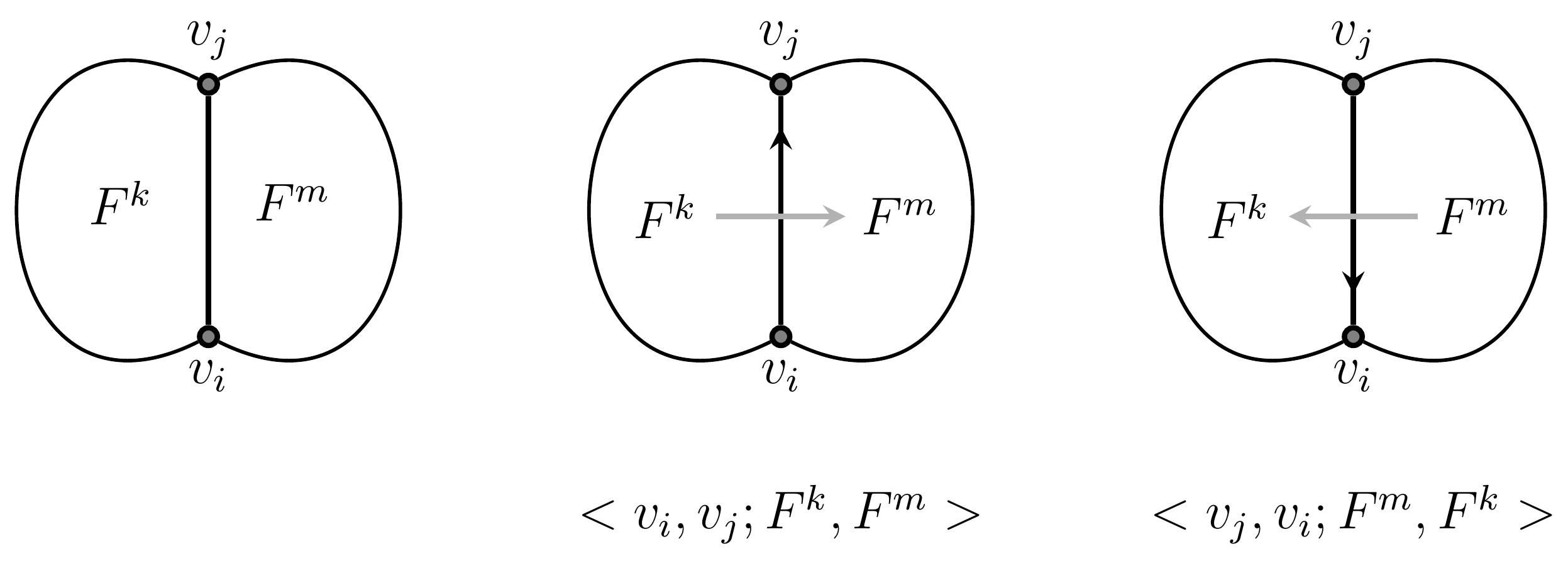} 
\caption{Two vertices joined by an edge and the two faces meeting at an edge form two
oriented edge patches.\label{fig:edgepatch}}
\end{center}
\end{figure}
We introduce a pairing of the vertex-edge-vertex pairs and the face-edge-face pairs obtained from 
the orientation of the spherical surface, $\poly$. 
We write $\ijkm$ for the {\it edge patch}  
with oriented edge from $i$ to $j$ and the oriented pair of faces $(F^k, F^m)$
which crosses the oriented edge after a $90^{\circ}$ counterclockwise turn.  The {\it reversed patch} is
$\jimk$; see Figure \ref{fig:edgepatch}.  
This spherical drawing of the block and hole polyhedra $\poly$ also gives a cycle of faces and edges around each vertex.  
We use such cycles and patches in our later analysis.

\subsection{Stresses of Frameworks} \label{sec:stresses}

In this section, we outline the basic projective theory of static rigidity for frameworks, following \cite{crapowhiteley}. A {\it bar and joint framework},
${\mathcal F} = (G, \p)$ in projective 3-space is a finite graph ${G} = (V, E)$, where $V = \{1, 2, \ldots, v\}$ is a
set of {\it vertices} and $E$ is a set of unordered pairs of vertices called {\it edges}, 
and a mapping $\p: V \mapsto {\PS}^3$ such that if $\{i, j\} \in E$ then $\p(i) \neq \p(j)$. If $i \in V$, we call
$\p(i) = \p_i$ a {\it joint} of ${\mathcal F}$.  
We call $(\p_{i}, \p_{j})$ a {\it bar} of the framework, and when the framework is clear we denote the bar as $(i, j)$ or
$ij$.   
We refer to the mapping $\p$ as a {\it configuration}, to $E$ as the set of bars, and to $V$ as the set of joints of the
framework. 

We are particularly interested in internal stresses that occur on the bars and joints of frameworks. For ease of
notation and simplicity, we do this in a projective setting. For an introduction to Grassmann-Cayley
algebra and projective geometry, see 
\cite{crapowhiteley,whitehandbook,whiteGrassmann}. 
   
Let $\as = (a_1, a_2, a_3, 1)$ and $\bs = (b_1, b_2, b_3, 1)$ be two points in $\PS^3$.    
The line through $\as$ and $\bs$ may be represented by the {\it Grassmann} or 
{\it exterior product}, $\as \vee \bs = \as\bs$ (read a join b),
defined to be the six 2x2 minors
$(d_{14}, d_{24}, d_{34}, d_{23}, d_{31}, d_{12})$ of the 2x4 matrix
$$\left[ \begin{array}  {cccc}
a_1 & a_2 & a_3 & 1 \\
b_1& b_2 & b_3 & 1
\end{array} \right]. $$  
\noindent The exterior product $\as\bs$ is known as both a {\it 2-extensor} and the 
{\it Pl\"ucker coordinates} of the line through $\as$ and $\bs$. 

To understand the rigidity of such a framework, we examine the effect of applying forces (loads) to the structure.  
A force can be thought of as a directed segment of the line though the projective points
${\bf f} = (f_1, f_2, f_3, 0)$ and $\as = (a_1, a_2, a_3, 1)$, and hence the load applied to $\as$ is represented by the
2-extensor $\bf{F} = \bf{f}\as$.

We assume all forces are applied at the joints of the framework.  
An {\it external load} on a framework is an assignment of loads $\Lb = (\Lb_1, \Lb_2, \ldots, \Lb_v)$ to the joints $V =
(\p_1, \p_2, \ldots, \p_v)$ of the framework such that each $\Lb_i$ passes through the joint $\p_i$. The external load is said to be in
{\it equilibrium} if 
\begin{equation}\sum_{i=1}^v \Lb_i = \0\label{eqn:EquilibruimLoads}
\end{equation}
This set of six equations is independent if the points are not collinear, giving a vector space of equilibrium loads of  dimension $3|V|-6$.  

 A {\it resolution} of the equilibrium load by a framework is an assignment
of scalars $\lambda_{ij}$ to the bars of the framework such that for each joint $\p_i \in V$,
\begin{equation}
\Lb_i + \sum_{\{j|(i,j)\in E\}} \lambda_{ij}\p_i\p_j = \0.
\label{eqn:resolutionOfEquilibruimLoads}
\end{equation}

\begin{definition}
{\rm A framework ${\mathcal F} = (G, \p)$ is  }statically rigid {\rm if
every equilibrium load of the framework has a resolution by the bars of the framework. }
\end{definition}
 
The set of equations given in the vector equation 
(\ref{eqn:resolutionOfEquilibruimLoads}) defines a linear transformation from the vector space of
resolutions (by the bars of the framework) to the vector space of equilibrium loads. 

A framework is called {\it isostatic} if its bars 
form a basis for the space of equilibrium loads,
in other words, the framework is called isostatic if it is minimally statically rigid.
It follows that in an isostatic framework, $|E| = 3|V| - 6$ (unless $|V|\leq 2$). 

Throughout the remainder of this paper, we will focus on internal forces acting within the entire framework ${\mathcal F} = (G, \p)$.  We
consider the possible tensions and compressions within the bars of a framework.  
Specifically, we have the {\it bar load }$\Bb_{ij}$  which applies equal and opposite forces at the two ends of a bar:  $\p_i \p_j $ at $p_{i}$ and 
$ \p_j \p_i$ at $p_{j}$ and $0$ at all other vertices.  Together, these two form an equilibrium load, 
and any multiple $\lambda_{ij}\Bb_{ij}$ is a tension ($\lambda_{ij} <0$) or compression ($\lambda_{ij} >0$) in the bar.  
The subspace generated by these internal bar forces  is the space $\mathcal{B}$ of resolved loads.  

We say these internal forces are in {\it equilibrium at each joint $\p_j$} when $\sum_{j} \lambda_{i,j} \p_i \p_j  = \0$ for every joint $\p_i$ such that $(i, j) \in E$.   
A {\it self-stress} on a bar and joint framework ${\mathcal F}$ is an assignment of scalars $\lambda_{ij}$ 
to the bars $(i,j)$ of the framework such that 
for every vertex $i$,
\begin{equation}
\sum_{\{j|(i,j) \in E\}} \lambda_{ij}(\p_i\p_j) = \0.
\label{eqn:selfStressCondition}
\end{equation}
A self-stress is called {\it non-trivial} if some $\lambda_{ij} \neq 0$. For simplicity, we will call a non-trivial
self-stress a {\it stress}.  A framework is called {\it independent} if it has only a trivial self-stress; otherwise, a
framework has a non-trivial self-stress and is called {\it dependent}.

In the language of the bar loads, a self-stress is a linear dependence among the bar loads.   A framework is statically rigid if $\mathcal{B}$ is the entire space of 
equilibrium loads.  A framework is isostatic if $\mathcal{B}$ is a basis for the space of equilibrium loads. 

A {\it cut set} $E'$ of a framework is a subset of the edges of the framework (in other words, $E' \subset E$) whose
removal separates the framework into two or more distinct components. The following theorem gives a useful property of such cut sets.


\begin{theorem}{\rm{(\cite{crapowhiteley})}
Let $E' = \{(\p_i, \q_i)|  k \leq i \leq n \}$ be the cut set of a framework ${\mathcal F} = (G, \p)$.
Then for any stress $\Lambda$ on the framework,
$$ \sum_{i = k}^{n} \lambda_i \p_i \q_i = \0, $$}
where $\lambda_i \in \Lambda$.
\label{thm:cutset}
\end{theorem}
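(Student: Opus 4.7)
The plan is to exploit the antisymmetry of the Grassmann join together with the vertex-equilibrium identity~(\ref{eqn:selfStressCondition}) satisfied by every self-stress. Because $E'$ is a cut set, deleting its edges partitions $V$ into two or more connected components; I fix one such component and call it $C$.

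Then for each vertex $\p_a \in C$ I record the stress equation
$$
\sum_{\{b:(a,b)\in E\}} \lambda_{ab}\,\p_a\p_b \;=\; \0,
$$
and sum over all $\p_a \in C$. Reorganising the resulting double sum by edge, every edge with both endpoints in $C$ contributes the pair $\lambda_{ab}(\p_a\p_b + \p_b\p_a)$, which vanishes by antisymmetry of the join; edges with both endpoints outside $C$ never appear. The edges that survive are precisely those of the cut set, each having exactly one endpoint in $C$. Taking the orientation $(\p_i,\q_i)$ of each cut edge so that $\p_i\in C$, the surviving contribution is $\lambda_i\,\p_i\q_i$, and I obtain $\sum_{i=k}^{n}\lambda_i\,\p_i\q_i = \0$, as required.

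The main step to be careful about is really just bookkeeping rather than geometry. One must sum over a single component and not over all of $V$: summing over every vertex of $G$ pairs each edge with both orientations and collapses the identity trivially to $\0=\0$, losing any cut-set content. One must also commit to a consistent orientation on each cut edge so that the bar extensor $\p_i\q_i$ has a well-defined sign, matched to the scalar $\lambda_i$. Beyond these conventions there is no substantial obstacle — the statement is essentially a discrete divergence-style identity for the statics carried by~$G$, and it follows immediately once the local equilibria are added up and the interior joins telescope.
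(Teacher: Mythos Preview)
Your argument is correct and is the standard proof; the paper itself does not prove this theorem but cites it from \cite{crapowhiteley}, offering only the informal gloss that the cut-set forces form an equilibrium load on one component $C$, which is exactly what your summation establishes.

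One small bookkeeping slip: you explicitly allow the cut to produce more than two components, yet then assert that the surviving edges are ``precisely those of the cut set, each having exactly one endpoint in $C$.'' If there are three or more components, a cut edge joining two components other than $C$ never appears in your sum at all. What your argument actually proves is that the sum over the cut edges \emph{incident to $C$} vanishes (with the stated orientation convention). In the paper's applications the cut always arises from a simple closed cycle on a sphere, giving exactly two sides, so the distinction is immaterial there; but your sentence should be adjusted to match what the computation delivers.
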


Theorem~\ref{thm:cutset} says that the forces from a self-stress acting across the cut-set onto a component $C$ with edges from the cut set removed, 
are an equilibrium load onto this component.   
The overall equilibrium conditions for the larger self-stress guarantee that the coefficients of the self-stress within the 
component $C$ are a resolution of this cut-set load.  

In our later work, we use the following  {\it isostatic substitution principle}, which says, roughly speaking, 
that within a given framework, we can substitute an isostatic subframework attached on a subset of vertices $V'$ for another isostatic subframework attached at $V'$, 
without changing the static rigidity of the overall framework.  
This will follow from general principles about bases, spanning sets, and linear dependencies in vector spaces, translated into the language of  
the particular spaces of interest in statics:  (i) the isostatic frameworks, whose bars are bases for the equilibrium loads; 
(ii) the self-stresses which are linear dependencies among the bars; and (iii) Theorem~\ref{thm:cutset} which converts a self-stress across a cut set into an 
equilibrium load on the component.   A special varient of this principle was used in \cite{infp1}.   

We begin with the simple case where no vertices are added.

\begin{lemma}[Isostatic Substitution Principle]  \label{lem:isostaticSubstitution}  
Given a geometric framework ${\mathcal F} = (G,\p) = ((V,E), \p)$, 
with an isostatic subframework 
${\mathcal F}'= (G',\p')$  on a set of vertices $V' \subset V$, 
then replacing  ${\mathcal F}'= (G',\p|_{V'})$ with another isostatic subframework ${\mathcal F}^{''}= ((V',E''),\p|_{V'})$ on the same vertices  $V'$,  gives a new framework   ${\mathcal F^{*}} = (G^{*}, \p) = ((V,E^{*}), \p) $ which has the same space of resolvable loads $\mathcal{B}$, and an isomorphic space of self-stresses.  
\end{lemma}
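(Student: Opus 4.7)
The plan is to reduce everything to the observation that the bar loads of any isostatic framework on a vertex set $V'$ form a basis of the equilibrium loads supported on $V'$. Let $W_{V'}$ denote the subspace of equilibrium loads $\Lb$ with $\Lb_i = \0$ for every $i \notin V'$; assuming $|V'|\geq 3$ with non-collinear joints, $W_{V'}$ has dimension $3|V'|-6$. Since $\mathcal{F}'$ is isostatic, its $3|V'|-6$ bar loads are linearly independent (no non-trivial self-stress) and are all supported on $V'$, so they form a basis of $W_{V'}$; the same holds for $\mathcal{F}''$. In particular $\mathrm{span}\{\Bb_e : e \in E'\} = W_{V'} = \mathrm{span}\{\Bb_e : e \in E''\}$.

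Writing $E_0 = E \setminus E'$, the space of resolvable loads is then
\begin{equation*}
\mathcal{B} = \mathrm{span}\{\Bb_e : e \in E_0\} + \mathrm{span}\{\Bb_e : e \in E'\} = \mathrm{span}\{\Bb_e : e \in E_0\} + W_{V'},
\end{equation*}
and the very same formula describes $\mathcal{B}^{*}$ for $\mathcal{F}^{*}$, so $\mathcal{B} = \mathcal{B}^{*}$, which is the first claim.

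For the self-stress spaces I would construct an explicit linear isomorphism $\Phi$ from the self-stresses of $\mathcal{F}$ to those of $\mathcal{F}^{*}$. Given a self-stress $\lambda$ on $\mathcal{F}$, the load $L := \sum_{e \in E'} \lambda_e \Bb_e$ lies in $W_{V'}$, and since $\{\Bb_e : e \in E''\}$ is a basis of $W_{V'}$ there are unique coefficients $\mu_e$, $e \in E''$, with $\sum_{e \in E''} \mu_e \Bb_e = L$. Setting $\Phi(\lambda)_e = \lambda_e$ for $e \in E_0$ and $\Phi(\lambda)_e = \mu_e$ for $e \in E''$ yields $\sum_{e \in E^{*}} \Phi(\lambda)_e \Bb_e = \0$, so $\Phi(\lambda)$ is a self-stress on $\mathcal{F}^{*}$. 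Linearity is clear, and swapping the roles of $\mathcal{F}'$ and $\mathcal{F}''$ produces a two-sided inverse, so $\Phi$ is the desired isomorphism.

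There is no substantive obstacle beyond bookkeeping: once $\mathcal{F}'$ and $\mathcal{F}''$ are recognized as two different bases of the same subspace $W_{V'}$, the rest is a change-of-basis argument. Theorem~\ref{thm:cutset} supplies the conceptual underpinning — it is precisely the statement that the internal forces carried by the coefficients $\{\lambda_e\}_{e\in E'}$ aggregate into a legitimate equilibrium load on the attachment vertices $V'$, and therefore can be re-resolved by any other isostatic bar set on those same vertices.
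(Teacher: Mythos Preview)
Your proof is correct and follows essentially the same approach as the paper: both arguments rest on the observation that an isostatic subframework on $V'$ furnishes a basis for the subspace of equilibrium loads supported on $V'$, so swapping one isostatic piece for another is a change of basis leaving the span of all bar loads---hence $\mathcal{B}$---unchanged. The paper's version is terser, inferring the self-stress isomorphism from equality of $\mathcal{B}$ together with equality of bar counts (a dimension argument), whereas you write down the explicit change-of-basis map $\Phi$; your version is more informative but not a different idea. One small remark: your appeal to Theorem~\ref{thm:cutset} at the end is not really needed here, since each $\Bb_e$ with $e\in E'$ is already supported on $V'$ and is itself an equilibrium load, so $L\in W_{V'}$ is immediate without invoking cut sets.
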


\begin{proof}  The core idea is that, working within a vector space of equilibrium loads, isostatic subframeworks are bases for the subspace  
of possible equilibrium loads on the vertices of the subframework.  If ${\mathcal F}'$ and 
${\mathcal F}''$ use the same vertices $V'$,  then it is clear 
that we are replacing one basis for the space of equilibrium loads on $\p|_{V'}$ with another basis for this same subspace of equilibrium loads.  

The larger vector space of resolved equilibrium loads on the entire framework $\mathcal{B}$ has the same 
dimension as $\mathcal{B^{*}}$.  We have the same number of bars (basis are all the same size in a vector space) so we have with equivalent spaces of self-stress 
(dependencies of the bar loads).   
\qed  \end{proof}

We offer an extension of this, in which the substituted subframework may possess more vertices than the attaching set of vertices $V'$. That is, the substituted framework has vertices $V' \cup U$, where $U\cap V = \emptyset$. In particular, if $V'$ is a set of block vertices, then an isostatic framework on these vertices may be substituted with another isostatic framework on the vertices $V' \cup U$, while maintaining an isomorphic space of self-stresses.  

\begin{corollary}[General Isostatic Substitution Principle]  \label{cor:GeneralisostaticSubstitution}  
Given a geometric framework ${\mathcal F} = (G,\p) = ((V,E), \p)$, 
with an isostatic subframework 
${\mathcal F}'= (G',\p')$ on a set of vertices $V' \subset V$, 
then replacing  ${\mathcal F}'= (G',\p|_{V'})$ with another isostatic subframework 
${\mathcal F}^{''}= ((V'\cup U,E''),\p|_{V'}\cup \q$) on the same vertices  $V'$, plus possible additional vertices $U$ in general position $\q$ relative to $\p|_{V'}$,  gives a new framework   ${\mathcal F^{*}} = (G^{*}, \p) = ((V\cup U,E^{*}), \p\cup \q) $ which has an isomorphic space of self-stresses.  
\end{corollary}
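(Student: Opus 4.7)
My plan is to reduce Corollary \ref{cor:GeneralisostaticSubstitution} to Lemma \ref{lem:isostaticSubstitution} by showing that the extra vertices $U$ contribute no genuine freedom to the space of self-stresses: the isostatic nature of $\mathcal{F}''$ on $V'\cup U$ forces the $U$-side to be completely determined by the loads transmitted across the ``interface'' at $V'$. Let $E^{rest}$ denote the edges of $E$ not incident to any edge of $E'$ that was swapped out; equivalently, $E^{rest}$ is the set of edges of $E^{*}$ not in $E''$. The crucial structural observation is that no edge of $E^{rest}$ is incident to any vertex in $U$, so at each $u \in U$ the equilibrium condition for a self-stress of $\mathcal{F}^{*}$ involves only the scalars on $E''$.

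The main step will be to establish the ``interface isomorphism.'' Because $\mathcal{F}''$ is isostatic on $V' \cup U$, its bar loads form a basis for the space of equilibrium loads on $\p|_{V'} \cup \q$. I would define a map $\phi$ from the space of scalar assignments $\lambda_{E''}$ on $E''$ satisfying equilibrium at every $u \in U$ into the space $W$ of equilibrium loads on $\p|_{V'}$, by sending $\lambda_{E''}$ to the net load it applies at the vertices of $V'$. Surjectivity follows because any $\mathcal{L} \in W$ extends by zero to an equilibrium load on $V' \cup U$ (general position of $\q$ ensures the combined configuration is non-degenerate, so the load space has the expected dimension), and this extension is uniquely resolvable by the bars of $\mathcal{F}''$. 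Injectivity follows because $\mathcal{F}''$ is independent, so the only $\lambda_{E''}$ producing the zero load on $V' \cup U$ is the trivial one. Hence $\phi$ is an isomorphism onto $W$.

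With the interface isomorphism in hand, I would then describe the self-stress space of $\mathcal{F}^{*}$ as the set of pairs $(\lambda_{E^{rest}}, \lambda_{E''})$ such that (i) $\lambda_{E^{rest}}$ is in equilibrium at every vertex of $V \setminus V'$, (ii) $\lambda_{E''}$ is in equilibrium at every $u \in U$, and (iii) the loads applied at each $v \in V'$ by the two pieces cancel. By $\phi$, condition (ii) together with (iii) says exactly that the load applied at $V'$ by $\lambda_{E^{rest}}$ lies in $W$ and determines $\lambda_{E''}$ uniquely via $\phi^{-1}$. Performing the identical analysis for $\mathcal{F}$ (which falls under Lemma \ref{lem:isostaticSubstitution}, since $\mathcal{F}'$ is isostatic on $V'$ and its bar loads are themselves a basis for $W$), I obtain the same description of its self-stress space purely in terms of admissible $\lambda_{E^{rest}}$. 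This yields the desired isomorphism.

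The main obstacle, and the point at which the general-position hypothesis on $\q$ is really used, is verifying that $\phi$ is surjective onto the full space $W$ and that the dimensions line up as expected. If $\q$ were collinear with $\p|_{V'}$ in some degenerate way, the equilibrium load space on $V' \cup U$ could have anomalous dimension, and the extension-by-zero argument above could fail to produce the required resolution inside $\mathcal{F}''$. Once this dimensional accounting is in place, the rest of the argument is bookkeeping in linear algebra, parallel to the proof of Lemma \ref{lem:isostaticSubstitution}.
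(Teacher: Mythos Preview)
Your argument is correct, but it takes a different route from the paper's.  The paper reduces to Lemma~\ref{lem:isostaticSubstitution} by a single trick: adjoin the vertices of $U$ to the \emph{original} subframework $\mathcal{F}'$ as $3$-valent vertices in general (non-coplanar) position, obtaining an enlarged isostatic subframework $\widetilde{\mathcal{F}}'$ on $V'\cup U$.  Three-valent addition in general position preserves both independence and the space of self-stresses of the ambient framework, so $\widetilde{\mathcal{F}}$ has self-stress space isomorphic to that of $\mathcal{F}$.  Now $\widetilde{\mathcal{F}}$ and $\mathcal{F}^{*}$ live on the same vertex set $V\cup U$ and each contains an isostatic subframework on $V'\cup U$, so Lemma~\ref{lem:isostaticSubstitution} applies verbatim.

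Your approach instead re-proves the lemma in the enlarged setting by hand, via the interface isomorphism $\phi$: you show that a self-stress of $\mathcal{F}^{*}$ is determined by its restriction $\lambda_{E^{rest}}$, since the load this restriction deposits on $V'$ is automatically an equilibrium load (your observation about cut-set sums) and is then uniquely resolved inside the isostatic $\mathcal{F}''$.  The same description applies to $\mathcal{F}$, giving the isomorphism.  This is longer but more explicit, and in fact your argument needs \emph{only} the hypothesis that $\mathcal{F}''$ is isostatic: once that is assumed, the equilibrium-load space on $V'\cup U$ automatically has the right dimension and contains the extension-by-zero of $W$, so your final paragraph's worry about general position of $\q$ is unnecessary for your own proof.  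By contrast, the paper's $3$-valent-addition reduction genuinely uses the general-position hypothesis to guarantee that the augmented $\widetilde{\mathcal{F}}'$ remains isostatic.
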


\begin{proof}
If we have  added vertices in  $U$ which are not in $V'$, then we can add these vertices to the original framework, at positions $\q$ 
by inserting general position $3$-valent
vertices (non-coplanar) connected within ${\mathcal F}'$ to get a modified framework and subframework with an isomorphic space of  self-stresses \cite{taywhiteley}. 
Having done that to each of the pieces to be compared, we now have returned to 
the case of the previous lemma, and we have, overall, isomorphic spaces of unresolved equilibrium loads and of self-stresses.  
This will have a larger space of resolvable loads -- it increased the dimension by $3|U|$. 
\qed 
\end{proof}

This could be extended one more level - by having some vertices $W$ which are in ${\mathcal F}'$ but not attached to the rest of  ${\mathcal F}$, which are dropped, while new vertices $U$ are added.  This level of generality is not needed here.  

\subsection{Motions of Body and Hinge Structures} \label{sec:motions}

It is traditional to analyze the rigidity of a framework with both the tools of static rigidity, as we have here, and 
tools of infinitesimal rigidity \cite{graver,wchapter,handbook}.   We will not present this kinematic theory for bar and joint frameworks here, as it is not needed.  
   
 An alternative approach  to kinematics rigidity used in this paper, is the theory of infinitesimal motions for bodies and hinges.   
At a basic level, the motion of a framework or structure can be  presented by velocity vectors assigned at all points of the 
 structure in such a way as to not distort the geometric shape of the structure. 
 That is, the velocity assignments must preserve the pairwise distances between points in the same rigid body of that structure. 
 If the only such assignments correspond to rigid motions of the whole structure, 
 then we say that the object is infinitesimally rigid. 

In this section we focus on 
the structures of bodies and hinges we will use in the rest of this paper, using a projective algebra.  A more detailed 
exposition, with examples and motivation, can be found in \cite{crapowhiteley}, and  
the Euclidean basis for this theory is 
described in \cite{graver}. 

A {\it body and hinge} structure $G^*(\Db)$ in $\R^3$ is a graph 
$G^* = (\mathcal{F}, \mathcal{D})$ 
together with a mapping $\Db$ from $\mathcal{D}$ into the $2$-extensors of $\PS^3$. 
We think of the vertices, $F^{k} \in \mathcal{F}$ of $G^*$ as representing rigid bodies, and the edges $e \in \mathcal{D}$ represent hinges. 
If the vertices $F^k \in \mathcal{F}$ and $F^m \in \mathcal{F}$ are joined by an edge $e \in \mathcal{D}$, then we write $e = D^{km}$. 
A body and hinge structure is {\it connected} if its underlying graph $G^*$ is connected. 


A {\it screw centre of motion for a body} (a {\it screw}) is any vector ${\bf S} \in \R^6$. 
This $6$-vector represents a motion of $\PS^3$, in that it encodes all the information for defining the velocities  of any combination of rotations and translations.
We can also interpret $\bf{S}$ as a weighted sum of $2$-extensors in $\PS^3$. 

We note that the edge $D^{km}$ of $G^*$ maps to a $2$-extensor $\Db^{km}$ of $G^*(\Db)$, 
and we write $\Db^{km} = {\bf a} \vee {\bf b}$, for some ${\bf a}, {\bf b} \in \PS^3$. 
Recall that ${\bf a} \vee {\bf b} = - ({\bf b} \vee {\bf a})$, and $\Db^{mk} = {\bf b} \vee {\bf a}$, thus $\Db^{km} = - \Db^{mk}$. 

An {\it infinitesimal motion} of a body and hinge structure is an assignment of a screw centre $\SB^i$ to each 
vertex $F^{k}$ of the graph $G^*$, such that for each edge $D^{km} \in \Db$, 
$$\SB^k - \SB^m = \omega^{km} D^{km}, \ \ \text{for some scalar}\  \omega^{km} \in \R.$$
We think of this equation as an assignment of a center for the rigid motion of each body of the structure, with 
the constraint that points along the hinge receive the same velocity from each of the adjacent centers. 

\begin{definition}
{\rm A body and hinge structure is} infinitesimally rigid {\rm if every infinitesimal motion is a rigid motion.} 
\end{definition}
That is, it is infinitesimally rigid if every infinitesimal motion is trivial, with all vertices (bodies) 
receiving the same screw center assignment, $\SB$. In this case the whole structure moves 
according to the motion encoded by $\SB$. 

There is a more condensed way of describing the motion of a given structure. 
A {\it motion assignment} is an assignment of scalars $\omega^{km}$ to the hinges $\Db^{km}$ of 
the body and hinge structure such that: 
\begin{enumerate}
\item $\omega^{km} = \omega^{mk}$, and 
\item $\sum \omega^{km} \Db^{km} = \0$ for every closed cycle of panels and hinges in the structure. 
\end{enumerate}

The following relationship exists between the infinitesimal motions and the motion assignments of a body and hinge structure:
\begin{proposition} {\rm \cite{crapowhiteley} }For a connected body and hinge structure $G^*(\Db)$ with a 
designated body $F^* \in \mathcal{F}$, there is a one-to-one correspondence between the infinitesimal motions of 
the structure with $\SB^{F^*} = \0$ and the motion assignments on the structure. A motion assignment represents a non-trivial 
motion if, and only if $\omega^{km} \neq 0 $ for some hinge $D^{km}$. 
\end{proposition}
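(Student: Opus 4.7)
The plan is to construct explicit maps in both directions between infinitesimal motions with $\SB^{F^*}=\0$ and motion assignments, and then verify they are mutually inverse, with connectedness of $G^*$ doing the essential work for the reverse direction. For the forward map, given an infinitesimal motion $\{\SB^k\}$ satisfying $\SB^{F^*}=\0$, the defining condition of an infinitesimal motion supplies, for each hinge $D^{km}$, a scalar $\omega^{km}$ with $\SB^k - \SB^m = \omega^{km}\Db^{km}$. These scalars are the candidate motion assignment. The symmetry requirement $\omega^{km}=\omega^{mk}$ drops out by computing $\SB^m-\SB^k = -(\SB^k-\SB^m) = -\omega^{km}\Db^{km} = \omega^{km}\Db^{mk}$ and comparing with the relation produced by the edge oriented as $(m,k)$, using that $\Db^{mk}$ is a nonzero $2$-extensor. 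The cycle condition is then a telescoping identity: around a closed cycle $F^{k_0}, F^{k_1},\ldots, F^{k_n}=F^{k_0}$,
$$\sum_{i=0}^{n-1}\omega^{k_i k_{i+1}}\Db^{k_i k_{i+1}} = \sum_{i=0}^{n-1}\bigl(\SB^{k_i}-\SB^{k_{i+1}}\bigr) = \0.$$

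For the reverse map, given a motion assignment $\{\omega^{km}\}$, set $\SB^{F^*}=\0$ and, for any other body $F^k$, choose a path $F^*=F^{k_0}, F^{k_1},\ldots, F^{k_n}=F^k$ in $G^*$ (which exists by connectedness) and define
$$\SB^k := \sum_{i=0}^{n-1}\omega^{k_i k_{i+1}}\Db^{k_i k_{i+1}}.$$
The essential technical point is that this value is independent of the chosen path. Given two such paths, their concatenation with one reversed is a closed walk from $F^*$ to itself. Reversing a hinge contributes the negative of the original term, since $\Db^{km}=-\Db^{mk}$ while the symmetry $\omega^{km}=\omega^{mk}$ keeps the weight unchanged, so the contributions of the two paths differ by the sum of the walk. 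That sum vanishes by the cycle hypothesis once the walk is decomposed into simple cycles (the retraced edges cancel in pairs). With $\SB^k$ well-defined, the required identity $\SB^k-\SB^m=\omega^{km}\Db^{km}$ follows by taking the path to $F^m$ to be the path to $F^k$ extended by this hinge, so $\{\SB^k\}$ is indeed an infinitesimal motion with $\SB^{F^*}=\0$. Direct substitution shows the two constructions are mutual inverses.

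The non-triviality equivalence then follows quickly. If every $\omega^{km}=0$, the formula gives $\SB^k=\0$ for all $k$, so the motion is the zero (trivial) motion. Conversely, if the motion is trivial, then all $\SB^k$ agree with $\SB^{F^*}=\0$, hence $\omega^{km}\Db^{km}=\0$ on every hinge; since $\Db^{km}$ is a nonzero $2$-extensor representing a line in $\PS^3$, this forces $\omega^{km}=0$. The main obstacle I expect is carrying out the path-independence argument cleanly: it must use the cycle condition and the symmetry condition in tandem, and one has to reduce an arbitrary closed walk to a combination of simple cycles plus retraced edges that cancel pairwise before invoking the cycle hypothesis.
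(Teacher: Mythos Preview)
The paper does not supply its own proof of this proposition: it is quoted from \cite{crapowhiteley} and followed immediately by remarks, with no argument given in the present paper. Your proposal is correct and is precisely the standard argument one expects for such a statement---extract the scalars from the hinge relations, telescope around cycles for the forward direction, and integrate along paths (using connectedness and the cycle condition for well-definedness) in the reverse direction. The one place to be slightly more careful, as you yourself flag, is the reduction of an arbitrary closed walk to simple cycles: the cleanest phrasing is that the assignment $D^{km}\mapsto \omega^{km}\Db^{km}$ extends to a linear map on the oriented edge space which, by hypothesis, vanishes on every simple cycle; since simple cycles span the cycle space and any closed walk lies in it, the sum over the walk vanishes. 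With that said, your argument is complete and matches what the cited reference does.
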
 

\noindent{\bf Remark 1.} Note that we can change the body we consider fixed in the above. 
That is, given an infinitesimal motion of the structure with $\SB_1 = \0$ and $\SB_m \neq \0$, 
we can convert this to an equivalent motion with ${\SB^m}' = \0$ by setting ${\SB_k}' =\SB_k - \SB_m$. 
This new motion will generate the same scalars for the motion assignment, 
but we have a new frame of reference for the space.  In the rest of this paper we will focus on the 
motion assignment as the record of the motion. 
\\

\medskip

\noindent{\bf Remark 2.} We did not introduce the full vocabulary and notation for the projective
analysis of infinitesimal motions of bar and joint frameworks in this section.  
In particular, we have not confirmed directly that the space of infinitesimal motions of a bar and joint framework is unchanged by isostatic substitution. 
Our static analysis, and associated vector spaces does include a space which is isomorphic to the space of non-trivial infinitesimal motions.  
Namely, if we take the orthogonal complement of the space of resolved loads $\mathcal{B}^{\perp}$ within the space of all equilibrium loads, 
we get a space 
$\mathcal{U}= \mathcal{B}^{\perp}$.  
This vector space has the same dimension as the space of non-trivial infinitesimal motions, as is verified in a more complete presentation 
\cite{infp1, handbook, wchapter}.  

We will not use any details of this proxy space below, but it is relevant to recognizing that the Isostatic Substitution Principle, 
Lemma~\ref{lem:isostaticSubstitution}, also shows that the substitution gives an isomorphism of the spaces of unresolved loads, 
or equivalently, an isomorphism of the spaces of non-trivial infinitesimal motions.  
The the proof of Corollary~\ref{cor:GeneralisostaticSubstitution} also shows that such a substitution produces an an isomorphism of the 
spaces of unresolved loads, or equivalently, an isomorphism of the spaces of non-trivial infinitesimal motions.

Of course, our body-hinge structures do not depend on what is used to build the body, as long as each body in the structure is statically 
(infinitesimally) rigid.

\subsection{Notation and Connections on Block and Hole Polyhedra} \label{sec:notationAndConnections}

Informally, when we study a block and hole polyhedron as a bar and joint framework at $\p$ we want the block to be
(statically) rigid.  For each block $B_{\poly}^{k}$ we introduce some added subframework
$I^{k}=(V^{k},U^{k}, E^{k})$ where $V^{k}$ is the set of vertices of the block face, $U^{k}$ are some added
new vertices (possibly empty), and $E^{k}$ are some added edges on $V^{k}\cup U^{k}$ such that, with the 
edges of the polygon added, $I^{k}$ becomes an isostatic bar and joint framework realized at $\p|_{V^{k}}$
with generic positions for $U^{k}$. 

We note that if all vertices of a block face are collinear, then there is no possible $I^{k}$ 
making an isostatic framework, 
since this collinear polygon is already dependent.  
If the vertices of a block face boundary are coplanar, then we will need to add some vertex in $U^{k}$ 
off the plane in order 
to achieve an isostatic subframework.  If the vertices of the block face boundary polygon are not coplanar,
then we can create $I^{k}$ with only added edges and no added vertices.  

The key properties of the block and hole frameworks do not depend on which isostatic subframework is 
inserted for each block, provided that the boundary polygon of the original face is used as part of the
framework.  This is captured by the Isostatic Substitution Principle given in 
Lemma \ref{lem:isostaticSubstitution}.

With this in mind, in the remainder of this paper we will be non-specific about 
which isostatic subframework is used in place of a block, with the
exception that we do assume that the original polygon of the face is present among the edges of the
isostatic framework. 

\begin{definition} {\rm The} static framework graph, {\rm $G_{S}(\P)$, is the graph of a block and hole 
polyhedral framework with the added
frameworks $I^{k}$ for each block.  Since we do not 
pay attention to the isostatic subframeworks on the blocks, we consider $G_{S}(\P)$ 
to be a representative framework 
among an equivalence class of graphs (with various isostatic blocks inserted into the
block faces).  }
\end{definition} 

\begin{figure}
\begin{center}
\includegraphics[width=6in]{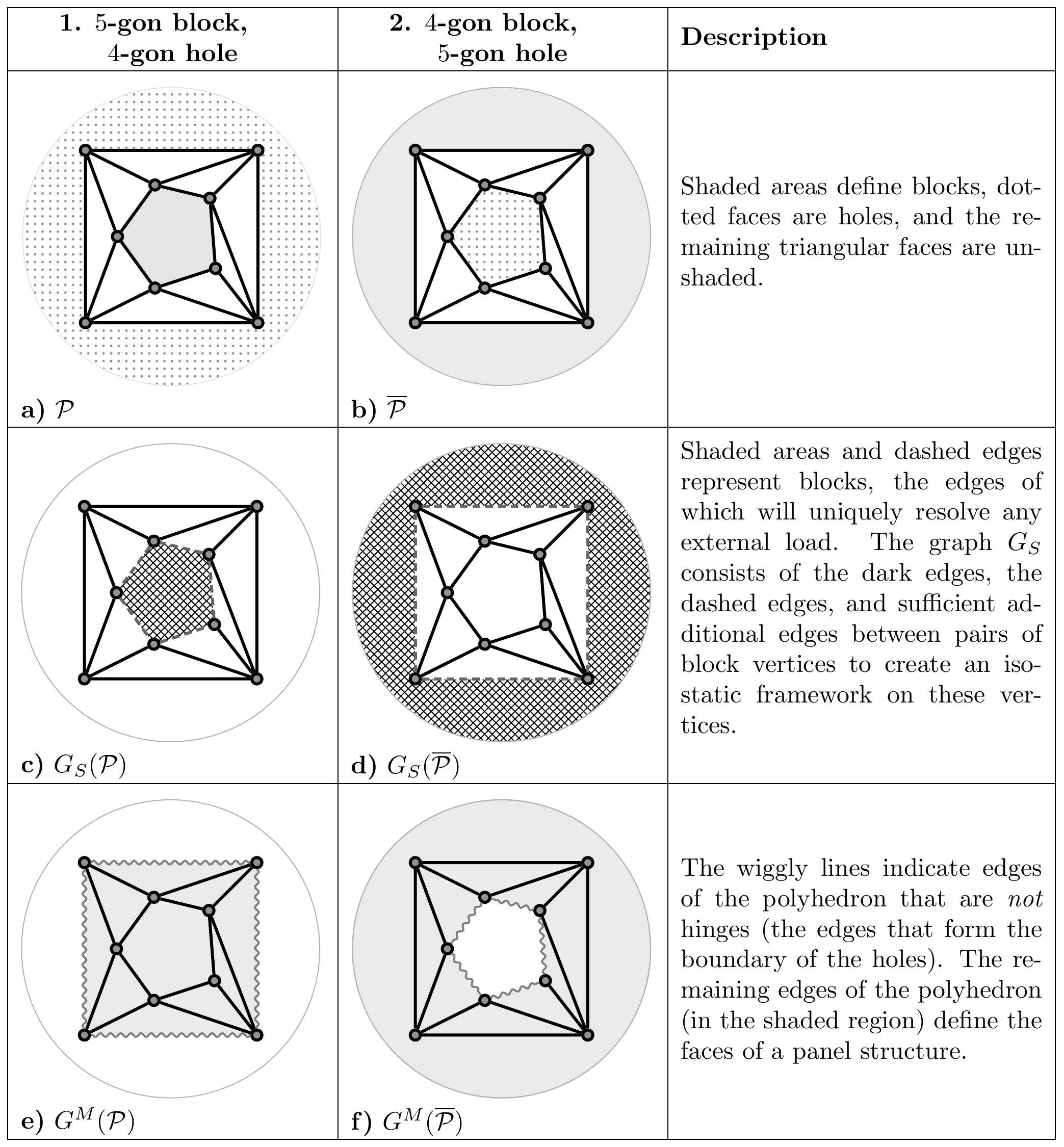}
\caption{Examples of block and hole polyhedra, and their associated graphs $G_S(\poly)$ and $G^M(\poly)$. \label{fig:example} }
\end{center}
\end{figure}

This graph $G_{S}(\P)$ will be used to track the stresses of such frameworks on $\poly$. The graphs $G_{S}(\P)$ and $G_{S}(\ov\P)$
exist for every block and hole
polyhedron (see Figure~\ref{fig:example}, (c) and (d)). At a configuration $\p$ they form  bar and joint frameworks with
well defined spaces 
of self-stresses, which we denote ${\mathcal{S}}(G_{S}(\poly,\p))$, ${\mathcal{S}}(G_{S}(\ov\poly,\p))$. 
The space of residual unresolved equilibrium loads for these frameworks (our proxy space
for the bar and joint infinitesimal motions), is denoted by 
${\mathcal{M}}(G_{S}(\poly,\p)),{ \mathcal{M}}(G_{S}(\ov\poly,\p)) $.

At the  heart of this paper, we 
explore the connections (essentially an isomorphism for appropriate polyhedra)
 between the space ${\mathcal{M}}(G_{S}(\poly,\p))$ for the original polyhedron and the stress space
${\mathcal{S}}(G_{S}(\ov\poly,\p))$ for the swapped polyhedron.

As an intermediary in the proofs, we will use  
an induced body and hinge structure on $(\poly,\p)$ in place of  ${\mathcal{M}}(G_{S}(\poly,\p))$  
to track these connections.   This body and hinge structure is composed of the rigid bodies 
(surface faces and bodies, but not holes), and edges between rigid faces of
the underlying spherical block and hole polyhedron $\poly$ to form the body and hinge 
 polyhedron $G^M(\poly)$ (Figure~\ref{fig:example}, (e) and (f)).  For a particular configuration $\p$, we denote the vector space of motion assignments on this structure by $\mathcal M(G^M(\poly, \p))$. As we will see, for block and hole polyhedra $\poly$ satisfying certain conditions, the spaces $\mathcal M(G^M(\poly, \p))$ and 
${\mathcal{M}}(G_{S}(\poly,\p))$ are isomorphic. We first address the situations in which these spaces are not isomorphic.

The structure $G^M(\poly,\p)$ may not form a sufficiently connected graph for the body and hinge structure to 
have only the motions of the underlying bar and joint framework, which make up the vector space ${\mathcal{M}}(G_{S}(\poly,\p))$.  
Figure \ref{fig:gussetNeed} (a) and (b) illustrate this with a polyhedron $\poly$ for which the body-hinge structure $G^M(\poly)$ is disconnected.  Figure \ref{fig:gussetNeed} (c) and (d) depict a related example where the connectivity is sufficient to capture the motions of the underlying bar and joint framework.
This problem of connectivity arises only when
$\poly$ has a vertex with two or more holes at the vertex, as in figure \ref{fig:gussetNeed} (a) and (b). 
The graph of rigid faces and shared edges $G^M(\poly)$, no longer capture the nature of the motions of the rigid faces at this vertex.   As such, the creation of the graph $G^M(\poly)$ may introduce extra motions not in the underlying framework.  This observation motivates the following definition:

\begin{figure}
\begin{center}
\subfloat[$\poly$]{\label{fig:gussetNeedPoly}\includegraphics[width=2in]{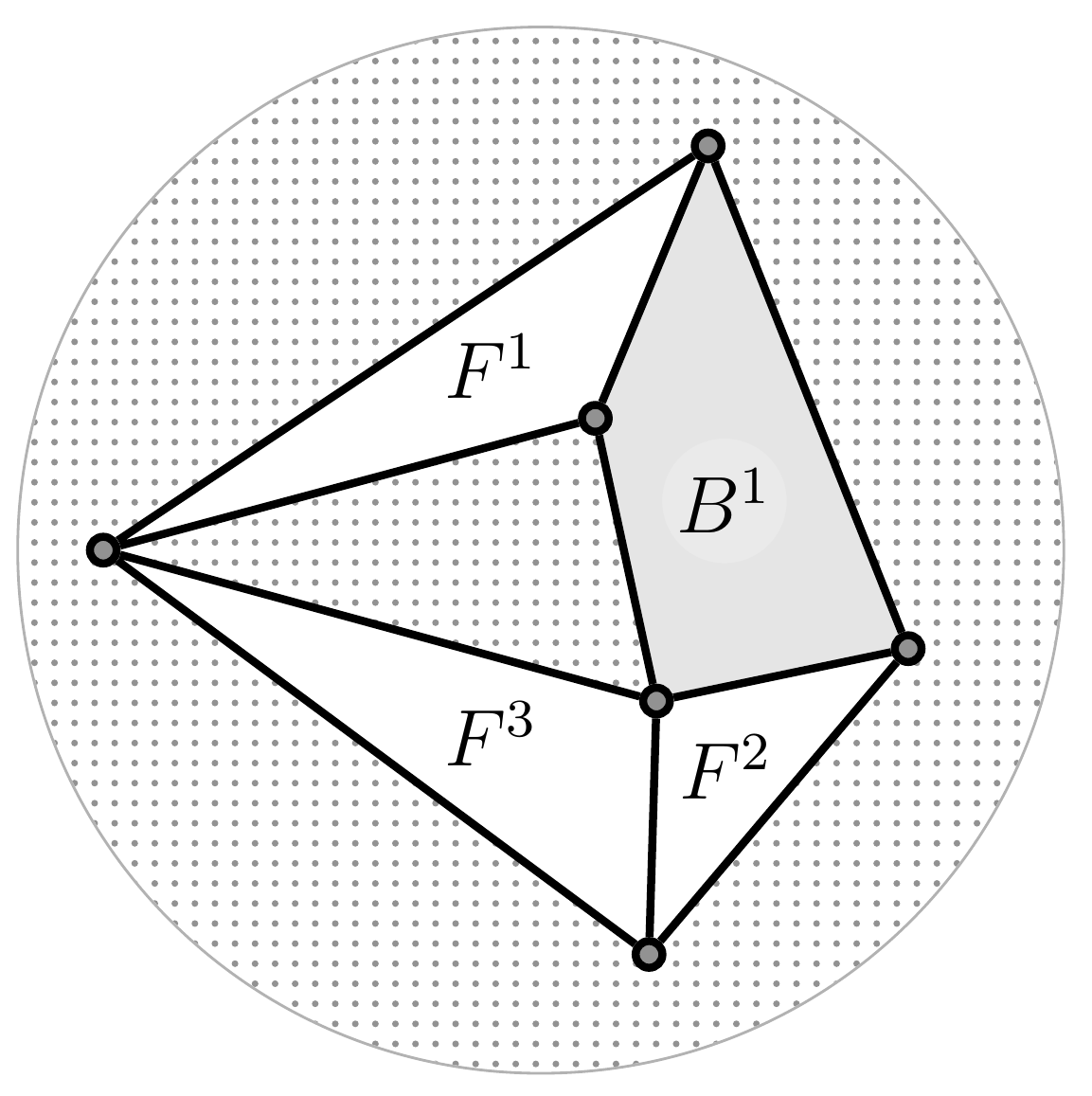}} \hspace{0.2in}               
\subfloat[$G^M(\poly)$]{\label{fig:gussetNeedGM}\includegraphics[width=2in]{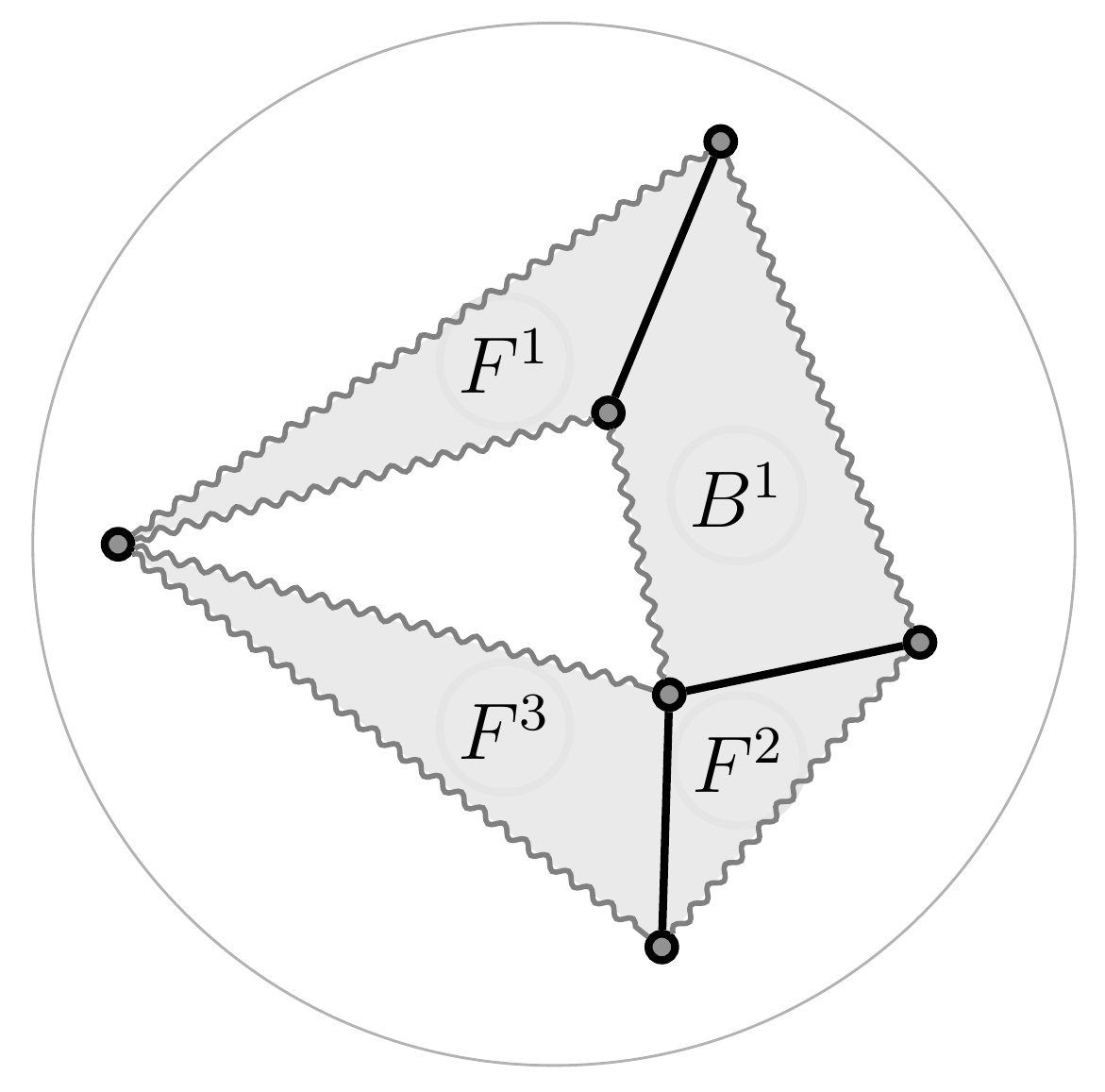}}\\
\subfloat[$\poly'$]{\label{fig:gussetedPoly}\includegraphics[width=2in]{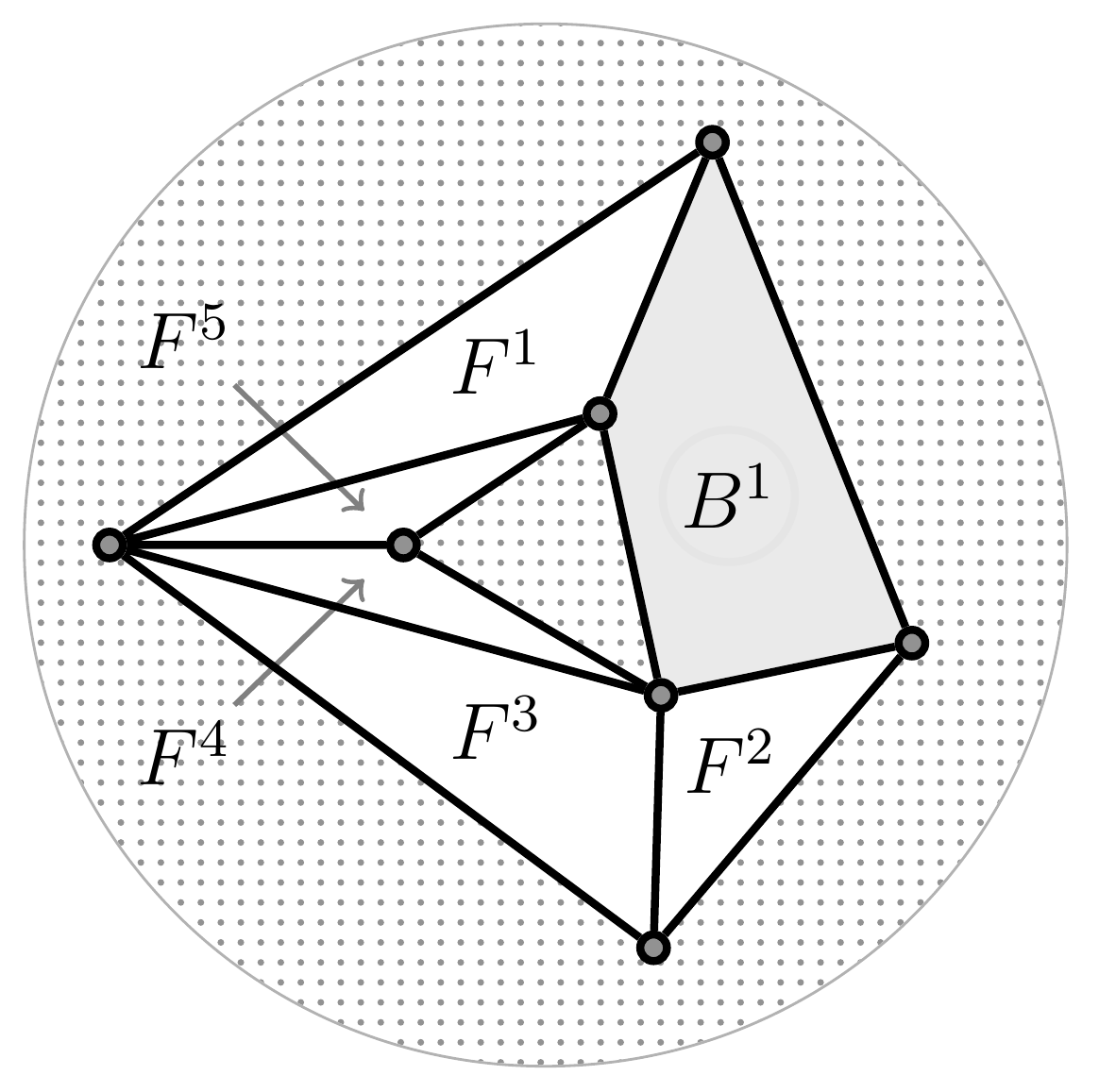}} \hspace{0.2in}
\subfloat[$G^M(\poly)$]{\label{fig:gussetedGM}\includegraphics[width=2in]{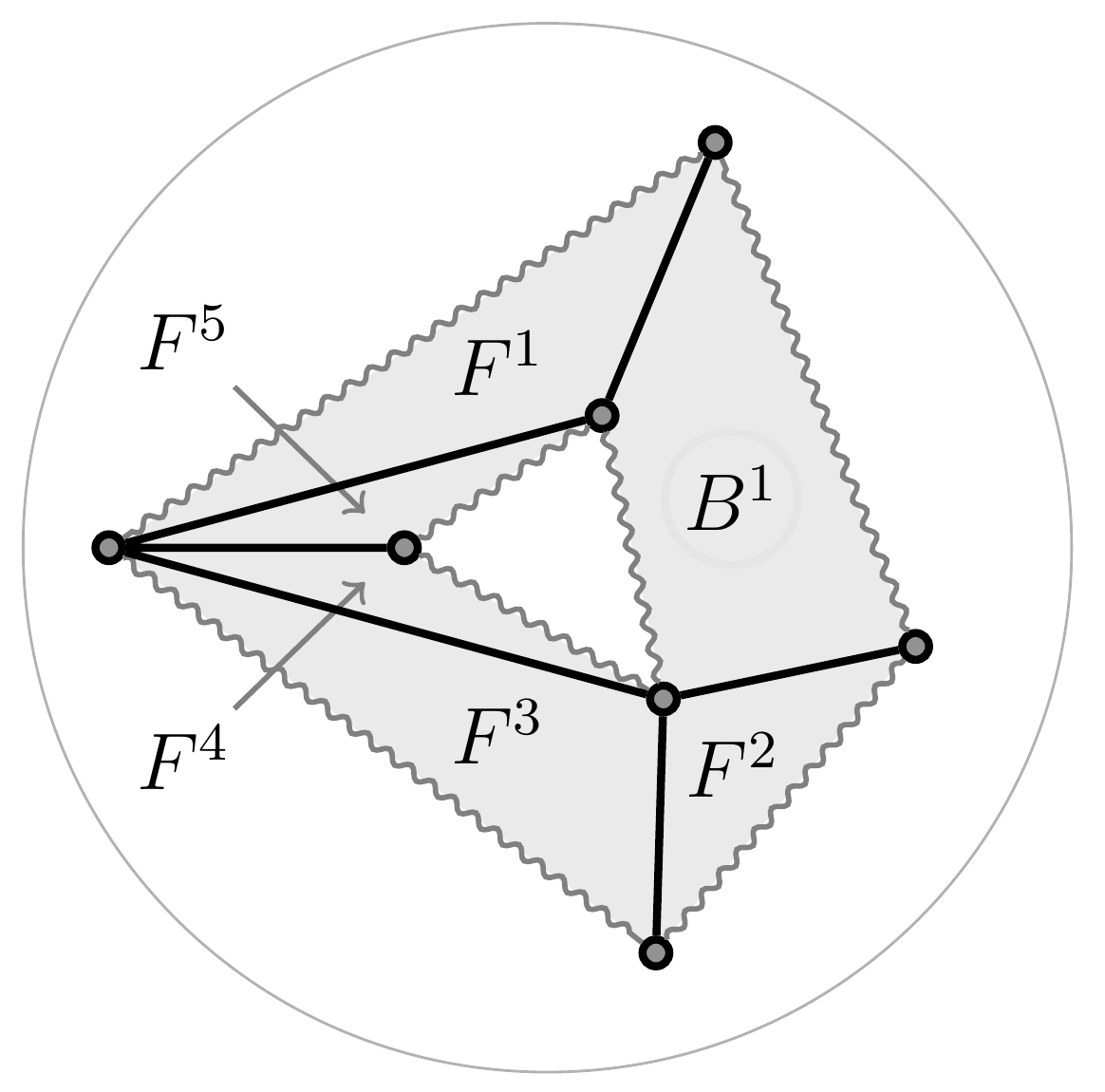}}
\caption{If we have a vertex in contact with two holes as in (a), the graph $G^{M}(\poly)$ shown in (b) does not track the motion of $\P$. The graph $\poly'$ shown in (c) has an isomorphic space of stresses, and now $G^{M}(\poly')$ shown in (d) does capture the motions. \label{fig:gussetNeed}}
\end{center} 
\end{figure}

\begin{definition}
{\rm A block and
hole polyhedra $\poly$ is }separated {\rm if there is at most  one hole at any given vertex.  
That is, there are no} contact
vertices {\rm where several holes meet.  }
\end{definition}
 
 \begin{proposition}  If a polyhedron $\poly$ is separated, then the spaces ${\mathcal M}(G_{S}(\poly,p))$ and
 ${\mathcal M}(G^{M}(\poly,\p))$ are isomorphic. 
 \label{prop:separatedStressMotion}
 \end{proposition}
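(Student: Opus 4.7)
The plan is to exhibit an explicit linear isomorphism between the two spaces by pulling a motion assignment on the body-hinge structure back to an infinitesimal motion of the bar-and-joint framework (and conversely), and then to verify that the separated hypothesis is exactly what makes the construction well-defined at each vertex.

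First I would use the Isostatic Substitution Principle (Lemma~\ref{lem:isostaticSubstitution} and Corollary~\ref{cor:GeneralisostaticSubstitution}) to pick a convenient isostatic subframework for each block; by Remark~2 of Section~\ref{sec:motions} any two such choices yield isomorphic spaces $\mathcal{M}(G_{S}(\poly,\p))$, so no generality is lost. After this reduction, each non-hole face of $\poly$ corresponds to an isostatic bar-and-joint subframework, and hence under any infinitesimal motion of $G_{S}(\poly,\p)$ moves as a rigid body with a well-defined screw center $\SB^{k}$.

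Next I would define the forward map $\Phi: \mathcal{M}(G^{M}(\poly,\p)) \to \mathcal{M}(G_{S}(\poly,\p))$ as follows. Fix a non-hole base face $F^{*}$; given a motion assignment $\{\omega^{km}\}$ on the hinges of $G^{M}(\poly,\p)$, the Proposition of Section~\ref{sec:motions} produces a unique infinitesimal motion of the bodies with $\SB^{F^{*}} = \0$ and screw centers $\SB^{k}$ on every non-hole face. I then let each vertex $v$ of $\poly$ move according to the screw center of any non-hole face containing it; the resulting bar velocities within a single rigid face are automatically compatible. The key check is that the velocity at $v$ is independent of the chosen face, and this is where separation enters: since at most one hole meets $v$, the non-hole faces containing $v$ form a single connected arc in $G^{M}$ (a full cycle if no hole meets $v$, a path otherwise). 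Consecutive faces of the arc share a polyhedron edge through $v$, which is a hinge $\Db^{km}$ of $G^{M}$, and the defining relation $\SB^{k} - \SB^{m} = \omega^{km}\Db^{km}$ forces the two bodies to agree on the velocity at every point of that hinge, in particular at $v$. Transitivity along the arc gives a single well-defined velocity at $v$, and hence an infinitesimal motion of $G_{S}(\poly,\p)$ whose image in $\mathcal{M}(G_{S}(\poly,\p))$ via the proxy of Remark~2 is $\Phi(\{\omega^{km}\})$.

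The inverse map goes the other way: any infinitesimal motion of $G_{S}(\poly,\p)$ restricts to a rigid motion on each non-hole face with screw $\SB^{k}$, and along each shared edge the relative motion $\SB^{k} - \SB^{m}$ is an infinitesimal rotation about that edge, so it equals $\omega^{km}\Db^{km}$ for a unique scalar; the resulting scalars inherit the symmetry and cycle conditions of a motion assignment on $G^{M}(\poly,\p)$. The two maps are mutual inverses by construction. The main obstacle, and the sole place separation is used, is precisely the vertex-consistency step in the forward direction: at a contact vertex the non-hole faces split into two or more independent arcs in $G^{M}$, and motion assignments can be chosen independently on each arc to produce body-hinge motions whose screws disagree at $v$, yielding spurious motions (illustrated in Figure~\ref{fig:gussetNeed}(a),(b)) with no counterpart in $G_{S}(\poly,\p)$.
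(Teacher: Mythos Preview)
Your proposal is correct and follows essentially the same two-direction argument as the paper's own sketch: an infinitesimal motion of $G_{S}(\poly,\p)$ assigns a screw center to every non-hole face, giving one direction, and separation guarantees that a body-hinge motion of $G^{M}(\poly,\p)$ determines a single well-defined velocity at each vertex, giving the other. The paper explicitly declines to justify the uniqueness step (``we have not developed the full background of infinitesimal motions''), whereas you supply the missing detail---the observation that, under separation, the non-hole faces at a vertex form a single connected arc in $G^{M}$ and the hinge relation $\SB^{k}-\SB^{m}=\omega^{km}\Db^{km}$ forces agreement along that arc---which is exactly the content the paper leaves implicit.
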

 
 \proof We will only sketch the outline of this connection, as we have not developed the full background of
 infinitesimal motions for bar and joint frameworks, or even the velocities at points induced by 
 the centers of motion.  The result is implicit in \cite{crapowhiteley}.
 
 The general observation is that every motion of the bar and joint framework will induce velocities which 
 correspond to the centers of motion for the rigid faces.  There is an injection from 
 ${\mathcal M}(G_{S}(\poly, \p))$ into ${\mathcal M}(G^{M}(\poly, \p))$.
 
 If the polyhedron is separated, then each motion in ${\mathcal M}(G^{M}(\poly, \p))$ induces 
 a set of unique velocities for each of the vertices in $\p$, and therefore a unique corresponding motion
 in ${\mathcal M}(G_{S}(\poly, \p))$.  This completes the isomorphism. 
 \qed
 
Figure \ref{fig:example} shows the two graphs $G^{M}$ and $G_{S}$ for both a block and 
hole polyhedron $\poly$ and its swapped form, $\spoly$. It is the connections among these four structures which will be at the heart of the analysis in Section \ref{sec:motionsAndStressesSeparated}. 

We note that the swapping induces a map of the related graphs: $G^{M}(\ov{\ov{\P }}) = G^{M}(\P )$.  
Moreover $G_{S}(\ov{\ov{\P }})$ is equivalent to $G_{S}(\P)$.  
That is, they are the same graphs up to an interchange of which isostatic 
subframeworks are inserted into the blocks of $\P$. 

Our analysis will build up the complete set of  connections for separated block and hole polyhedra (Section \ref{sec:motionsAndStressesSeparated}).  
In Section \ref{sec:generalPolyhedra} we will describe an operation which takes a general (non-separated) block and 
hole polyhedron to an extended (gusseted) block and hole polyhedron
with equivalent spaces of stresses and motions as bar and joint frameworks. 
\\

\begin{example}  A {\it tower} is a block and hole polyhedra with one $s$-gon block and one $t$-gon hole. 
In this example, we consider several of the towers described in \cite{wfsw}, and investigate the impact of swapping on
the spaces of self-stresses and motion to get an idea of what to anticipate in the later proofs.  
We will assume there are $\min(s,t)$ edge disjoint paths between this block 
and hole, in order to give simple predictions for the anticipated 
spaces of stresses and first-order motions (see \cite{wfsw}). (See
Figure \ref{fig:example})  
\begin{enumerate}

\item  If $s > t$, then we have implicitly started with a triangulated sphere (with $|E|=3|V|-6$), 
removed $t-3$ edges from the 
hole and added $s-3$ edges to make the block polygon (already triangulated) into an isostatic subgraph.  The net effect 
is that $|E'| = 3|V|-6 + [s-t]>3|V|-6$.  This will have more bars than needed for first-order rigidity, so we will have 
a space of self-stresses of dimension at least $s-t$.  In generic realizations, 
we will have only the trivial motion assignment.   

\item  If $s < t$, then we have started with a triangulated sphere, removed $t-3$ edges from the 
hole and added $s-3$ edges to make the block polygon (triangulated) into an isostatic subgraph.  The net effect 
is that $|E'| = 3|V|-6 - [t-s]< 3|V|-6 $.  This will have fewer bars than needed to resolve all equilibrium loads, 
so we will have
a space of internal motions of dimension at least $t-s$.  In generic realizations, the bars will be independent, and we
will 
have only the zero dimensional space of trivial self-stresses.  

This structure is really a swapped polyhedron from case (i), where $s=t$ and $t=s$. 
Translating, we see that the dimension of the space of internal motions after swapping is the same dimension as the 
space of self-stresses before swapping, and that the space of self-stresses after swapping is the same dimension 
as the space of internal motions before swapping.  

\item  If $s=t$, then making the hole removes $s-3$ edges from a triangulated sphere, and adds $s-3$
edges to make the block polygon (triangulated) into an isostatic subgraph.  We return to a
collection of edges with the desired $|E|=3|V|-6$.  In generic position, this will be isostatic with only the trivial
self-stress, and only the trivial motion assignment.   The swapped polyhedron will have the same overall counts, and the
dimensions of stresses and motion assignments will match after swapping. 

\end{enumerate}
\end{example}

From the example, we can anticipate that swapping will take internal motions of the original block and hole 
polyhedra to self-stresses of the 
swapped structure, both generically, and in special position geometrically.   
This is a good set of elementary examples to keep in mind in the next few sections.

\section{Motions and Stresses in Separated Spherical Structures} \label{sec:motionsAndStressesSeparated}

In this  section, we prove our main result for the case of {\it separated block and
hole polyhedra} in which there is at most one block and at most one hole at any given vertex.  This is sufficient to guarantee that the graphs 
$G^{{M}}(\poly)$, as well as $G^{{M}}(\ov\poly)$, are connected and that these graphs provide full information about the rigidity of
the underlying block and hole polyhedron.  While there are block and hole polyhedra with contact vertices, it is helpful to cluster
all of the extensions to more general block and whole polyhedra into the next section, where a single added technique captures all these extensions. 

As our results are geometric in nature, we will be working with specific realizations of block and hole polyhedra as bar and
joint frameworks (for $G_{S}(\P)$) or as hinged panel structures (for  $\GM(\P)$).  When a projective embedding $\p$
of the graph is given, we write $\GM(\P, \p)$ or  $G_{S}(\P, \p)$ to denote the particular configuration in $\PS^3$
viewed as a body and hinge structure or a bar and joint framework, respectively. Note that the key rigidity properties,
namely static rigidity, infinitesimal rigidity, and independence, are projectively invariant; (see\cite{crapowhiteley}, \S4.2).

In a specific geometric realization, we will restrict the choices of the block subgraph in $G_{S}(\P, \p)$ to those which are isostatic 
in that realization. For this reason, in order to have some isostatic subgraphs available, 
we will assume that the configurations used do not have all vertices of a block face (and because of
swapping, of a hole) collinear.  We call such configurations {\it block and hole general position}.   

We develop the proof of the main theorem through two propositions. 

\begin{proposition} Given a separated block and hole polyhedron $\mathcal{P}$, there is 
injective map from the
stresses of the bar and joint framework $G_S(\ov\poly, \p)$ to the motion assignments of the hinged panel structure of the
swapped polyhedron at the same configuration, $\GM({\mathcal{P}}, \p)$.
\label{prop:stressesToMotionsSeparated}
\end{proposition}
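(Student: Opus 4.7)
The plan is to define the map by reading the stress coefficients on hinges directly off the stress coefficients on the corresponding bars, and then verify the motion-assignment cycle condition using the cut-set theorem in $G_S(\ov\poly,\p)$. For each hinge $D^{km}$ of $\GM(\poly)$ the two non-hole faces $F^k, F^m$ share a unique edge $(i,j)$ of $\poly$, which is automatically a bar of $G_S(\ov\poly)$; given a stress $\lambda$ on $G_S(\ov\poly,\p)$, I set $\omega^{km}:=\lambda_{ij}$, and the symmetry $\omega^{km}=\omega^{mk}$ is immediate. The substance of the proof is then the cycle condition $\sum \omega^{km}\Db^{km}=\0$.

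To reduce this to manageable cases, I plan to embed $\GM(\poly)$ on the sphere inherited from $\poly$. Under the separation hypothesis, each face-region of this embedding is of one of two types: (i) a small disk around a vertex $v$ lying on no hole, bounded by a \emph{vertex-cycle} through all hinges at $v$; or (ii) one larger disk around each hole $H$, merged with the local regions of the boundary vertices of $H$ (merged precisely because separation forbids a second hole at any such vertex), bounded by a \emph{hole-cycle}. These face cycles span the cycle space of $\GM(\poly)$ with a single relation, so it suffices to verify the condition on each.

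For a vertex-cycle at a $v$ lying on no hole, every $\poly$-edge at $v$ is a hinge of $\GM(\poly)$ and no block-interior bar of $G_S(\ov\poly)$ is incident to $v$; hence the cut set consisting of edges at $v$ is exactly the hinge set of the vertex-cycle, and Theorem~\ref{thm:cutset} yields the condition immediately. For a hole-cycle around $H$, I would identify the $\poly$-edges crossed by this cycle as exactly the $\poly$-edges from $V(H)$ to $V\setminus V(H)$: boundary edges of $H$ and any chords within $V(H)$ lie inside the disk, while the cycle crosses only outgoing edges, each of which is a hinge of $\GM(\poly)$ by separation. Within $G_S(\ov\poly)$ this same edge set is the cut set separating the block vertex set $V(H)\cup U_H$ from the rest, since all added isostatic bars of the $\ov\poly$-block for $H$ remain internal; a second application of Theorem~\ref{thm:cutset} delivers the hole-cycle condition.

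Injectivity is then easy: $\omega\equiv\0$ forces $\lambda_e=0$ on every hinge, and the stress equilibrium at each vertex of the $\ov\poly$-block on $V(H)\cup U_H$ presents the remaining $\lambda$-values as a self-stress on that block carrying zero external load, which must be trivial since the block is isostatic. The main obstacle I anticipate is the identification used for the hole-cycle: separation is exactly what makes the face-region around $H$ ``clean'' — a single hole together with its boundary vertices — guaranteeing both that the hinge/cut-set identification works and that the extra isostatic bars cannot escape the block. When two holes share a vertex this breaks down, and repairing it is the purpose of the gusset construction in Section~\ref{sec:gussets}.
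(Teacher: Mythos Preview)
Your proof is correct and follows the same overall strategy as the paper: define $\omega^{km}=\lambda_{ij}$ on each edge patch, verify the motion-assignment cycle condition via the cut-set theorem (Theorem~\ref{thm:cutset}), and deduce injectivity from the independence of the isostatic $\spoly$-blocks (which, by separation, share no vertex).

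The one genuine difference is in how the cycle condition is checked. The paper does it in a single stroke: since $\GM(\poly)$ is embedded on the sphere, \emph{any} simple closed cycle of panels and hinges is a Jordan curve and therefore disconnects the sphere; because the cycle never enters a $\poly$-hole, each $\spoly$-block (with its added edges and vertices) lies entirely on one side, so the hinges of the cycle already form a cut set of $G_S(\spoly,\p)$ and Theorem~\ref{thm:cutset} applies directly. You instead pass to a generating set of the cycle space (the face boundaries of the spherical embedding of $\GM(\poly)$) and handle the two face types separately. This is equally valid but costs you the case analysis; the paper's route is shorter and avoids any bookkeeping about what the faces of $\GM(\poly)$ look like.

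One small imprecision worth flagging: a ``chord'' $(u,w)$ with $u,w\in V(H)$ that is not on $\partial H$ does not in general lie inside the hole-disk as you claim. It is a hinge of $\GM(\poly)$, and the face-boundary walk of the hole-cycle traverses it twice, once in each direction, so its contributions cancel. Thus your identification of the hole-cycle with the cut set separating $V(H)\cup U_H$ from the rest is correct at the level of $1$-chains, and the argument goes through unchanged.
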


\begin{proposition} Given a separated block and hole polyhedron ${\mathcal{P}}$, there is an injective map
from the motion assignments of the hinged panel structure $\GM({\mathcal{P}}, \p)$ to the stresses of the swapped
block and hole structure as a bar and joint framework at the same configuration,  $G_S(\ov{\mathcal{P}}, \p)$.
\label{prop:motionsToStressesSeparated}
\end{proposition}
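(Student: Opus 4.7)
The plan is to construct a self-stress $\lambda$ on $G_S(\ov{\poly},\p)$ directly from the motion assignment $\omega$ on $\GM(\poly,\p)$, using the natural identification of each hinge $\Db^{km}$ of $\GM(\poly)$ with the edge $(i,j)$ of $\poly$ separating the two non-hole faces $F^k,F^m$. Such an edge is also an edge of $\ov{\poly}$, and hence of $G_S(\ov{\poly})$, so I set $\lambda_{ij} := \omega^{km}$ on every such edge. The bars of $G_S(\ov{\poly})$ that remain unassigned are exactly the internal bars of the isostatic subframeworks inserted on the blocks of $\ov{\poly}$ (that is, the former holes of $\poly$), together with the boundary polygons of those blocks; these will be determined by the isostatic resolution argument below.

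Verification of the self-stress condition at each vertex $v$ splits into two cases controlled by the separated hypothesis. If no hole of $\poly$ meets $v$, the cyclic sequence of rigid faces around $v$ closes in $\GM(\poly)$, so the motion-assignment axiom gives
\[
\sum_{\text{hinges } e \ni v}\omega^{e}\,\Db^{e} \;=\; \0.
\]
Since separatedness forces $v$ to miss every block of $\ov{\poly}$ as well, no added subframework bars meet $v$ and this display is exactly the self-stress condition there. In the substantive case where $v$ lies on a unique hole $H$ of $\poly$, the chain of rigid faces at $v$ runs from one face $G_{1}(v)$ bordering $H$ to the other $G_{n}(v)$, and the partial sum
\[
L_{v} \;:=\; \sum_{\text{hinges } e \ni v}\omega^{e}\,\Db^{e} \;=\; \SB^{G_{1}(v)} - \SB^{G_{n}(v)}
\]
is generally nonzero.

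The key step is to show that the $L_{v}$, as $v$ ranges over $V(H)$, form an equilibrium load on the block $\ov{H}$ of $\ov{\poly}$. Indexing the vertices of $H$ cyclically as $v_1,\dots,v_m$, consecutive vertices $v_i,v_{i+1}$ share the edge $v_i v_{i+1}$ of $H$ on whose other side sits a common rigid face $G_{n}(v_i)=G_{1}(v_{i+1})$; thus the differences $L_{v_i}=\SB^{G_{1}(v_i)}-\SB^{G_{n}(v_i)}$ telescope around $H$ to give $\sum_{i}L_{v_i}=\0$. Each $L_{v_i}$ is by construction a linear combination of $2$-extensors of lines through $\p_{v_i}$ and so passes through $\p_{v_i}$, so $\{-L_{v_i}\}$ is a bona fide equilibrium load applied to $V(H)$ (extended by $\0$ on any auxiliary vertex set $U$ of the inserted subframework). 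Because the subframework on $\ov{H}$ is isostatic by construction, it resolves this load, and I take the resulting bar coefficients as the remaining values of $\lambda$; equilibrium at the auxiliary vertices of $U$ is automatic from the resolution. The separated hypothesis ensures that the vertex sets $V(H)$ for distinct holes $H$ of $\poly$ are disjoint, so the local resolutions do not interact and the global assignment is well defined.

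The map $\omega\mapsto\lambda$ is manifestly linear, and injectivity is immediate because each hinge weight $\omega^{km}$ reappears verbatim as $\lambda_{ij}$ on the corresponding edge, so $\lambda\equiv 0$ forces $\omega\equiv 0$. The main obstacle I expect is the telescoping identity $\sum_{i}L_{v_i}=\0$: this is where the motion-assignment axiom on closed cycles of rigid faces in $\GM(\poly)$ does the real work, and it requires careful tracking of the edge-patch orientation conventions $\ijkm$ versus $\jimk$ so that the hinge signs $\Db^{km}=-\Db^{mk}$ match correctly around $H$. A secondary, routine point is to observe that producing the coefficients of $\lambda$ on the inserted subframework of $\ov{H}$ is a legitimate application of the isostatic property in the sense of the general framework of Corollary~\ref{cor:GeneralisostaticSubstitution}, even when auxiliary vertices $U$ are present.
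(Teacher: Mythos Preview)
Your proposal is correct and follows essentially the same approach as the paper's proof. The only difference is presentational: to show that the loads $\{L_{v}\}$ on the vertices of a hole $H$ sum to $\0$, you pass to the screw-center representation and telescope $\SB^{G_{1}(v_i)}-\SB^{G_{n}(v_i)}$ around $H$, whereas the paper observes that the concatenation of these face-chains is a single closed panel--hinge cycle surrounding $H$ in $\GM(\poly)$ and invokes the motion-assignment cycle axiom directly on that cycle---these are two phrasings of the same computation.
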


Together, these propositions and their proofs will yield the main theorem for separated polyhedra:

\begin{theorem} 
Given a separated block and hole polyhedron, $\mathcal{P}$, there is 
an isomorphism between
 the space of 
motion assignments of the swapped block and hole structure $\GM(\ov{\mathcal{P}},\p)$ as 
a hinged panel polyhedron and
 the space of stresses of the bar and joint framework at the same configuration, $G_S(\mathcal{P}, \p)$. That is, $\mathcal S(G_S(\poly, \p)) \simeq \mathcal M(G^M(\spoly, \p))$.
 \label{thm:mainSeparated} 
\end{theorem}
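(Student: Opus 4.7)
The plan is to assemble the theorem directly from Propositions~\ref{prop:stressesToMotionsSeparated} and~\ref{prop:motionsToStressesSeparated}, together with the involutive property $\ov{\ov{\poly}} = \poly$ and a standard dimension count. Applying Proposition~\ref{prop:stressesToMotionsSeparated} with $\spoly$ playing the role of the input polyhedron yields an injective linear map
\[
\phi : \mathcal{S}(G_S(\poly, \p)) \hookrightarrow \mathcal{M}(G^M(\spoly, \p)),
\]
since $\ov{\spoly} = \poly$. Applying Proposition~\ref{prop:motionsToStressesSeparated} in the same way produces an injective linear map in the opposite direction,
\[
\psi : \mathcal{M}(G^M(\spoly, \p)) \hookrightarrow \mathcal{S}(G_S(\poly, \p)).
\]

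Both of these vector spaces sit inside finite-dimensional ambient spaces of scalar assignments (to the bars of $G_S$ and to the hinges of $G^M$, respectively). Two injective linear maps in opposite directions between finite-dimensional vector spaces force the two dimensions to coincide, and consequently each of $\phi$ and $\psi$ is automatically surjective. Either map then realizes the claimed isomorphism $\mathcal{S}(G_S(\poly,\p)) \simeq \mathcal{M}(G^M(\spoly,\p))$.

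The one subtlety worth flagging is the symmetric application of the two propositions to $\spoly$. The separated hypothesis is invoked via Proposition~\ref{prop:separatedStressMotion} to ensure that $G^M(\poly,\p)$ faithfully records the bar-and-joint motions of $G_S(\poly,\p)$; for the same identification to hold at $G^M(\spoly,\p)$ we additionally need $\spoly$ to be separated, which amounts to $\poly$ having no vertex incident to two blocks. I would therefore read the hypothesis of the theorem as implicitly requiring this symmetric form of separation (no two blocks and no two holes meeting at a common vertex), with the remaining contact-vertex cases being precisely those handled in Section~\ref{sec:gussets}. The hard work thus does not lie in this final assembly, but in the two underlying propositions, which must construct $\phi$ and $\psi$ by correlating edge-patch orientations with signed hinge weights and deploying the cut-set equations of Theorem~\ref{thm:cutset}.
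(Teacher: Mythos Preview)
Your argument is correct and matches the paper's own assembly: the two propositions furnish injective linear maps in opposite directions between finite-dimensional spaces, forcing equality of dimensions and hence an isomorphism. Your flagged subtlety is also exactly right; the paper's working hypothesis in Section~\ref{sec:motionsAndStressesSeparated} is the symmetric one (at most one block and at most one hole at each vertex), so that both $\poly$ and $\spoly$ are separated, and the residual contact-vertex cases are deferred to the gusset construction of Section~\ref{sec:gussets}.
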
 

\subsection{From Stresses to Motions in Separated Block and Hole Polyhedra} \label{sec:stressesToMotions}

\noindent {\bf Proof of Proposition \ref{prop:stressesToMotionsSeparated} for separated block and hole polyhedra.} 

Let $\spoly$ be a separated block and hole polyhedron with blocks $\blocks = \{\spolyblock^1, \dots 
\spolyblock^\ell\}$ and
holes $\holes = \{\spolyhole^1, \dots \spolyhole^n\}$, and consider the graph $G_S(\spoly, \p)$, where $\p$ is some
embedding of the graph into $\PS^3$.

Suppose we have a non-trivial stress $\Lambda$ on $G_S(\spoly, \p)$, given by scalars $(\dots,\lambda_{ij},\dots)$. 
 From the stress, we define a motion assignment on the swapped framework $\GM({\poly}, \p)$ by setting $\omega^{km} =
\lambda_{ij}$ if $\ijkm$ is an edge patch.  This map ignores the scalars within, or on the boundary of any block of 
$G_S(\spoly, \p)$ 
as a block $B_{\spoly}^{k}$ in $G_S(\spoly, \p)$ becomes the hole $H_{\poly}^k$ in the
swapped polyhedron and the edges are not in $\GM(\poly,\p)$

To show that the new set of scalars $(\ldots, \omega^{mk}, \ldots)$ is indeed a valid motion assignment to the 
hinges $(\dots, \Db^{mk}, \dots)$, 
we first prove that there is exactly one scalar for each edge; that is,  $\omega^{mk} = \omega^{km}$. 
To see this, note that the scalar assignment $\lambda_{ij}$ to the bar $\p_i\p_j$
must be balanced by the scalar $\lambda_{ji}$ on the bar $\p_j\p_i$. That is, 
$ \lambda_{ij} (\p_i \p_j) = -\lambda_{ji} (\p_j \p_i). $
Since $\p_i \p_j = - (\p_j \p_i)$, $\lambda_{ij} (\p_i \p_j) = \lambda_{ji} (\p_i \p_j)$, and 
it follows that  $\omega^{mk} = \lambda_{ij} = \lambda_{ji} = \omega^{km}$.

Next, we show that given any simple closed cycle 
$\mathcal{Z}=F^0 \Db^{01} F^1 \Db^{12} F^2 \cdots F^k \Db^{k0}$ of panels and hinges
in the swapped polyhedral structure $\GM(\poly, \p)$, we have $$\sum_{\Db^{km}\in \mathcal{Z}}
\omega^{km} \Db^{km} = \0$$ where the sum is over each hinge in the cycle.  Note that such a cycle necessarily 
does not cross any holes as faces.  

Because $\GM(\poly)$ is a part of the spherical polyhedron, any simple closed 
cycle of panels and hinges (possibly including blocks)
will disconnect the polyhedron, and the graph $G_{S}(\spoly)$.  
Since $\omega^{km} = \lambda_{ij}$, $\Db^{km} = \p_i \p_j$, and the collection of 
edges $\{v_{i},v_{j}\}$ is a cut set, 
Theorem \ref{thm:cutset} yields: 

\begin{equation}
\begin{split}
\sum_{\Db^{km} \in \mathcal{Z}} \omega^{km} \Db^{km}  &=  
\sum_{i = 1}^n    \left( \sum_{\substack{\{j| \Db^{km} \in \mathcal{Z}, \\ \ \ \ \ \ \ \Db^{km} = \q_i\p_j\}}} \lambda_{ij} \p_i \p_j \right)\ \ \ \ \ \hfill \\
 &= 0. \ \ \ \ \ \ \ \ \ \ \  \ \hfill 
\end{split}
\nonumber
\end{equation}

Since both conditions of a motion assignment are satisfied, a stress in a bar and joint framework 
induces a motion assignment in the swapped panel polyhedron.  

To see that this injective, we need to check that distinct self-stresses induce distinct motion
assignments.  Suppose there are two distinct self-streses $\Lambda$ and $\Lambda'$ with  $\lambda_{ij}\neq \lambda'_{ij}$ for some block edge, and all non-block edges have trivial (zero) scalars. Since the polyhedron is separated, no two blocks share
a vertex and hence the self-stress must be contained on a single block.  But each block has an
isostatic framework (independent) and therefore contains no self-stress, which provides the contradiction.  We conclude that the self-stresses $\Lambda$ and $\Lambda'$ must differ on 
some edges which induce distinct scalars for the motion assignment. 
\qed

The correspondence is actually a linear transformation of the two vector spaces,
 as the sum of two stresses goes to the sum of the 
 corresponding motion assignments and a scalar multiple of a stress goes to the scalar multiple 
 of the motion assignment. 
 
 \begin{corollary}
  Given a separated block and hole polyhedron $\mathcal{P}$, there is 
injective linear transformation from the space of
stresses of the bar and joint framework $G_S(\mathcal{P}, \p)$ to 
the space of motion assignments of the hinged panel structure of the
swapped polyhedron at the same configuration, $\GM(\ov{\mathcal{P}}, \p)$.
\label{cor:stressesToMotionsSeparatedVS}
 \end{corollary}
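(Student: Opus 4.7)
The plan is to promote the injection $\Phi$ constructed in the proof of Proposition \ref{prop:stressesToMotionsSeparated} into an injective linear transformation between the two relevant vector spaces. Well-definedness of $\Phi$ as a map into $\mathcal{M}(\GM(\spoly,\p))$ and its injectivity have already been established in that proof, so the only remaining task is to verify that $\Phi$ respects the vector-space operations on $\mathcal{S}(G_S(\poly,\p))$ and $\mathcal{M}(\GM(\spoly,\p))$.

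First I would unpack the operations. Both spaces sit inside coordinate vector spaces, indexed respectively by the edges of $G_S(\poly,\p)$ and by the hinges of $\GM(\spoly,\p)$, with addition and scalar multiplication performed coordinate-wise on the scalar assignments $(\ldots,\lambda_{ij},\ldots)$ and $(\ldots,\omega^{km},\ldots)$. The map $\Phi$ was defined by the coordinate-wise rule $\omega^{km} = \lambda_{ij}$ whenever $\ijkm$ is an edge patch across a surface edge of $\poly$, while the scalars assigned to block edges of $\poly$ are discarded (they do not index hinges of $\GM(\spoly,\p)$ because those block edges lie interior to, or on the boundary of, the new holes of $\spoly$). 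Because this coordinate-by-coordinate identification is itself a linear map between the ambient tuple spaces, the identity
\[ \Phi(\Lambda + c\Lambda') = \Phi(\Lambda) + c\,\Phi(\Lambda') \]
holds for all $\Lambda, \Lambda' \in \mathcal{S}(G_S(\poly,\p))$ and all scalars $c$.

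I expect no real obstacle here: the substantial work was already done by the proposition, which verified that the produced tuple satisfies both defining axioms of a motion assignment and that distinct self-stresses produce distinct motion assignments (via the fact that each block's isostatic subframework is independent, forcing a non-trivial self-stress to differ on some non-block edge). The corollary simply records that the construction is not only an injection of sets but an injective linear map, which is immediate from the coordinate-wise nature of the assignment together with the linearity of restriction to, and zero-extension from, a subset of coordinates.
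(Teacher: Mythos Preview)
Your proposal is correct and follows essentially the same approach as the paper: the paper simply remarks that the correspondence of Proposition~\ref{prop:stressesToMotionsSeparated} is a linear transformation because the sum of two stresses goes to the sum of the corresponding motion assignments and a scalar multiple of a stress goes to the scalar multiple of the motion assignment. Your argument makes exactly this observation, phrased in terms of the coordinate-wise nature of the map.
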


\subsection{From Motions to Stresses in Separated Block and Hole Polyhedra} \label{sec:motionsToStresses}

We now give the proof of the converse result.\\ 

\noindent{\bf Proof of Proposition \ref{prop:motionsToStressesSeparated}  for separated block and hole polyhedra} 

Let $\poly$ be a separated block and hole polyhedron with blocks $\blocks = \{\polyblock^1, \dots \polyblock^\ell\}$ and
holes $\holes = \{\polyhole^1, \dots \polyhole^n\}$, and consider the graph $\GM(\poly, \p)$, where $\p$ is some
embedding of the graph into $\PS^3$. 

Suppose we have some non-trivial motion assignment $(\dots \omega^{km} \dots )$ on the graph $\GM(\poly, \p)$. 
Consider the swapped block and hole polyhedral framework $\spoly$, and set 
$\lambda_{ij} = \omega^{km}$ whenever $\ijkm$ is an edge patch. 
The result is an assignment of scalars to some of the bars of $G_S(\spoly, \p)$. 
We show that this can be extended to a stress on all the bars of $G_S(\spoly, \p)$. 
In particular, we show that we can assign scalars to the remaining bars such that
for each vertex of $\spoly$, Equation (\ref{eqn:selfStressCondition}) is satisfied.  

Since $\poly$ is separated, $\spoly$ is also separated.  Therefore any vertex, $v_i$ of $\spoly$ is 
one of the following two distinct types: 
it is either adjacent to a single block (called a {\it block vertex}), 
or $a_i$ is adjacent to no block of $\spoly$.  

In the latter case, there is a simple closed cycle of panels and oriented hinges in $\GM(\poly)$, 
$\mathcal{Z}=F^0 \Db^{01} F^1 \Db^{12} F^2 \cdots F^k \Db^{k0}$ around $v_i$.  
Since $\lambda_{km} = \omega^{ij}$, $\Db^{km} = \p_i \p_j$, and $\sum \omega^{km}\Db^{km} = \0$  
for the cycle around $v_i$,  
$$\sum_{i = 1}^n \left( \sum_{\substack{\{j| \Db^{km} \in \mathcal{Z}, \\\ \ \ \ \ \ \Db^{km} = \q_i\p_j\}}} \lambda_{ij} \p_i\p_j \right)= \0$$
 holds in $\spoly$.  Hence, Equation (\ref{eqn:selfStressCondition}) holds for 
all non-block vertices.  



It remains to extend the scalars $(\ldots, \lambda_{ij}, \ldots)$ to the edges of the blocks in $\spoly$, 
and to show that the block vertices satisfy Equation (\ref{eqn:selfStressCondition}).  
Because $\poly$ is a separated block and hole polyhedron, 
there is a simple closed cycle of panels and hinges,
 $\mathcal{Z}= (F^0 \Db^{01} F^1 \Db^{12} F^2 \cdots F^k \Db^{k0})$, 
around the boundary of every hole in $\GM(\poly, \p)$ (i.e. every block of $\spoly$).  
The cycle satisfies the cycle condition 
$\sum_{\Db^{km}\in \mathcal{Z}} \omega^{km} \Db^{km} = \0$.  
Consider the hole $H_{\poly}^i$ with vertices $J = \{b_1, \ldots, b_n\}$ and let 
$\mathcal{Z}$ be the set of oriented hinges into the hole vertices $b_i$ from adjacent non-hole vertices $a_j$.
Under swapping, this property of the holes of $\poly$ translates to a property about the blocks in $\spoly$.  Indeed,  
$\spolyblock^i$ satisfies 
\begin{equation}\label{eqn:block2}  
\displaystyle \sum_{i = 1}^n \left( \sum_{\substack{\{j| \Db^{km} \in \mathcal{Z}, \\\ \ \ \ \ \ \Db^{km} = \q_i\p_j\}}} 
\lambda_{ij} \q_i \p_j \right) = \0
\end{equation}

Equation~(\ref{eqn:block2}) implies that the forces on the block vertices form an equilibrium load.
Since each block is built as an isostatic subframework, the block is statically rigid, and therefore its edges will resolve any
equilibrium load on the block.   Furthermore, because the block is isostatic and therefore independent, there is in fact a {\it unique}
assignment of scalars to the block edges that resolves this load. 
These scalars are the missing $\lambda_{ij}$'s.
We now have exactly one scalar for every edge of $G_S(\spoly, \p)$.  
The resolution of the stress in the block guarantees that the equilibrium condition 
(Equation (\ref{eqn:selfStressCondition})) is satisfied for each of the block vertices.

Note that the zero motion assignment goes to the zero self-stress, as expected. 
If we have two different motion assignments with $\omega^{k,m}\neq \theta^{k,m}$, 
clearly the induced self-stresses are zero, so the map is injective. 
\qed 

Again, this map is a linear transformation.  

\begin{corollary}  Given a separated block and hole polyhedron ${\mathcal{P}}$, there is an injective map
from the space of motion assignments of the hinged panel structure $\GM({\mathcal{P}}, \p)$ to the space of stresses of the swapped
block and hole structure as a bar and joint framework at the same configuration,  $G_S(\ov{\mathcal{P}}, \p)$.
\label{cor:motionsToStressesSeparated}
\end{corollary}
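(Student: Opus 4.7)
The plan is to observe that the map constructed in the proof of Proposition \ref{prop:motionsToStressesSeparated} is in fact a linear transformation, and that injectivity has already been established there. So the work reduces to verifying linearity of each piece of the construction.

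First I would break the map into its two stages and check linearity of each. On non-block edges the assignment is simply $\lambda_{ij} = \omega^{km}$ whenever $\ijkm$ is an edge patch, and this is patently a linear map from the space of motion assignments into the space of scalar functions on the non-block edges. For two motion assignments $\omega$ and $\theta$ and scalars $\alpha, \beta$, the image of $\alpha\omega + \beta\theta$ is $\alpha\lambda_{ij} + \beta\mu_{ij}$, matching the sum of the images.

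Next I would handle the extension to block edges, which is where the real content lies. For each block $\spolyblock^i$ the scalars on the boundary edges, together with the cycle identity around the hole in $\GM(\poly, \p)$, produce an equilibrium load on the block's vertices via Equation (\ref{eqn:block2}). The map $\omega \mapsto (\text{boundary equilibrium load on } \spolyblock^i)$ is linear because both the boundary scalars and the expression $\sum \lambda_{ij}\q_i\p_j$ are linear in $\omega$. The resolution step then inverts the linear map from bar coefficients on the block to equilibrium loads on its vertices. Because the block is isostatic, this linear map is a bijection between finite-dimensional vector spaces, so its inverse is linear. Composing linear maps, the full extension is linear, and summing the two stages gives a linear map into $\mathcal S(G_S(\spoly, \p))$.

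The hard part, such as it is, is making explicit that the uniqueness of the resolution on each isostatic block is precisely what upgrades a set-theoretic extension into a linear one; without uniqueness one would only have a section rather than a well-defined linear inverse. The injectivity observation at the end of the proof of Proposition \ref{prop:motionsToStressesSeparated} (the zero motion assignment produces zero on all non-block edges and hence, by uniqueness of resolution, zero on block edges) already establishes triviality of the kernel, so no additional work is needed there. Combining linearity with injectivity yields the injective linear transformation claimed.
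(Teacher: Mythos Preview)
Your proposal is correct and follows exactly the paper's approach: the paper simply asserts ``Again, this map is a linear transformation'' immediately before stating the corollary, and you have supplied the routine verification of that assertion by checking linearity on non-block edges and then using the bijectivity of the load-to-resolution map on each isostatic block. No substantive difference.
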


\subsection{Main result and corollaries} \label{sec:mainResult}

Section~\ref{sec:stressesToMotions} completed the proof of Proposition~\ref{prop:stressesToMotionsSeparated}
 for separated block and hole polyhedra and Section~\ref{sec:motionsToStresses} completed the proof of Proposition \ref{prop:motionsToStressesSeparated}.
 The corollaries extended these results to demonstrate isomorphism of the corresponding vector spaces. 

There are some geometric corollaries, as well as some generic corollaries of these two propositions and 
Theorem~\ref{thm:mainSeparated}. 

\begin{corollary}
The vector space of stresses on a separated block and hole polyhedral framework $G_{S}(\P,\p)$ is isomorphic 
to the vector space of
motion assignments of the swapped block and hole polyhedron ${\GM}(\ov{\P},\p)$.
\label{cor:mainTheoremVS}
\end{corollary}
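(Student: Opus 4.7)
My plan is to assemble the two injective linear maps from Corollaries \ref{cor:stressesToMotionsSeparatedVS} and \ref{cor:motionsToStressesSeparated} into the desired isomorphism. First I would apply Corollary \ref{cor:stressesToMotionsSeparatedVS} directly to $\P$ to obtain an injective linear map $f : \mathcal{S}(G_{S}(\P,\p)) \to \mathcal{M}(\GM(\ov{\P},\p))$. For the reverse direction, I would observe that $\ov{\P}$ is itself separated: in this section ``separated'' requires at most one hole and at most one block at any given vertex, and swapping simply interchanges the two disjoint sets of distinguished faces, preserving both conditions. Applying Corollary \ref{cor:motionsToStressesSeparated} to $\ov{\P}$ then yields an injective linear map $g : \mathcal{M}(\GM(\ov{\P},\p)) \to \mathcal{S}(G_{S}(\ov{\ov{\P}},\p)) = \mathcal{S}(G_{S}(\P,\p))$, using $\ov{\ov{\P}}=\P$.

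Both vector spaces sit inside finite-dimensional coordinate spaces indexed by the bars of $G_{S}(\P,\p)$ and by the hinges of $\GM(\ov{\P},\p)$, respectively. Consequently, two injective linear maps running in opposite directions force equal dimensions, and each map is in fact a bijection. This yields the claimed isomorphism of vector spaces.

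For a more concrete check on the construction, one can verify that $g \circ f$ is the identity on $\mathcal{S}(G_{S}(\P,\p))$. Given a stress $\Lambda = (\lambda_{ij})$, the map $f$ records $\omega^{km}=\lambda_{ij}$ on each edge patch $\ijkm$ whose two adjacent faces in $\P$ are non-blocks, and discards the block-edge coefficients; then $g$ reads these same scalars back as stress coefficients on the corresponding non-block edges of $\P$ and extends them across each block by isostatic resolution. The induced forces at each block's boundary vertices form precisely the cut-set load that $\Lambda$ already resolved, so by uniqueness of resolution in an isostatic (hence independent) subframework the extended coefficients on block edges must coincide with the original $\lambda_{ij}$. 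The only delicate point in this approach is this uniqueness of the isostatic extension, which is supplied by the independence built into each block and invoked in Lemma \ref{lem:isostaticSubstitution}; once that is noted, the rest is a routine dimension count or direct inversion.
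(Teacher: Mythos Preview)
Your proposal is correct and follows the paper's own approach. The paper does not give a separate argument for this corollary; it simply records that Propositions~\ref{prop:stressesToMotionsSeparated} and~\ref{prop:motionsToStressesSeparated} (in their linear-map forms, Corollaries~\ref{cor:stressesToMotionsSeparatedVS} and~\ref{cor:motionsToStressesSeparated}) furnish injections in both directions, which is exactly what you assemble, and your explicit check that $g\circ f$ is the identity is more detail than the paper itself supplies.
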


\section {General Block and Hole Polyhedra} \label{sec:generalPolyhedra}

Following the proofs for separated block and hole polyhedra, in this section we extend the results to more general block and hole polyhedra that have adjacent holes or adjacent blocks. 
We assume that the configurations used in these geometric results do not have three adjacent vertices
 in any block or hole collinear, 
that is, that the configurations are in block and hole general position.  
This is not needed for separated polyhedra, where no vertex lies on more than one block or hole. 

\subsection{Gussets to create separated block and hole polyhedra} \label{sec:gussets}
In this subsection, we will introduce {\it gussets};  gussets add some edges and triangles to separate the original
block and hole polyhedron, 
while extending the stresses and motions of the original polyhedron to isomorphic spaces on the extended polyhedron.  
In addition, gussetting and swapping are commutative operations (that is, first gussetting then swapping will yield
identical results to swapping then gussetting).  
The gussetting procedure will allow us to extend Theorem~\ref{thm:mainSeparated} to general spherical block and hole polyhedra. Figure~\ref{fig:gussetNeed} provides an intuition for the gusseting procedure.

Throughout this section, we assume $\p$ is in block and hole general position for a given polyhedron $\poly$.
Thus, in the following constructions a set of three vertices $\{u_{-1}, u, u_{+1}\}$, 
where $\{u_{-1}, u\}$ and  $\{u, u_{+1}\}$ are adjacent vertices along the same block or hole,
 are not collinear. We refer to the embedded graph of the polyhedron $G(\poly, \p) = G(\p)$ 
as a block and hole polyhedron, or simply as a polyhedron.  
  
\begin{definition} {\rm Given a non-separated vertex $u$ on a hole $H$ of a block and hole polyhedron $G(\p)$ 
such that the two pairs $u_{-1}, u$, and $u, u_{+1}$ are adjacent vertices along $H$, 
a  {\it hole-gusseted polyhedron}, $G'(\p')$, is the extension of $G$ by the addition of a new 3-valent vertex $v_{u}$ which is attached by
three edges, $(v_{u},u_{-1})$, $(v_{u}, u)$, and $(v_{u},u_{+1})$,
creating the new hole $H'$.  Geometrically, we ensure that the new vertex is assigned a position $p_{v_{u}}$ in $\p'$
which is not coplanar with the three attaching
vertices. (See Figure~\ref{fig:holeGusset}.)}
\end{definition}

\begin{figure}
\begin{center}
\includegraphics[width=4in]{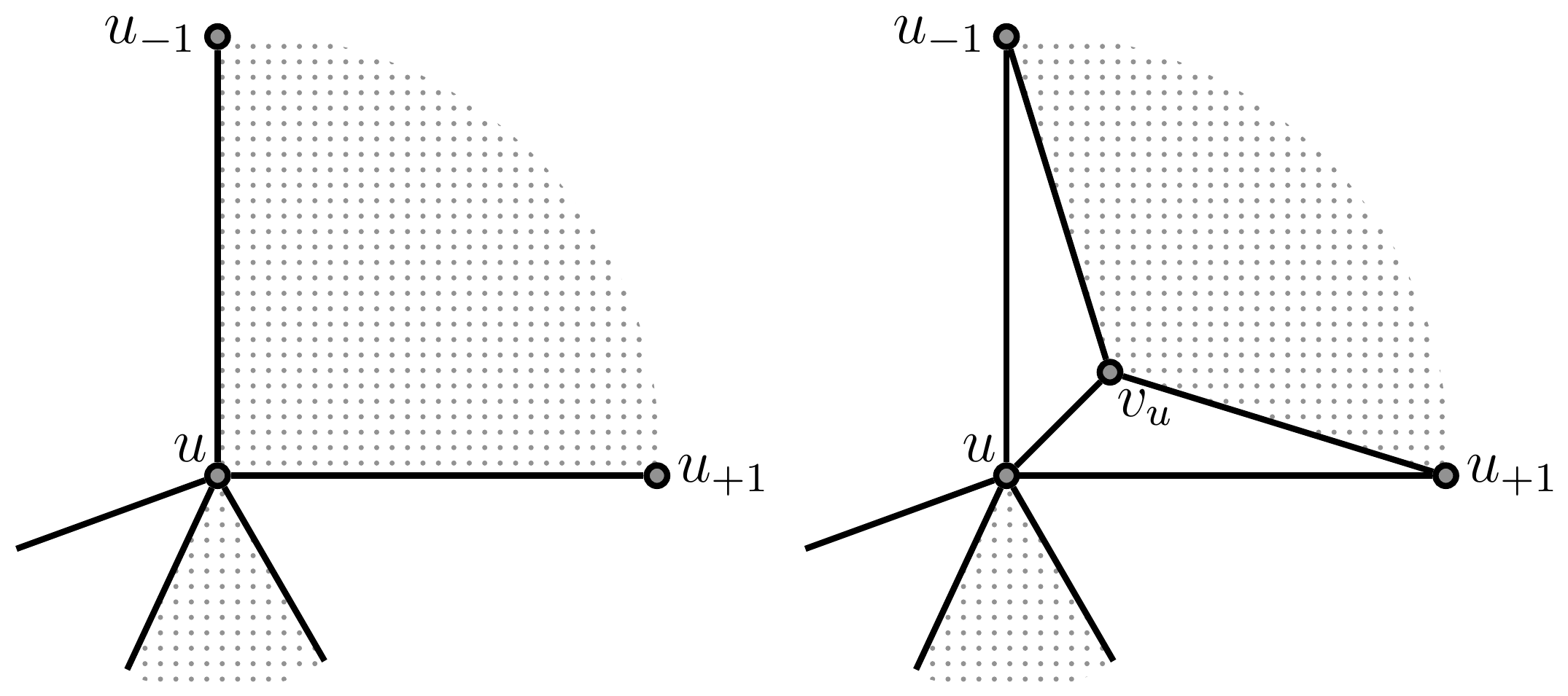} 
\caption{A non-separated hole vertex (left) becomes a non-contact vertex with the addition of a gusset (right). \label{fig:holeGusset}}
\end{center}
\end{figure}

This construction is essentially what is traditionally called 3-valent addition for bar and joint frameworks.  
However, what is different
here is that we also extend the underlying polyhedron and regard $\{u_{-1}, u, v_{u}\}$ and  
$\{v_{u},u, u_{+1}\}$ as two new triangular surface faces of the polyhedral structure.  
Notice that the addition of a gusset does not change the size of the hole, it just shifts the hole away from $u$ to $v_{u}$,
creating a new hole $H'$.  

\begin{proposition} 
Provided that $\p_{v_{u}}$ is not coplanar with 
$\p_{u_{-1}}$, $\p_u,$ and $\p_{u_{+1}}$  as we create $\p'$, 
there is a linear isomorpism between the space of stresses of $G(\p)$ and
the space of stresses of $G'(\p')$ with the added hole-gusset, and between the motion assignments
 of $G(\p)$ and
the motion assignments of $G'(\p')$ with the added hole-gusset.  
\label{prop:gussetIsomorphismHole}
\end{proposition}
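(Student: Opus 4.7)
The plan is to recognize the gusset as a Henneberg 0-extension on the bar-and-joint side $G_{S}$, and simultaneously as an insertion of two triangular bodies with three hinges all meeting at $\p_u$ on the body-hinge side $\GM$. The geometric engine is a single linear independence statement: since $u_{-1}, u, u_{+1}$ are not collinear (block-and-hole general position) and $\p_{v_u}$ is not coplanar with $\p_{u_{-1}}, \p_u, \p_{u_{+1}}$ (the hypothesis of the proposition), the three 2-extensors $\p_{v_u} \vee \p_{u_{-1}}$, $\p_{v_u} \vee \p_u$, $\p_{v_u} \vee \p_{u_{+1}}$ are linearly independent in $\PS^3$. By the same reasoning applied at $\p_u$, the three hinge 2-extensors $\p_{u_{-1}} \vee \p_u$, $\p_u \vee \p_{v_u}$, $\p_u \vee \p_{u_{+1}}$ form a basis of the 3-dimensional star of lines through $\p_u$.

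For the stress isomorphism on $G_{S}$, the gusset is literally a Henneberg 0-extension: one new vertex $v_u$ of valence three, three new bars, no removed bars. Any self-stress on $G_{S}(\poly', \p')$ must satisfy equilibrium at $v_u$, namely $\sum_{i} \lambda_i (\p_{v_u} \vee \p_{u_i}) = \0$ where the sum runs over the three neighbours, and by the independence above this forces all three new coefficients to vanish. Hence restriction to the old bars, and conversely extension by zero on the three new bars, define mutually inverse linear maps between $\mathcal{S}(G_{S}(\poly', \p'))$ and $\mathcal{S}(G_{S}(\poly, \p))$.

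For the motion-assignment isomorphism on $\GM$, I extend a motion assignment $\omega$ on $\GM(\poly, \p)$ by keeping its old hinge weights and solving for the three new hinge weights $\omega_1, \omega_2, \omega_3$ on $(u_{-1}, u)$, $(u, v_u)$, $(u, u_{+1})$ using the cycle conditions of $\GM(\poly', \p')$. Every cycle introduced by gusseting passes through the inserted path $F_1 \to \Tr_1 \to \Tr_2 \to F_2$, and closure of any such cycle reduces, modulo an old cycle already satisfied by $\omega$, to a single equation of the form $\omega_1 (\p_{u_{-1}} \vee \p_u) + \omega_3 (\p_u \vee \p_{v_u}) - \omega_2 (\p_u \vee \p_{u_{+1}}) = \SB^{F_1} - \SB^{F_2}$, which has a unique solution because the three 2-extensors on the left form a basis of the star of $\p_u$. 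Restriction to old hinges is the inverse: the old cycles of $\GM(\poly, \p)$ remain cycles of $\GM(\poly', \p')$ with unchanged conditions. The main obstacle is verifying that $\SB^{F_1} - \SB^{F_2}$ always lies in the star of $\p_u$, which is the geometric requirement that the bodies $F_1$ and $F_2$ assign a single velocity to their shared vertex $u$; I expect to obtain this either by identifying $\mathcal{M}(\GM)$ with the proxy bar-and-joint space $\mathcal{M}(G_{S})$ whose dimension is preserved by Henneberg 0-extension, or by directly exploiting the structure of the spherical polyhedron near the shared vertex in the spirit of the proof of Proposition~\ref{prop:separatedStressMotion}.
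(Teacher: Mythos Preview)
Your stress argument is correct and is precisely the paper's: the three $2$-extensors at the new $3$-valent vertex are linearly independent by the non-coplanarity hypothesis, so equilibrium at $v_u$ forces the three new coefficients to vanish, and extension-by-zero and restriction are mutually inverse linear isomorphisms between the stress spaces.

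For the motion side, the paper commits to your option (a) from the outset and never attempts the direct $\GM$ computation. Its argument is simply that $3$-valent addition in general position preserves the space of first-order motions of the underlying bar-and-joint framework $G_S$ (standard Henneberg, citing \cite{taywhiteley}), that each such motion extends uniquely by assigning the forced velocity at $\p_{v_u}$, and that this extended framework motion then induces the new motion-assignment scalars on the three new hinges.

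Your option (b) cannot succeed, and the obstacle you flag is the heart of the matter rather than a technicality. The vertex $u$ is non-separated by hypothesis --- that is the only reason a gusset is being inserted --- and this is exactly the situation of Figure~\ref{fig:gussetNeed}, where the faces $F_1$ and $F_2$ flanking the hole at $u$ may have no hinge path between them in $\GM(\poly)$ staying near $u$, or no path at all. In such cases a motion assignment on $\GM(\poly)$ imposes nothing that forces $\SB^{F_1}-\SB^{F_2}$ into the $3$-dimensional star through $\p_u$, so your extension equation is either unsolvable or (when the three new hinges form a bridge between components) solvable non-uniquely. Indeed $\mathcal{M}(\GM(\poly,\p))$ and $\mathcal{M}(\GM(\poly',\p'))$ need not have the same dimension: the whole purpose of gusseting is to \emph{change} $\GM$ so that its motion space eventually agrees with that of $G_S$. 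Appealing to the argument behind Proposition~\ref{prop:separatedStressMotion} is circular, since that proposition requires separatedness. The isomorphism asserted here is to be read at the level of $\mathcal{M}(G_S)$ (equivalently, those motion assignments arising from framework motions), and the paper's proof establishes exactly that.
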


\noindent{\bf Proof}.  The new $3$-valent vertex (not coplanar with the vertices of attachment) cannot support a non-zero stress on its edges, so the
space of stresses is unchanged \cite{taywhiteley}.  This shows the isomorphism of the stresses.

We have added a 3-valent vertex, which is independent.  Therefore, by basic operations on the underlying framework,
the space of first-order motions is also unchanged \cite{taywhiteley}.  Given any first-order motion of $G(\p)$ there is a unique velocity
assigned to $p_{v_{u,H}}$ which extends the first-order motion.   This in turn gives an extension of the motion
assignment, leaving all previous scalars unchanged and generating new motion assignment scalars for the new edges
between faces of the polyhedron: $\{u_{-1}, u\}$, $\{u, v_{u}\}$ and $ \{u, u_{+1}\}$.
Again the uniqueness gives us the one-to-one correspondence of the spaces of motion assignments.
\qed

Next, we assume that $u$ is a non-separated vertex on a block, $B$.  For defining a block gusset we have three goals:
\begin{enumerate} 
\item we should make a new block which is 
no longer directly in contact with the vertex $u$, by inserting some triangular faces and a new vertex $v_{u}$; 
\item this addition of a new vertex should be
equivalent, after swapping, to adding a gusset to the hole in the swapped structure, and
\item  the isostatic
graph used for the block should be unchanged, with the new vertex $v_{u}$ replacing
the vertex $u$.  
\end {enumerate} 
\begin{figure}
\begin{center}
\includegraphics[width=4in]{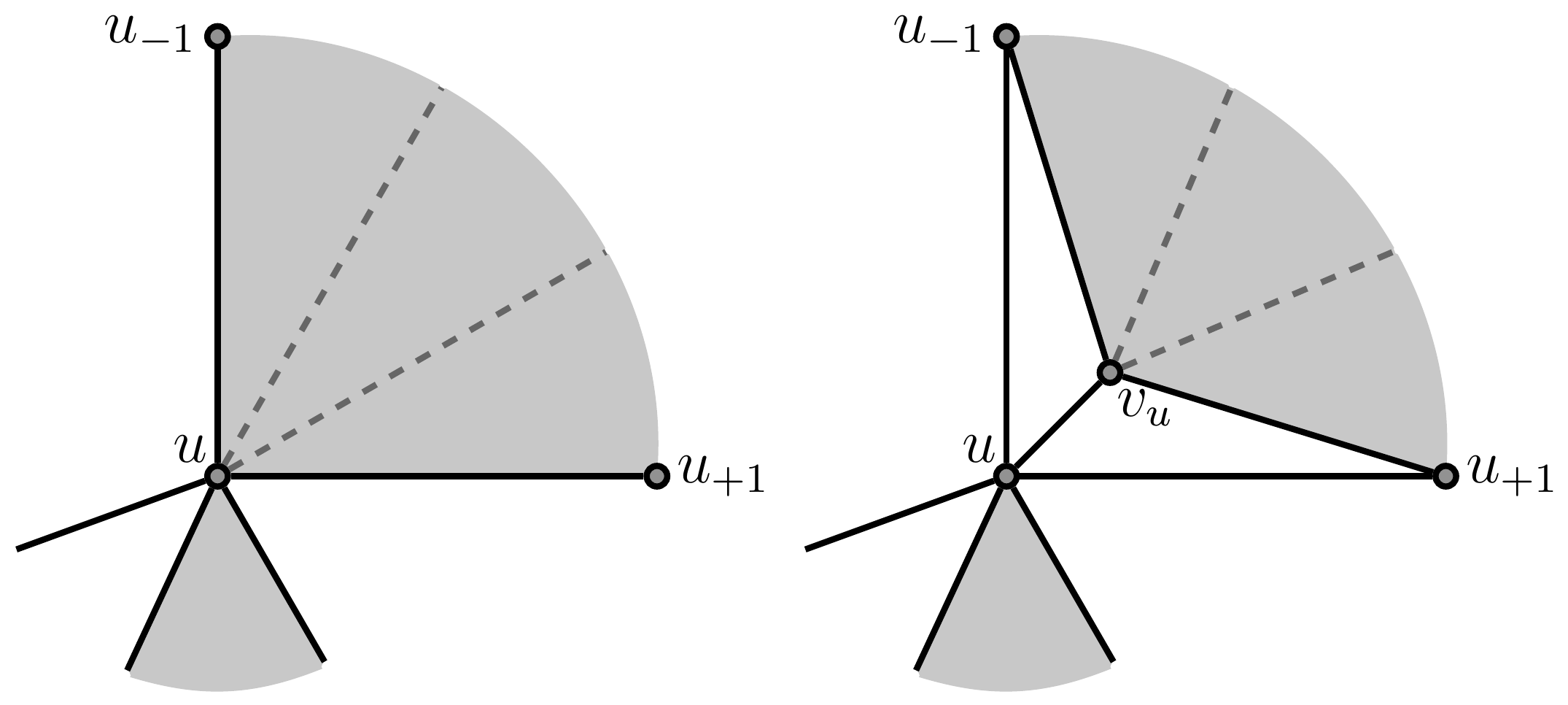} 
\caption{A gusset added at a non-separated block vertex. \label{fig:blockGusset}}
\end{center}
\end{figure}

The second objective gives us a clear picture of what we should do.  

\begin{definition}{\rm  Given a non-separated vertex $u$ on a block $B$ of a block and hole polyhedron $G(\p)$, 
the  {\it block-gusseted
polyhedron} $G'(\p')$ is the extension of $G$ by the vertex split of $u$ to $\{u,v_{u}\}$ 
along the edges $\{u_{-1}, u\}$ and $\{u, u_{+1}\}$, 
with all the block edges adjacent to $u$ in $G(\p)$ becoming adjacent to $v_{u}$ in $G'(\p')$, instead. (See Figure~\ref{fig:blockGusset}.) \\

Geometrically, we ensure that the new vertex is assigned a position $p_{v_{u}}$ in $\p'$
which keeps the isostatic block framework of $B'$ isostatic, and is also not coplanar with 
the three vertices of the split.   This is always possible, 
by general considerations of avoiding the thin set of special positions for any particular graph 
vertices; cf \cite{wwI}. We call such a position {\it general for $(u, B)$}. }
\end{definition}

\begin{proposition} 
There is an isomorphism between the space of stresses of $G(\p)$ and
the space of stresses of $G'(\p')$ with the the added block-gusset, and between the space of 
motion assignments of $G(\p)$ and
the space motion assignments of $G'(\p')$ with the the added block-gusset. 
\label{prop:gussetIsomorphismBlock}
\end{proposition}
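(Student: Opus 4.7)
Proof plan. I will treat the block-gusset as a vertex split of $u$ to $\{u,v_u\}$ inside the bar-and-joint framework $G_S(\poly,\p)$: the two polygon edges $\{u_{-1},u\}$ and $\{u,u_{+1}\}$ remain at $u$, the block-interior edges at $u$ are transferred to $v_u$, and the three new edges $\{u,v_u\}, \{v_u,u_{-1}\}, \{v_u,u_{+1}\}$ are introduced. The plan is to realise $G_S(\poly',\p')$ as an isostatic substitution applied to $G_S(\poly,\p)$, so that Corollary~\ref{cor:GeneralisostaticSubstitution} will deliver the stress isomorphism (and, via Remark 2, the isomorphism of non-trivial infinitesimal motions of the bar-and-joint framework). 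The motion-assignment statement on $\GM$ will then follow from a direct projective argument about the three new hinges meeting at $v_u$.

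For the substitution step, let $V_B$ denote the vertex set of $B$ in $\poly$ and $V_{B'}=(V_B\setminus\{u\})\cup\{v_u\}$ that of $B'$ in $\poly'$. I will define a candidate substitute subframework $\mathcal{F}^{*}$ on $V_B\cup\{v_u\}=V_{B'}\cup\{u\}$ as the isostatic block $B'$ (on $V_{B'}$) together with the vertex $u$ attached by single edges to each of $v_u, u_{-1}, u_{+1}$. The key verification is that $\mathcal{F}^{*}$ is itself isostatic: $B'$ is isostatic on $V_{B'}$ because $\p'_{v_u}$ is chosen general for $(u,B)$, and the 3-valent attachment of $u$ preserves isostaticity because the genericity assumption that $v_u$ is not coplanar with $u_{-1},u,u_{+1}$ is equivalent to saying $u$ is not coplanar with its three neighbours $u_{-1}, u_{+1}, v_u$ in $\mathcal{F}^{*}$. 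A short edge-bookkeeping then shows that replacing $B$ by $\mathcal{F}^{*}$ inside $G_S(\poly,\p)$ reproduces $G_S(\poly',\p')$ on the nose: the polygon edges $\{u_{-1},u\}$ and $\{u,u_{+1}\}$ lost with $B$ reappear through the 3-valent attachment of $u$ in $\mathcal{F}^{*}$, while the polygon edges $\{u_{-1},v_u\}, \{v_u,u_{+1}\}$ of $B'$ and the diagonal $\{u,v_u\}$ are brand new. Corollary~\ref{cor:GeneralisostaticSubstitution} then supplies the isomorphism of stress spaces and of spaces of non-trivial infinitesimal motions of the underlying frameworks.

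For the motion-assignment isomorphism on the hinged panel structures, I will argue directly. The block-gusset introduces into $\GM(\poly',\p')$ two triangular panels $T_1=\{u_{-1},u,v_u\}$ and $T_2=\{u,u_{+1},v_u\}$ together with three new hinges $\{u_{-1},v_u\}, \{u,v_u\}, \{v_u,u_{+1}\}$ that close a 3-cycle of bodies among $B', T_1, T_2$. All three hinge lines pass through $v_u$, with direction vectors $u_{-1}-v_u,\ u-v_u,\ u_{+1}-v_u$; the genericity hypothesis makes these three directions linearly independent, so the three corresponding 2-extensors are linearly independent. The cycle condition $\sum \omega^{km}\Db^{km}=\0$ around this 3-cycle therefore forces the three new scalars to vanish in every motion assignment on $\GM(\poly',\p')$, which in turn forces $T_1$ and $T_2$ to share the screw centre of $B'$. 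The map that extends a motion assignment of $\GM(\poly,\p)$ to one on $\GM(\poly',\p')$ by assigning zero to each of the three new hinges and giving $B', T_1, T_2$ the screw centre of $B$ is therefore a linear bijection, with restriction as its inverse.

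I expect the main obstacle to be the edge-bookkeeping in the substitution argument: pinning down $\mathcal{F}^{*}$ precisely enough that the substitution gives $G_S(\poly',\p')$ on the nose rather than an equivalent framework up to a further isostatic substitution inside $B'$, and checking in particular that the polygon edges of $B$ at $u$ are removed and reinserted with the right multiplicities. Once that is settled, both isomorphisms reduce to already-available tools, namely Corollary~\ref{cor:GeneralisostaticSubstitution} and the projective observation that three distinct lines through a common point in three linearly independent directions are linearly independent as 2-extensors.
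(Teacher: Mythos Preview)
Your proposal is correct. For the stress isomorphism, your approach is exactly the paper's: both recognise that the block-gusseted framework $G_S(\poly',\p')$ is obtained from $G_S(\poly,\p)$ by replacing the isostatic block framework on $B$ with your $\mathcal{F}^{*}$ (the isostatic block $B'$ on $V_{B'}$ together with the $3$-valent cone at $u$), and then invoke the General Isostatic Substitution Principle. The paper compresses this into two sentences; your edge-bookkeeping spells out the same substitution.

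For the motion-assignment isomorphism you go a little further than the paper. The paper's proof simply says ``by the Isostatic Substitution Principle, there is no change in the space of stresses or motions of the structure'', leaning on Remark~2 to cover motions of the bar-and-joint framework $G_S$. Your direct cycle argument --- the three new hinges through $v_u$ are linearly independent $2$-extensors because $v_u$ is not coplanar with $u_{-1},u,u_{+1}$, so the $3$-cycle condition on $B',T_1,T_2$ forces their scalars to zero, whence $B',T_1,T_2$ share a screw centre and the extension/restriction maps are inverse --- is correct and addresses the body-hinge structure $\GM(\poly')$ on its own terms. This is a modest but genuine refinement: the paper's one-line appeal to isostatic substitution really speaks to $\mathcal{M}(G_S)$, whereas your argument establishes the $\GM$ statement directly, which is what the downstream commutative diagram in Theorem~\ref{thm:mainGeneral} actually needs.
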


\proof
We have effectively replaced a general isostatic block framework on $B$
with a specific type of isostatic block framework in which we have added one new vertex $v_{u,B}$
and have ensured that $u$ is $3$-valent in the subframework, 
attached only to $u_{-1}$, $u_{+1}$ and $v_{u}$, and not coplanar with these points. 
By the Isostatic Substitution Principle, there
is no change in the space of stresses or motions of the structure.  
\qed
 
Now, given any vertex in contact with more than one hole or block, we insert gussets at this vertex into all but
one of
the holes and all but one of the blocks at that vertex. Repeat this for each such vertex, in sequence, and the resulting structure will
be a separated block and hole polyhedron, as shown in Figure~\ref{fig:gussetCommutative}.  

\begin{figure}
\begin{center}
\includegraphics[width=3.5in]{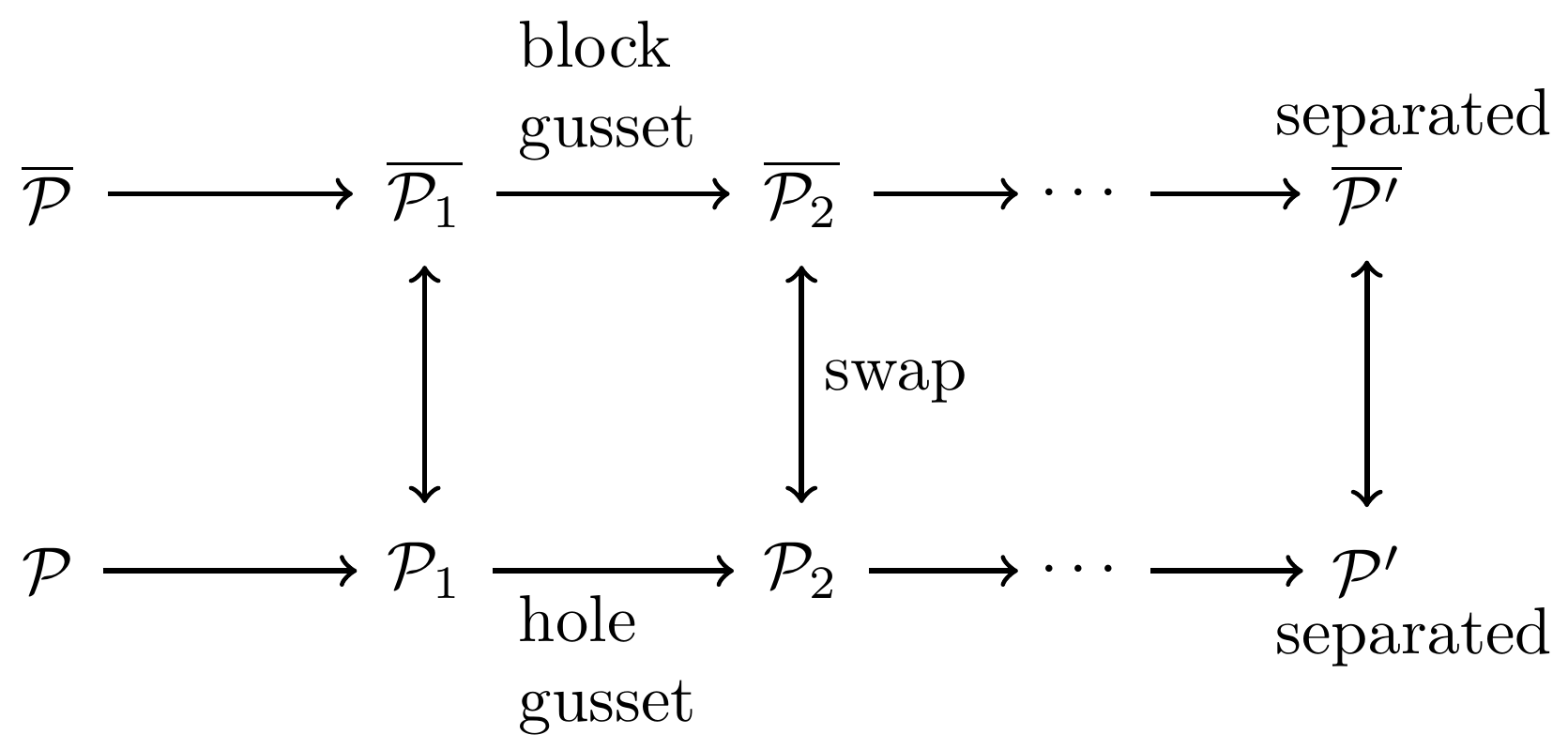} 
\caption{The gusseting and swapping processes.\label{fig:gussetCommutative}}
\end{center}
\end{figure}

\begin{lemma} Given a block and hole polyhedron $G(\P,\p)$, adding a hole-gusset at the vertex $u$ on a hole $H$ and swapping,
creates the same structure as swapping to $G({\ov \P},\p)$, and adding a block-gusset
at the vertex $u$ on the block $\ov{H}$, provided we assign the
same position to the added vertex, in creating $\p'$. 

Given a correspondence of the motion assignment on  $G'(\P',\p')$ with the stresses of $ G'(\ov{\P'},\p')$, 
there is an induced correspondence of the motion assignments on $G(\P,\p)$ 
with the stresses of $G(\ov{\P},\p)$.  
\label{lem:gussetCommutative}
\end{lemma}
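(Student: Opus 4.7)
The statement has two parts: the commutativity of gusseting and swapping, and the induced correspondence of motion assignments with stresses.

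For Part~1, I would verify the equality of the two outputs by comparing their face, edge and vertex structures directly.  In both sequences we introduce a single new vertex $v_u$ at the same chosen position, together with the three new edges $\{u_{-1},v_u\}$, $\{u,v_u\}$, $\{v_u,u_{+1}\}$ and two new triangular surface faces $\{u_{-1},u,v_u\}$ and $\{u,u_{+1},v_u\}$.  In the hole-gusset-then-swap sequence, the hole $H$ of $\P$ becomes the shifted hole $H'$ of $\P'$ with $v_u$ replacing $u$ on its boundary polygon, and swapping turns $H'$ into the block $\ov{H'}$ of $\ov{\P'}$.  In the swap-then-block-gusset sequence, the hole $H$ first swaps to the block $\ov H$, and the block-gusset then replaces $u$ on the boundary polygon by $v_u$ and reassigns all interior block edges at $u$ to $v_u$.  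Both routes leave every other face, edge, and vertex untouched, and produce block-and-hole polyhedra whose abstract spherical structures, block/hole/surface partitions, and configurations agree.  The only possible mismatch is the specific isostatic subframework installed on the newly-formed block, which by the Isostatic Substitution Principle (Lemma~\ref{lem:isostaticSubstitution} and Corollary~\ref{cor:GeneralisostaticSubstitution}) gives isomorphic spaces of stresses and motion assignments regardless of which choice is made.

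For Part~2, I would simply compose the given correspondence with the isomorphisms from Propositions~\ref{prop:gussetIsomorphismHole} and~\ref{prop:gussetIsomorphismBlock}.  The hole-gusset at $(u,H)$ provides
\begin{equation*}
\mathcal{M}(G(\P,\p)) \;\cong\; \mathcal{M}(G'(\P',\p')),
\end{equation*}
while the block-gusset at $(u,\ov H)$, whose output coincides with $G'(\ov{\P'},\p')$ by Part~1, provides
\begin{equation*}
\mathcal{S}(G(\ov{\P},\p)) \;\cong\; \mathcal{S}(G'(\ov{\P'},\p')).
\end{equation*}
Pre- and post-composing the given correspondence $\mathcal{M}(G'(\P',\p')) \to \mathcal{S}(G'(\ov{\P'},\p'))$ with these isomorphisms yields a correspondence $\mathcal{M}(G(\P,\p)) \to \mathcal{S}(G(\ov\P,\p))$, as required.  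Linearity of the isomorphisms means that injectivity or bijectivity of the original correspondence is preserved.

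\textbf{Main obstacle.} The delicate point is Part~1, because the block-gusset is described as a vertex split with reassignment of block edges while the hole-gusset is described as a $3$-valent addition; checking that these operations really produce identical abstract polyhedra (up to isostatic choice on the block) requires careful bookkeeping of which edges and faces appear on each side.  The Isostatic Substitution Principle is exactly what lets us avoid having to track the interior of the block edge by edge; without it we would only get equivalent, not identical, outputs.  The geometric genericity hypothesis, namely that $p_{v_u}$ is non-coplanar with $p_{u_{-1}}, p_u, p_{u_{+1}}$ and general for the block-gusseted block, is readily available from block and hole general position since it restricts $p_{v_u}$ to avoid only a thin algebraic set.
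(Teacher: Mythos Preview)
Your proposal is correct.  In fact the paper does not supply an explicit proof of this lemma at all: the block-gusset was \emph{defined} precisely so that goal~(2) preceding its definition---``this addition of a new vertex should be equivalent, after swapping, to adding a gusset to the hole in the swapped structure''---is satisfied, and the lemma simply records this.  Your Part~1 spells out the bookkeeping that the paper leaves to the reader, and your invocation of the Isostatic Substitution Principle to reconcile the choice of isostatic subframework on the new block is exactly the right way to handle the one ambiguity.  Your Part~2, composing with the isomorphisms of Propositions~\ref{prop:gussetIsomorphismHole} and~\ref{prop:gussetIsomorphismBlock}, is the intended mechanism and matches how the lemma is used downstream in Proposition~\ref{prop:inducedGussetedPolyhedron} and Theorem~\ref{thm:mainGeneral}.
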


Notice that we will need to pick the new point to be general for $(u, \ov{H})$, since the block puts more
restrictions on the location of the new vertex.

\begin{proposition}\label{prop:inducedGussetedPolyhedron}
Given a block and hole polyhedron $G(\P,\p)$, with $\p$ in general position for the blocks and holes, then there
is an induced 
gusseted polyhedron $G'(\P',\p')$ which is separated and has an isomorphic space of stresses and an
isomorphic space of motion assignments. 
\end{proposition}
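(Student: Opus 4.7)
The plan is to prove the statement by iteratively applying Propositions \ref{prop:gussetIsomorphismHole} and \ref{prop:gussetIsomorphismBlock} at each contact vertex until no such vertices remain, and then invoking the fact that a composition of isomorphisms is an isomorphism.

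More concretely, I would first enumerate the set $C$ of contact vertices of $G(\P,\p)$, that is, vertices which lie on two or more holes or on two or more blocks. For each $u \in C$, let $H_1, \ldots, H_s$ be the holes and $B_1, \ldots, B_t$ be the blocks at $u$. I would single out $H_1$ and $B_1$ and insert a hole-gusset at $u$ on each of $H_2,\ldots,H_s$ and a block-gusset at $u$ on each of $B_2, \ldots, B_t$, using Propositions \ref{prop:gussetIsomorphismHole} and \ref{prop:gussetIsomorphismBlock} respectively. Each gusset introduces a new $3$-valent vertex $v_u$, and it should be placed so that (i) $\p_{v_u}$ is not coplanar with the three attachment points, and (ii) in the block case, the isostatic subframework on $B$ remains isostatic after the vertex split. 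This is the condition called ``general for $(u,B)$'' in the previous subsection, and because only a measure-zero set of positions fails to satisfy these conditions, a valid $\p_{v_u}$ always exists since $\p$ is in block-and-hole general position. By Propositions \ref{prop:gussetIsomorphismHole} and \ref{prop:gussetIsomorphismBlock}, each individual gusset gives a linear isomorphism between the stress spaces and between the motion-assignment spaces before and after the operation.

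Next, I would observe that the gusseting operation, when applied at the pair $(u, H_i)$ or $(u, B_j)$, removes $u$ from $H_i$ or $B_j$ (replacing it by the new vertex $v_u$, which belongs only to $H_i'$ or $B_j'$ together with two new triangular surface faces of $\Tr$). Crucially, no new contact vertex is created: the neighbors $u_{-1}$ and $u_{+1}$ retain exactly the same hole/block incidences they already had, so their contact status is unchanged, and the newly inserted $v_u$ is $3$-valent and meets only one hole or one block. Therefore after completing all the insertions at $u$, the vertex $u$ lies on at most one hole and at most one block, while every other contact vertex either retains its earlier status (which will be resolved when we process it) or was already removed in an earlier step. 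Iterating through the finite set $C$ in any order therefore terminates in a finite number of steps and produces a polyhedron $G'(\P',\p')$ which is separated.

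The composition of the isomorphisms produced at each step yields the required isomorphisms between $\mathcal S(G_S(\P,\p))$ and $\mathcal S(G_S(\P',\p'))$, and between $\mathcal M(G^M(\P,\p))$ and $\mathcal M(G^M(\P',\p'))$. The main delicacy, and the only real obstacle, is verifying that the general-position assumption on $\p$ is rich enough to supply valid positions $\p_{v_u}$ simultaneously for all the inserted gussets; this is where one would appeal to the standard argument (as cited in \cite{wwI}) that the union of finitely many proper algebraic conditions in the parameters $\p_{v_u}$ still leaves a dense open set of admissible positions, so the inductive extension of $\p$ to $\p'$ is always possible.
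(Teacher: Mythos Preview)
Your proposal is correct and follows essentially the same approach as the paper's proof: enumerate the non-separated vertices, at each such vertex insert gussets into all but one of the holes and all but one of the blocks via Propositions~\ref{prop:gussetIsomorphismHole} and~\ref{prop:gussetIsomorphismBlock}, and compose the resulting isomorphisms. Your added observation that the gusseting step creates no new contact vertices makes the termination argument slightly more explicit than in the paper, but otherwise the structure and the key ingredients are the same.
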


\noindent{\bf Proof.} List the non-separated vertices of $G$, and for each such vertex, list the blocks and holes around
the vertex.   We proceed in turn, vertex by vertex, following this arbitrary sequence.  As we process a given vertex, we
add a gusset to all blocks and holes except the last one in the sequence at the vertex.   Given the sequence, the
gusseting is unique.  As we add new geometric points, we ensure that they are not coplanar with their attachments,
and moreover, we ensure that when added they are not collinear with any two of the other vertices of the block or
hole. As well, we ensure that they are in general position for the block in one of the swap-equivalent pairs. 

At each stage, the stresses and the motions are isomorphic to the previous polyhedron.
By induction and Propositions~\ref{prop:gussetIsomorphismBlock}, \ref{prop:gussetIsomorphismHole}, we create a final polyhedron and realization with a space of stresses and a space of motion
assignments which are isomorphic to the spaces of the original.  Since we gusseted all but one of the blocks or holes
at a non-separated vertex, this vertex is now separated in the extension, and all vertices added are also separated. 

Notice that the two gusseting processes are equivalent, under swapping (Lemma~\ref{lem:gussetCommutative}).  We conclude that, provided we use the same
sequence of non-separated vertices, and of block and hole faces at these non-separated vertices, we take a pair of swapped
block and hole polyhedra to a new pair of separated swapped block and hole polyhedra.
\qed

Gusseting is a general `trick' which can be applied whenever we wish to separate identified polygonal 
`holes' or isostatic `blocks' in polygons.  The results for a framework $G(\p)$, and its gusseted extension
$G'(\p')$ are local and do not depend on 
information about the larger scale topological patterns of the framework.  

\subsection{Main result revisited} \label{sec:mainResultRevisited}

Using Proposition~\ref{prop:inducedGussetedPolyhedron} and the techniques of Sections~\ref{sec:stressesToMotions} and \ref{sec:motionsToStresses} we can prove general case versions of Propositions~\ref{prop:stressesToMotionsSeparated} and \ref{prop:motionsToStressesSeparated}.  Together these results form the proof of the following theorem, which is a general version of Theorem~\ref{thm:mainSeparated}:

\begin{theorem} 
Given a block and hole polyhedron, $(\mathcal{P},\p)$ with $\p$ in general position, there is 
an isomorphism between
 the space of 
motion assignments of the swapped block and hole structure $\GM(\ov{\mathcal{P}},\p)$ as 
a hinged panel polyhedron and
 the space of stresses of the bar and joint framework at the same configuration, $G_S(\mathcal{P}, \p)$.
 \label{thm:mainGeneral} 
\end{theorem}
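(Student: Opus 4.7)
The plan is to prove Theorem~\ref{thm:mainGeneral} by reducing the general case to the separated case already handled in Theorem~\ref{thm:mainSeparated}, using the gusseting construction as the bridge. The chain of isomorphisms I would assemble looks like
\[
\mathcal{S}(G_S(\mathcal{P},\p))\ \cong\ \mathcal{S}(G_S(\mathcal{P}',\p'))\ \cong\ \mathcal{M}(G^M(\ov{\mathcal{P}'},\p'))\ \cong\ \mathcal{M}(G^M(\ov{\mathcal{P}},\p)),
\]
where $(\mathcal{P}',\p')$ is a fully gusseted (hence separated) extension of $(\mathcal{P},\p)$.

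First I would invoke Proposition~\ref{prop:inducedGussetedPolyhedron}: starting from the non-separated polyhedron $(\mathcal{P},\p)$ in block and hole general position, I pick any ordering of the non-separated vertices and the blocks/holes incident to each, and insert block- or hole-gussets at all but one block or hole at every non-separated vertex. By that proposition, the resulting separated polyhedron $(\mathcal{P}',\p')$ has a space of stresses on $G_S(\mathcal{P}',\p')$ that is isomorphic to $\mathcal{S}(G_S(\mathcal{P},\p))$, giving the first isomorphism in the chain. Simultaneously, I perform the \emph{same} gusseting sequence on $(\ov{\mathcal{P}},\p)$, placing the added vertex at exactly the same geometric point used for $(\mathcal{P},\p)$; by Lemma~\ref{lem:gussetCommutative}, gusseting and swapping commute, so the result is precisely $(\ov{\mathcal{P}'},\p')$. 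Proposition~\ref{prop:inducedGussetedPolyhedron} applied to the swapped side then yields the last isomorphism $\mathcal{M}(G^M(\ov{\mathcal{P}'},\p'))\cong \mathcal{M}(G^M(\ov{\mathcal{P}},\p))$.

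The middle isomorphism is then exactly Theorem~\ref{thm:mainSeparated} applied to the separated polyhedron $\mathcal{P}'$ at configuration $\p'$. Composing all three linear isomorphisms produces the required isomorphism between $\mathcal{S}(G_S(\mathcal{P},\p))$ and $\mathcal{M}(G^M(\ov{\mathcal{P}},\p))$. Since each gusseting step and the separated correspondence are linear and injective with matching dimensions, the composed map is a linear isomorphism of vector spaces, not merely a bijection.

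The main obstacle, and the only step requiring real care, is coordinating the gusseting on the two sides of the swap so that the commutativity of Lemma~\ref{lem:gussetCommutative} applies at every step. Concretely I must argue that a single geometric choice of the added vertex $\p_{v_u}$ can serve both as a hole-gusset position for $(\mathcal{P},\p)$ (non-coplanar with the three attaching vertices) and as a block-gusset position for $(\ov{\mathcal{P}},\p)$ (general for $(u,\ov H)$, preserving isostaticity of the block subframework). Because in each case the bad positions form a proper algebraic subvariety of $\mathbb{P}^3$, their union is still proper, and a generic choice avoids both; the block and hole general position assumption on $\p$ guarantees that the attaching triples of vertices are never collinear, so a valid $\p_{v_u}$ exists at every stage. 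An induction on the number of non-separated incidences, together with this genericity argument, therefore lets the two gusseting processes proceed in lockstep and makes the chain of isomorphisms well-defined.
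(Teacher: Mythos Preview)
Your proposal is correct and follows essentially the same route as the paper: gusset to a separated polyhedron via Proposition~\ref{prop:inducedGussetedPolyhedron}, apply the separated result Theorem~\ref{thm:mainSeparated} there, and use the commutativity of gusseting with swapping (Lemma~\ref{lem:gussetCommutative}) to transport the isomorphism back to the original pair. The paper's one-line proof additionally cites Proposition~\ref{prop:separatedStressMotion}, but that step is already absorbed into your direct use of Theorem~\ref{thm:mainSeparated}, which is stated in terms of $G^M$; your added discussion of choosing a single gusset position valid for both sides is exactly the care the paper packages inside Proposition~\ref{prop:inducedGussetedPolyhedron} and Lemma~\ref{lem:gussetCommutative}.
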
 

\noindent{\bf Proof.}\\

\begin{figure}[h!] 
\begin{center}
\includegraphics[width=4.5in]{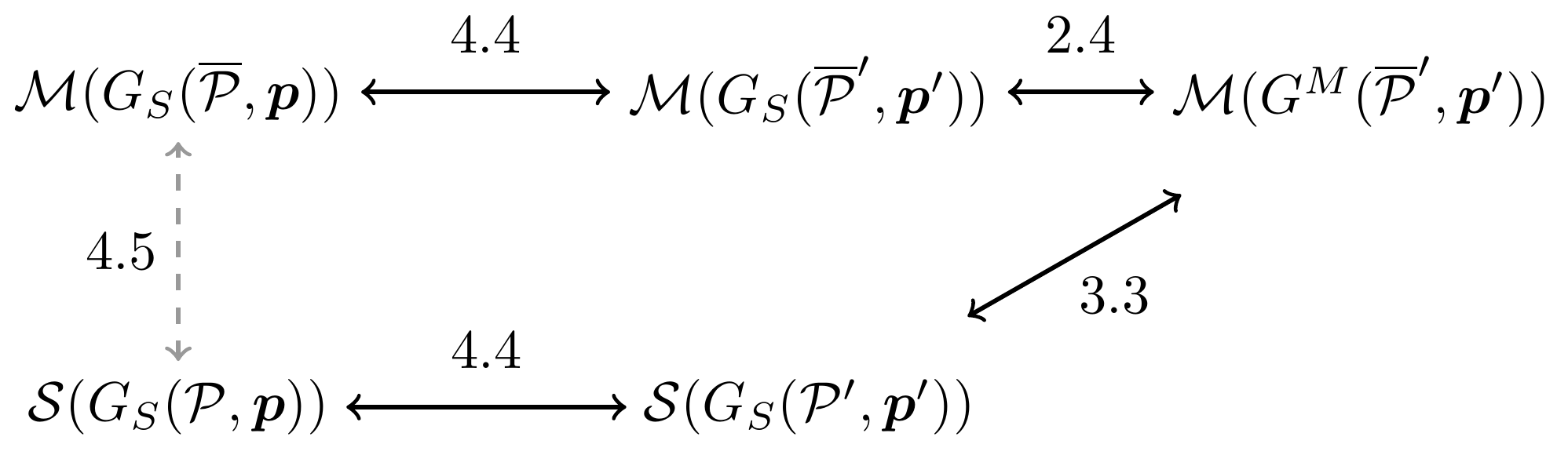} 
\end{center}
\end{figure}
Combining Proposition~\ref{prop:inducedGussetedPolyhedron} with Theorem~\ref{thm:mainSeparated} and Proposition~\ref{prop:separatedStressMotion} yields the result. 
\qed

There are some geometric and generic corollaries of these general results. %
%
%
The results can be posed in the contrapositive, since first-order rigidity is equivalent to having only the zero 
motion assignment, and independence is equivalent to having only the zero stress.  This form is 
appropriate to a number of applications. 

\begin{corollary} Given a block and hole polyhedral framework $G(\P,\p)$ and the swapped polyhedral framework
 ${G}(\ov{\P}, \p)$, 
 \begin{enumerate}
\item[i)]  if  $G_{S}(\P,\p)$ is  geometrically isostatic then  ${G_{S}}(\P,\p)$ is geometrically isostatic at the same configuration,
\item[ii)] if  ${G_{S}}(\ov{\P}, \p)$ is geometrically independent 
then  $G^{M}(\P,\p)$ is geometrically first-order rigid at 
the same configuration, and
\item[iii)] if  ${G^{M}}(\P,\p)$ is geometrically first-order rigid then  
${G_{S}}(\ov{\P},\p)$ is geometrically independent.
\end{enumerate}
\label{cor:applicationsGeometric}
\end{corollary}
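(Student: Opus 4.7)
The plan is to derive (i)--(iii) as formal consequences of Theorem~\ref{thm:mainGeneral}, which I read as the isomorphism $\mathcal S(G_S(\P,\p)) \simeq \mathcal M(G^M(\ov\P,\p))$. Since $\ov{\ov\P}=\P$, the same theorem applied with $\ov\P$ in place of $\P$ also gives $\mathcal S(G_S(\ov\P,\p)) \simeq \mathcal M(G^M(\P,\p))$. I will further need the identification of $\mathcal M(G_S(\P,\p))$ with $\mathcal M(G^M(\P,\p))$: in the separated case this is Proposition~\ref{prop:separatedStressMotion}, and in the general case it is obtained by routing through a gusseted separated extension $\P'$ via Proposition~\ref{prop:inducedGussetedPolyhedron} (which preserves stresses, bar-and-joint motions, and body-hinge motion assignments, compatibly with swapping by Lemma~\ref{lem:gussetCommutative}).

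For (iii), first-order rigidity of $G^M(\P,\p)$ means $\mathcal M(G^M(\P,\p))=0$; the swapped instance of the main isomorphism immediately gives $\mathcal S(G_S(\ov\P,\p))=0$, which is independence. Statement (ii) is the reverse reading of the same isomorphism: if $\mathcal S(G_S(\ov\P,\p))=0$ then $\mathcal M(G^M(\P,\p))=0$, i.e.\ $G^M(\P,\p)$ is first-order rigid.

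For (i), which I read as ``if $G_S(\P,\p)$ is isostatic then $G_S(\ov\P,\p)$ is isostatic'' (the printed statement appears to contain a typographical repetition of $\P$), I unpack isostatic as independent and first-order rigid and verify each half separately. Independence of $G_S(\ov\P,\p)$ follows from the vanishing of $\mathcal S(G_S(\P,\p))$ exactly as in step (iii) applied with $\ov\P$ playing the role of the original polyhedron. For the rigidity half, first-order rigidity of $G_S(\P,\p)$ gives $\mathcal M(G_S(\P,\p))=0$, which via the identification above forces $\mathcal M(G^M(\P,\p))=0$; applying the main theorem with $\P$ then yields $\mathcal M(G^M(\ov\P,\p)) \simeq \mathcal S(G_S(\P,\p))=0$, and a second application of the identification lifts this to $\mathcal M(G_S(\ov\P,\p))=0$. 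Combining the two halves, $G_S(\ov\P,\p)$ is isostatic.

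The main obstacle is purely bookkeeping: in the non-separated case the isomorphism $\mathcal M(G_S(\P,\p)) \simeq \mathcal M(G^M(\P,\p))$ is not directly available, and must be chained through the gusseted polyhedron $\P'$ as described. Once one checks that the various isomorphisms of Sections~\ref{sec:motions} and~\ref{sec:gussets} commute with swapping---essentially the content of Lemma~\ref{lem:gussetCommutative}---all three statements follow by the above diagram chases, with no further calculation.
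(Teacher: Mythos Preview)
Your approach matches the paper's: the corollary is presented there simply as the contrapositive reading of Theorem~\ref{thm:mainGeneral}, and your derivations of (ii) and (iii) are exactly that. Your routing of the non-separated case through a gusseted extension via Proposition~\ref{prop:inducedGussetedPolyhedron} and Lemma~\ref{lem:gussetCommutative} is also what the paper does (implicitly, inside the proof of Theorem~\ref{thm:mainGeneral} itself), and your reading of (i) as a typographical repetition is the only sensible one.

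There is one slip in your argument for (i). You write that independence of $G_S(\ov\P,\p)$ follows ``from the vanishing of $\mathcal S(G_S(\P,\p))$,'' but the main isomorphism sends $\mathcal S(G_S(\P,\p))=0$ to $\mathcal M(G^M(\ov\P,\p))=0$, which is rigidity of the swapped body--hinge structure, not independence of the swapped bar-and-joint framework. The two halves of ``isostatic'' swap roles under the correspondence: independence of $G_S(\ov\P,\p)$ actually comes from \emph{rigidity} of $G_S(\P,\p)$ (via $\mathcal M(G_S(\P,\p))=0\Rightarrow\mathcal M(G^M(\P,\p))=0\Rightarrow\mathcal S(G_S(\ov\P,\p))=0$), while rigidity of $G_S(\ov\P,\p)$ comes from \emph{independence} of $G_S(\P,\p)$---which is precisely what your ``rigidity half'' paragraph correctly proves. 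You have in fact computed all the needed intermediate facts (in particular $\mathcal M(G^M(\P,\p))=0$ appears in that paragraph but is then left unused), so the proof is complete once these attributions are straightened out.
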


Since any generic configuration will also be in general position for all blocks and holes,  the geometric results
immediately transfer to generic results.

\begin{corollary} Given a block and hole polyhedron $\P$ and the swapped polyhedron $\ov{\P}$, 
\begin{enumerate}
\item[i)] $G^{M}(\poly)$ is generically rigid if and only if ${G_{S}}(\ov{\P})$ is generically independent, and
\item[ii)]  $G_{S}(\poly)$ is generically isostatic if and only if  ${G^{M}}(\ov{\P})$ is generically isostatic.
\end{enumerate}
\label{cor:applicationGeneric}
\end{corollary}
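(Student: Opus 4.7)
The plan is to deduce the generic equivalences directly from their geometric counterparts in Corollary~\ref{cor:applicationsGeometric} by choosing a single suitably generic configuration and appealing to the standard open-condition nature of the rigidity properties involved. The first observation is that every generic configuration $\p$ automatically lies in block and hole general position: this condition rules out only finitely many collinearity and coplanarity coincidences among triples or quadruples of block and hole boundary vertices, each of which is the vanishing of a polynomial in the coordinates of $\p$. Consequently, block and hole general position cuts out a Zariski open and dense subset of the configuration space, and any generic $\p$ automatically lies inside this open set.

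Armed with this, I would pick such a generic $\p$, and recall that independence, first-order rigidity, and isostaticity are themselves open conditions on the configuration space, determined by the maximality of appropriate minors of the relevant rigidity matrix. Hence the generic form of each property holds if and only if it holds at some (equivalently, every) generic $\p$. For part (i), if $G^{M}(\poly)$ is generically rigid then $G^{M}(\poly,\p)$ is first-order rigid at the chosen $\p$, and Corollary~\ref{cor:applicationsGeometric}(iii) delivers geometric independence of $G_{S}(\ov{\poly},\p)$, which lifts to generic independence of $G_{S}(\ov{\poly})$; the reverse implication is identical, using Corollary~\ref{cor:applicationsGeometric}(ii). For part (ii), I would unpack ``generically isostatic'' as the conjunction of generic first-order rigidity and generic independence, apply Corollary~\ref{cor:applicationsGeometric}(i) at the chosen generic $\p$, and then combine this with the role-interchanged transfer already established in part (i) to obtain both implications between $G_{S}(\poly)$ and $G^{M}(\ov{\poly})$ being generically isostatic.

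The only real obstacle I anticipate is the routine bookkeeping verification that block and hole general position follows from genericity; everything else is an automatic transfer from geometric to generic statements via the openness of the rigidity conditions and the geometric corollary, so no new ideas beyond those developed for the geometric case are needed.
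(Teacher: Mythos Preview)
Your proposal is correct and takes essentially the same approach as the paper: the paper's entire justification is the single sentence ``Since any generic configuration will also be in general position for all blocks and holes, the geometric results immediately transfer to generic results,'' and your argument is precisely a fleshed-out version of this, spelling out why genericity implies block and hole general position and how Corollary~\ref{cor:applicationsGeometric} then yields the generic statements.
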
 

There are two special cases which are contained in this general result, and are, implicitly or explicitly, in the prior literature. 
\\

\begin{example}
An extreme form of the block and hole polyhedron will have no `surface triangles' and no holes.
The polyhedron is composed only of blocks (some of which may 
happen to be triangles):  
 $\poly = (\B_{\poly})$, while the swapped polyhedron has no blocks:  $\ov{\poly} = (\Ho_{\ov{\poly}})$.  In this setting, the 
 key observation is that: 
$$\GM(\poly)=G_{S}(\ov{\poly})=G(\poly)=G(\ov\poly).$$   
The entire polyhedron is a body and hinge spherical polyhedron with hinges $G(\poly)$, and a spherical framwork with edges 
$G(\ov\poly)=G(\poly)$
This special form of the result is stated in \cite{crapowhiteley}, 
and is used explicitly in \cite{infp1} to prove variations on Cauchy's Theorem for convex polyhedra.  
The correspondence is also implicit (in Euclidean
terms) in some 
remarks  in \cite{alexandrov}.
\end{example}

\begin{example} As an intermediate case, we can have only blocks and one hole (no indentified surface triangles):
$\poly = (\B_{\poly}, \{H\})$.  As a hinge structure, this is a disc of rigid panels (blocks), leaving the `exterior' as 
a single hole.  (See Figure~\ref{fig:panelExample}.) The swapped structure $\ov\poly = ( \{B\}, \Ho_{\spoly})$ has one block, which we often think 
of as a rigid ground, and the rest is a bar and joint framework on the edges of the polyhedron. 
 The  two connections which are at the core of Section 3 still give an isomorphism between the 
motion assignments of $\GM(\poly)$ and the self-stresses of $G_{S}(\ov\poly)$.  
Such `panel discs' are encountered implicitly in a number of 
studies such as \cite{bipartite}, as well as some recent work on structures built on quadgraphs in discrete differential geometry.  

\begin{figure}
\begin{center}
 \subfloat[]{\label{fig:panelExampleA}\includegraphics[width=1.8in]{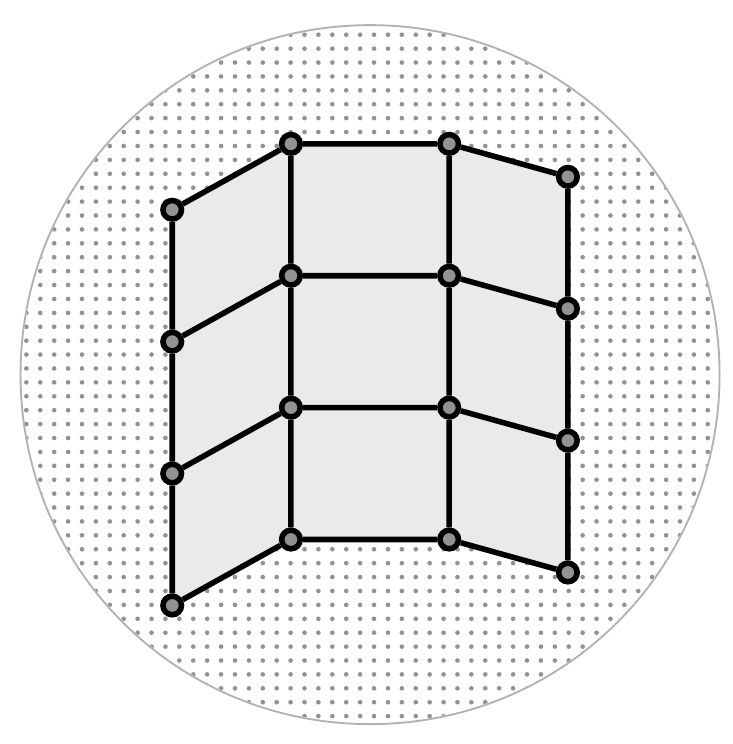}}  
  \subfloat[]{\label{fig:panelExampleB}\includegraphics[width=1.6in]{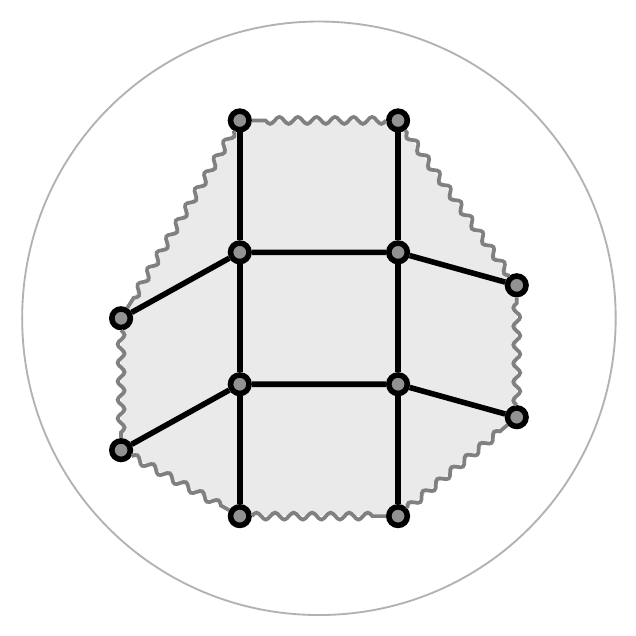}}   
 \subfloat[]{\label{fig:panelExampleC}\includegraphics[width=1.6in]{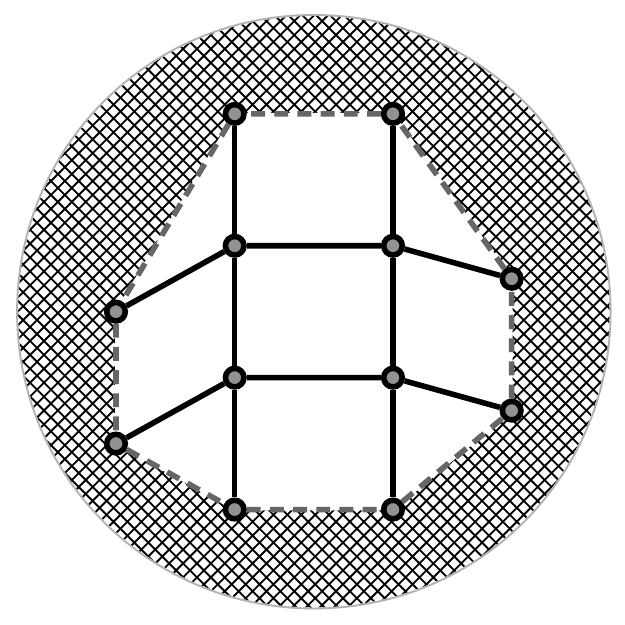}}   
 
\caption{(a) depicts a block and hole polyhedron $\poly$ consisting only of blocks. Figure (b) shows the graph $G^M(\P)$ (in which two valent vertices have been removed), and (c) depicts the graph $G^S(\spoly)$ of the swapped polyhedron. This graph can also be viewed as a pinned framework. \label{fig:panelExample} }
\end{center}
\end{figure}

\end{example}

\section {Some Extensions and Further Work} \label{sec:extensions}

\subsection{Projective interpretations} \label{sec:projective}
We have used projective coordinates for all of the geometric results in this paper. 
It is geometrically possible that that some of the `vertices' or even some of the `faces' will lie on the plane
at infinity in projective space.   These actually have valid static and kinematic interpretations, as noted
in \cite{crapowhiteley}.   For example, a joint at infinity, in our structure, will amount to a set of 
rigid pieces (triangles, blocks) which have hinges which are parallel, pointing to this infinite `vertex'.   
 A hinge line between two rigid pieces, at infinity, can be represented by a `slide hinge' between the two panels
 which leaves only the one degree of freedom between the rigid objects - that is, a translation perpendicular to all
 the planes through that line at infinity.   
 In this way, using the projective forms includes 
 some additional realizations which actually occur in mechanical engineering.

 A second general byproduct of the projective form and invariance of the statics and 
 centers of motion assignments 
 is that all of the geometric (and combinatorial) results work extend to the various metrics which can be extracted
 from the projective geometry:  Euclidean, spherical, and hyperbolic \cite{saliola-whiteley}.

\subsection{Pure conditions for isostatic block and hole polyhedra} \label{sec:pureConditions}
From earlier work of White and Whiteley \cite{wwI}, we know that any generically isostatic
graph $G$ in $3$-space has an associated {\it pure condition}  $c_{G}$, which is a polynomial in the 
(projective) coordinates of the vertices.   This polynomial represents the special positions in the 
specific sense that geometrically $G(\p)$ is isostatic if and only $c_{G}(\p) \neq 0$.  The polynomial is
defined up to multiplication by a non-zero scalar.

It was also shown that an isostatic component  $G^{*}$ of $G$, with at least four vertices, 
creates a polynomial factor of the
pure condition $c_{G}$ which is the pure condition $c_{G^{*}}$ of its subframework.  
So the pure condition of an isostatic
block and hole polyhedron $\P$ would have the form 
\begin{equation}\label{eqn:purecondition}
c_{G(\P)}  =  T(\P) \prod_{B_i \in \B_{\poly}}(c_{B_{i}})  
\end{equation}

for some {\it surface polynomial } $T(\P)$ which does not depend on the specific isostatic framework 
inserted into the
blocks. 

Similarly, for the swapped block and hole polyhedron $\ov{\P}$, which is also isostatic, we have 
\begin{equation}\label{eqn:purecondition2}
c_{G(\ov{\P})}  =  T(\ov{\P}) \prod_{H_i\in \Ho_{\spoly}}(c_{H_{i}}) 
\end{equation}
where $c_{H_{i}}$ represents the pure condition of the isostatic framework inserted into the dual blocks
$\ov{B}_{i} = H_{i}$.   

If we happen to change the isostatic subframework for the  block
$B_{i}$, its factor will change, but $T(\ov{\P}) $ will not, even if we add additional vertices inside the
block.   
\\

\begin{example}  Consider the elementary example $\mathcal{C}$ in Figure~\ref{fig:polyhedraA},
in which the block is labelled $v_{1} v_{2}v_{3} v_{4}$ and  the hole is labeled $v_{5} v_{6}v_{7}
v_{8}$.  For simplicity in the following formulas, we re-label the hole vertices as follows: $v_{5}=u_{1}, v_{6}=u_{2}, v_{7}=u_{3},v_{8}=u_{4}$.    
Writing $[\boldsymbol{abcd}]$ for the determinant of the $4\times 4$ matrix of projective
coordinates of the four points $\boldsymbol{a,b,c,d}$, the pure condition has the
form: 
$$
c_{\mathcal{C}} = [\boldsymbol{v_{1}v_{2}v_{3}v_{4}}]
\big(\prod_{i=1 .. 4}[v_{i}v_{i+1}u_{i}u_{i+1}] -\prod_{i=1 .. 4}[v_{i}v_{i-1}u_{i}u_{i+1}]\big)
$$
By convention, we are cycling the indexes so $v_{i+1}= v_{1}$ when $i=4$.  Notice the factor $[v_{1} v_{2}v_{3} v_{4}]$, for that block, which says that the complete graph $K_{4}$ is isostatic unless the four points are coplanar. 
If we consider the swapped block and hole polyhedron, with the block at $v_{5}v_{6}v_{7}v_{8}$, then
(by symmetry)
$$
c_{\mathcal{\ov{C}}} = [\boldsymbol{v_{5} v_{6}a_{7} v_{8}}]
\big(\prod_{i=1 .. 4}[v_{i}v_{i+1}u_{i}u_{i+1}] -\prod_{i=1 .. 4}[v_{i}v_{i-1}u_{i}u_{i+1}]\big)
$$
In particular, we see $T(\mathcal{C}) = T(\ov{\mathcal{C}})$.  
\end{example}

We believe this is typical of
conditions for isostatic block and hole polyhedra and their swapped polyhedra. 
>From our results here, we do know that any configuration $\p$ for which $(\P,\p)$ has a self-stress 
will also provide a self stress on $(\ov{\P}, \p)$ a self-stress.  Equivalently, 
$T(\P,\p) = 0$ if and only if $T(\ov{\P},\p) = 0$,  over all configurations $\p$
 of points in the real projective space.  We
have a stronger conjecture which is compatable with this observation. 

\begin{conjecture}  Given a generically isostatic block and hole polyhedron $\P$,
the surface polynomial of $\P$ is the same as the surface polynomial of the swapped polyhedron
$T(\P) = T(\ov{\P})$
\end{conjecture}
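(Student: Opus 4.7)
The plan begins from the observation (noted just before the conjecture) that $T(\mathcal{P})(\p)=0$ if and only if $T(\ov{\mathcal{P}})(\p)=0$ for every configuration $\p$. This is a direct consequence of Corollary~\ref{cor:applicationsGeometric} applied at configurations in which all the block and hole factors in (\ref{eqn:purecondition}) and (\ref{eqn:purecondition2}) are nonzero, combined with Zariski closure. Thus $T(\mathcal{P})$ and $T(\ov{\mathcal{P}})$ already define the same projective subvariety of the configuration space, and the real task is to promote this to equality as polynomials up to a nonzero scalar.

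First, I would establish a degree match. The surface polynomial $T(\mathcal{P})$ is (up to a scalar) the bracket polynomial of the rigidity-matrix minor indexed by the non-block bars of $G_S(\mathcal{P})$: namely the edges of $\Su_{\poly}$ together with the boundary edges of blocks and holes. Swapping preserves this edge set — only the designation of which boundary polygons enclose blocks and which enclose holes changes — so a standard Laplace-type count of rows should give $\deg T(\mathcal{P}) = \deg T(\ov{\mathcal{P}})$. Combined with equality of zero sets, $T(\mathcal{P})$ and $T(\ov{\mathcal{P}})$ then share the same reduced zero locus, and the remaining task is to match multiplicities along each irreducible component.

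The main obstacle is this multiplicity step. One natural strategy is to use the vector space isomorphism of Theorem~\ref{thm:mainSeparated}, extended to general polyhedra via the swap-commutative gusseting of Section~\ref{sec:gussets} (Lemma~\ref{lem:gussetCommutative}), to assemble families of linear maps over the configuration space whose corank jumps record exactly the orders of vanishing of $T(\mathcal{P})$ and $T(\ov{\mathcal{P}})$. The maps constructed in Propositions~\ref{prop:stressesToMotionsSeparated} and \ref{prop:motionsToStressesSeparated} are in fact built from the polynomial $2$-extensors $\p_i\p_j$ and $\Db^{km}$, so they vary polynomially in $\p$; showing that they assemble into a morphism of sheaves over configuration space whose fiberwise corank jumps match on the two sides would force matching multiplicities.

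An alternative route is inductive. Using gusseting to reduce to separated polyhedra, and then inducting on the number of blocks with the single-block/single-hole tower (as in the example $\mathcal{C}$) as the base case, one could attempt an explicit Grassmann--Cayley expansion of $T(\mathcal{P})$ that manifestly displays swap-symmetry in bracket form, paralleling the symmetric bracket polynomial displayed for $\mathcal{C}$. The hard part in either approach is the passage from a pointwise isomorphism of stress and motion kernels — which is all the main theorem guarantees — to a global polynomial identity; this is why the statement remains a conjecture and not a corollary of the results already proved.
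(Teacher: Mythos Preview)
This statement is a \emph{conjecture} in the paper, not a theorem: the paper offers no proof, only the supporting example of the cube $\mathcal{C}$ and the observation (which you correctly reproduce) that the main results force $T(\mathcal{P})$ and $T(\ov{\mathcal{P}})$ to share the same real zero set. So there is nothing to compare your proposal against; your write-up is a research outline for attacking an open problem, and you yourself flag the gap at the end.

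That said, two comments on the outline itself. First, your characterisation of $T(\mathcal{P})$ as ``the bracket polynomial of the rigidity-matrix minor indexed by the non-block bars'' is not accurate: $T(\mathcal{P})$ is defined only as the residual factor of the full pure condition $c_{G_S(\mathcal{P})}$ after dividing out the block factors $\prod c_{B_i}$. It is not a priori a minor of anything, and the non-block edge set alone does not determine it (the block boundary polygons participate in both the block factors and the surface interactions). A degree match is plausible but must be argued via the isostatic count $\sum_i(|B_i|-3)=\sum_j(|H_j|-3)$ and the known degree formula for pure conditions, not via the shortcut you describe. Second, equality of real zero sets, even together with a degree match, does not force equality of polynomials up to scalar: one needs control over complex zeros or an irreducibility/multiplicity argument, exactly the obstruction you identify. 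Your sheaf-theoretic suggestion is reasonable in spirit but would require showing that the stress-to-motion map of Theorem~\ref{thm:mainSeparated} is not merely a fiberwise bijection but comes from a map of vector bundles whose determinant line recovers $T$; the paper's constructions do not provide this, and establishing it is essentially the content of the conjecture.
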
\label{conj:purecondition}

Even adding a gusset to  $\P$ to create $\P'$ will show up as factoring in these pure conditions. 
The  $3$-valent insertion at a hole of $\P$, (or onto the modified block in $\P'$) leaves the
residual framework isostatic, so this gusset creates a small factor $[u_{i-1}u_{i}u_{i+1}v_{i}]$ 
where $u_{i-1}u_{i}u_{i+1}v_{i} $ are the four points that make up
the gusset.  When we compare the surface polynomials of $\P$ and $\P'$, we see that
$T(\P') = [\boldsymbol{u_{i-1}u_{i}u_{i+1}v_{u_{i}}}] T(\P)$ if the gusset was added at a hole. For a block gusset, one has this factor as well an additional substitution of $v_{{i}}$ in place of $u_{i}$, in the block polynomial factor.   In particular, these steps make the same change to the surface polynomials $T(\P)$ and $T(\ov{\P})$.  That is, they will be the same after the addition of a gusset if and only if they were the same polynomial before. 

More generally, we anticipate that the pure conditions of block and hole polyhedra will have 
 nice algebraic geometric
forms which would be interesting to explore.   For example, if a block and hole polyhedron has four quadrilateral faces
 $F^{1}, F^{2}, F^{3}, F^{4}$ which are blocks or holes, 
 and if all choices of two of these for blocks and two of these
 for holes generated a generically isostatic block and hole polyhedron, then we can ask whether
 there is a single `surface condition' which is shared by all six possible choices?  This type of question
 is a subject for further research.

\subsection{More general spheres} \label{sec:moreGeneralSpheres}

So far we have assumed the combinatorial structure was a 3-connected sphere.  
The actual proofs do not require the $3$-connectivity, just the topology of a spherical polyhedron. 
The results would work perfectly well for $2$-connected planar graphs which have identified holes, blocks and
 `surface faces'.  One point requiring some care for 2-connected  polyhedra is that when the surface faces are triangulated, we do not
 accidently insert the same pair of vertices as an `edge' in two different faces, making $G^{M}(\P)$ a multi-graph.   
  In order not to revert to a $3$-connected graph when doing this triangulation, some of the 
  faces at the $2$-disconnection will need to be holes or blocks.   On the other hand,
  by inserting gussets into blocks and holes, one can convert to structure
  into an equivalent (for statics and kinematics) $3$-connected sphere.
  
   A second point is the possibility of two faces sharing two edges.  If these are both blocks, 
 we will have two dual hinges joining the two blocks, so $G^{M}(\P)$ is no longer a graph and the
 two blocks are locked as one rigid object.  
 
    Under these small changes, 
   the results will generalize to  block and hole polyhedra on $2$-connected spheres.  

\subsection{Surfaces with other toplogies}\label{sec:OtherTopologies}

All of these results were written with the assumption that we started with a spherical polyhedron.
The key to these proofs is actually broader than
just the spherical polyhedral topology.

In Proposition~\ref{prop:motionsToStressesSeparated}, the proof from the motion assignment to the self-stress relied centrally on the fact that at each edge there were two faces of the original structure (including the blocks and holes) and that at each `interior' vertex there was a face-edge cycle, as well as a cycle surrounding each hole-face.  These conditions are 
closely related to the definition of an oriented surface as a complex of faces, edges and vertices.
They are also sufficient to generate a self-stress from the motion assignment.  So  
Proposition~\ref{prop:motionsToStressesSeparated} extends to generalized block and hole 
polyhedra on arbitrary closed oriented polyhedral surfaces.  

If we reread the proof of  Proposition~\ref{prop:motionsToStressesSeparated}, it becomes clear that what was used was that there was an underlying surface (faces, vertices, edges) and that each vertex was surrounded by an face-edge cycle.  So the result transfers to surfaces of any genus.  A motion assignment on a generalized block and hole surface becomes a self-stress of the swapped structure as a bar and joint framework. 

The extensions using gussets (Propositions~\ref{prop:gussetIsomorphismHole},\ref{prop:gussetIsomorphismBlock} and Lemma~\ref{lem:gussetCommutative}) also apply to these generalized block and hole polyhedra, giving isomorphic spaces before and after adding gussets both for stresses and for motions.   This extends the transformation to generalized  block and hole polyhedra on closed oriented polyhedral surfaces.

For other oriented surfaces beyond the sphere, the converse Proposition~\ref{prop:stressesToMotionsSeparated} does not extend.  For example, a triangulated torus has  $|E|=3|V|$, and the are always at least a six dimensional space of self-stresses, including for the space of generic realizations which are first-order rigid. 
For a torus, any vertex-face cycle which is not homologous to zero will not be guaranteed to have
a zero sum in the self-stress, since it is not a cut set.  However, in a motion assignment, this sum on this cycle must be zero. In a general torus, there are two such cycles which are not
homologous, and these generate all the cycles.   If the sum of the self-stress around each of these cycles is also zero, then the self-stress will induce a motion assignment.   This cycle subspace of self-stresses is isomorphic to the space of motion assignments.   Moving to block and hole polyhedra simply extends this problem with some additional complexity.

\subsection{Polarity and swapping}

There is a theory which explores the rigidity of polar structures for bar and joint structures in $3$-space, also called {\it sheet structures} \cite{sheet}.  In this transformation, plane-isostatic faces (including triangles) go to vertices, edges go to edges and vertices go to plane isostatic frameworks on the plane containing the polar edges (also called {\it sheets}).  The net result is a polar structure which can be realized as a bar and joint framework.  This is a polar transformation which applies to arbitrary bar and joint frameworks in 3-space, preserving static and infinitesimal rigidity (and therefore taking isostatic frameworks  to isostatic sheet structures).   This is a geometric transformation, not just a combinatorial process.  

We will not give the details here, but this polarity has a direct application to block and hole polyhedra and the swapping we have examined here. When this polarity is applied to a polyhedron with isostatic faces, it produces the dual polyhedron with isostatic faces \cite{sheet}.  When the polarity is applied to a rigid block, it produces a dual rigid block.   Overall, when the polarity is applied to a block and hole polyhedron, it will produce a dual block and hole polyhedron.  When it is applied to the swapped polyhedron, it will produce the swapped polyhedron of the dual block and hole polyhedron. 



\begin{thebibliography}{10}
\bibitem{alexandrov}  {\sc  A.D. Alexandrov}:
{\bf Convex polyhedra}, GTI, Moscow, 1950. English translation: Springer, Berlin, 2005.

\bibitem{cauchy}  {\sc  A.L. Cauchy}:  A.L. Cauchy,
Recherche sur les polydres - premier m\'emoire,
{\it  Journal de l'Ecole Polytechnique} {\bf 9} (1813), 66Ð86.

\bibitem{crapowhiteley}  {\sc Henry Crapo and Walter Whiteley}:
{Statics of frameworks and motions of panel structures: a projective geometric
introduction}, {\it Structural Topology} {\bf 1} (1982), pp~43-82.

\bibitem{dehn}  {\sc  M. Dehn}:
\"Uber die Starreit konvexer Polyeder, (in German), {\it Math. Ann. }{\bf 77} (1916),
466-473. 

\bibitem{wfsw}  {\sc Wendy Finbow-Singh and Walter Whiteley}:
{Isostatic Almost Spherical Frameworks via Disc Decomposition},
 preprint, York University, 2007

\bibitem{gluck}  {\sc Herman  Gluck}:
{\it  Almost all simply connected closed surfaces are rigid}
 Lecture Notes in Math. {\bf 438}, Geometric Topology, Springer-Verlag (1975), 225-239.

\bibitem{graverCounting}  {\sc Jack Graver}:
{\bf Counting on Frameworks}
Dolciani Mathematical Expositions, MAA, Washington. 2001.

\bibitem{graver}  {\sc Jack Graver, Brigitte Servatius,  and Hermann Servatius}:
{\bf Combinatorial Rigidity}
Graduate Studies in Mathematics, AMS, Providence. 1993.


\bibitem{saliola-whiteley}  {\sc Franco Saliola and Walter Whiteley}:
{Equivalence of First-order Rigidity for Euclidean, Spherical and Hyperbolic Metrics},
Preprint, York University, 2004.


\bibitem{taywhiteley}  {\sc Tiong-Seng Tay and Walter Whiteley}:
{ Generating isostatic frameworks},
 {\it Structural Topology} {\bf 11} (1985), pp~20-69.

\bibitem{whitehandbook}{\sc N. White}: Geometric applications of the Grassmann-Cayley algebra, 
Chapter 59 in  {\bf Handbook of Discrete and Computational Geometry}, J. Goodman and J. O'Rourke (eds),
 CRC Press LLC, Boca Raton, FL; Second Edition,  (2004).

\bibitem{whiteGrassmann} {\sc N. White}: {A tutorial on Grassmann-Cayley algebra}
   In {\bf Invariant Methods
    in Discrete and Computational Geometry}, N. White, Ed. Kluwer Academic Publisher,
    (1995), pp. 93--106.
 
\bibitem{wwI} {\sc Neil L. White and Walter Whiteley}:
{The Algebraic Geometry of Stresses in Frameworks.} {\it SIAM Journal on
Algebraic and Discrete Methods}{ \bf4}  (1983), 481-511.


\bibitem{handbook}{\sc W. Whiteley}:
{Rigidity and Scene Analysis};  in  {\bf Handbook of Discrete and Computational Geometry,} 
J. Goodman and J. O'Rourke (eds.), (second edition) (2004), 1327-1354.

\bibitem{wchapter} {\sc W. Whiteley}: { Matroids from discrete geometry},
in {\bf Matroid Theory},
J. E. Bonin, J. G. Oxley, and B. Servatius, Eds. American Mathematical
Society, Contemporary Mathematics, 1996, vol. 197, pp~171-313.

\bibitem{sheet} {\sc W. Whiteley}: {Rigidity and polarity I: statics of sheetworks};  {\it Geometriae Dedicata} {\bf 22} (1987), 329-362

\bibitem{infp2} {\sc W. Whiteley}: {Infinitesimally rigid polyhedra II: modified spherical frameowrks},
{\it Trans. A.M.S.} {\bf 306} (1988), 115-139.

\bibitem{bipartite} {\sc W. Whiteley}: {Infinitesimal motions of a bipartite framework}  
{\it Pac. J. Math.} 110 (1984), 233-255.

\bibitem{infp1} {\sc W. Whiteley}: { Infinitesimally rigid polyhedra I: statics of frameworks},
{\it Trans. A.M.S.}, {\bf 285} (1984), 431-465.  


\end{thebibliography}
\end{document}